\documentclass[12pt]{article}

\pdfoutput=1

\addtolength{\oddsidemargin}{-1cm}
\addtolength{\evensidemargin}{-1cm}
\addtolength{\textwidth}{2cm}
\addtolength{\textheight}{4cm}
\addtolength{\topmargin}{-3cm}
%
%
%
%
%
%
%


%
%
%
%
%

\usepackage{amsmath}
\usepackage{amsthm}
\usepackage{amssymb}
\usepackage{enumerate}
\usepackage{cases}
\usepackage{multirow}
\usepackage{subfigure}
\usepackage{graphicx}
\usepackage{appendix}
\usepackage{pgf,tikz}
\usetikzlibrary{arrows}
\graphicspath{{./}}
\DeclareMathOperator{\sign}{sgn}

\DeclareMathOperator{\essinf}{ess\,inf}
\begin{document}
\newcommand{\done}[2]{\dfrac{d {#1}}{d {#2}}}
\newcommand{\donet}[2]{\frac{d {#1}}{d {#2}}}
\newcommand{\pdone}[2]{\dfrac{\partial {#1}}{\partial {#2}}}
\newcommand{\pdonet}[2]{\frac{\partial {#1}}{\partial {#2}}}
\newcommand{\pdonetext}[2]{\partial {#1}/\partial {#2}}
\newcommand{\pdtwo}[2]{\dfrac{\partial^2 {#1}}{\partial {#2}^2}}
\newcommand{\pdtwot}[2]{\frac{\partial^2 {#1}}{\partial {#2}^2}}
\newcommand{\pdtwomix}[3]{\dfrac{\partial^2 {#1}}{\partial {#2}\partial {#3}}}
\newcommand{\pdtwomixt}[3]{\frac{\partial^2 {#1}}{\partial {#2}\partial {#3}}}
\newcommand{\bs}[1]{\mathbf{#1}}
\newcommand{\bx}{\mathbf{x}}
\newcommand{\by}{\mathbf{y}}
\newcommand{\bz}{\mathbf{z}}
\newcommand{\bd}{\mathbf{d}} 
\newcommand{\bn}{\mathbf{n}} 
\newcommand{\bP}{\mathbf{P}} 
\newcommand{\bR}{\mathbf{R}} 
\newcommand{\bS}{\mathbf{S}} 
\newcommand{\bT}{\mathbf{T}} 
\newcommand{\bp}{\mathbf{p}} 
\newcommand{\ol}[1]{\overline{#1}}
\newcommand{\rf}[1]{\eqref{#1}}
\newcommand{\xt}{\mathbf{x},t}
\newcommand{\hs}[1]{\hspace{#1mm}}
\newcommand{\vs}[1]{\vspace{#1mm}}
\newcommand{\eps}{\varepsilon}
\newcommand{\ord}[1]{\mathcal{O}\left(#1\right)} 
\newcommand{\oord}[1]{o\left(#1\right)}
\newcommand{\Ord}[1]{\Theta\left(#1\right)}
\newcommand{\PhiF}{\Phi_{\rm freq}}
\newcommand{\real}[1]{{\rm Re}\left[#1\right]} 
\newcommand{\im}[1]{{\rm Im}\left[#1\right]}
\newcommand{\hsnorm}[1]{||#1||_{H^{s}(\bs{R})}}
\newcommand{\hnorm}[1]{||#1||_{\tilde{H}^{-1/2}((0,1))}}
\newcommand{\norm}[2]{\left\|#1\right\|_{#2}}
\newcommand{\normt}[2]{\|#1\|_{#2}}
\newcommand{\on}[1]{\Vert{#1} \Vert_{1}}
\newcommand{\tn}[1]{\Vert{#1} \Vert_{2}}
\newcommand{\ts}{\tilde{s}}
\newcommand{\darg}[1]{\left|{\rm arg}\left[ #1 \right]\right|}
\newcommand{\bnabla}{\boldsymbol{\nabla}}
\newcommand{\dive}{\boldsymbol{\nabla}\cdot}
\newcommand{\curl}{\boldsymbol{\nabla}\times}
\newcommand{\Phixy}{\Phi(\bx,\by)}
\newcommand{\PhiOxy}{\Phi_0(\bx,\by)}
\newcommand{\dxPhixy}{\pdone{\Phi}{n(\bx)}(\bx,\by)}
\newcommand{\dyPhixy}{\pdone{\Phi}{n(\by)}(\bx,\by)}
\newcommand{\dxPhiOxy}{\pdone{\Phi_0}{n(\bx)}(\bx,\by)}
\newcommand{\dyPhiOxy}{\pdone{\Phi_0}{n(\by)}(\bx,\by)}

\newcommand{\rd}{\mathrm{d}}
\newcommand{\R}{\mathbb{R}}
\newcommand{\N}{\mathbb{N}}
\newcommand{\Z}{\mathbb{Z}}
\newcommand{\C}{\mathbb{C}}
\newcommand{\K}{{\mathbb{K}}}
\newcommand{\ri}{{\mathrm{i}}}
\newcommand{\re}{{\mathrm{e}}} 

\newcommand{\cA}{\mathcal{A}}
\newcommand{\cC}{\mathcal{C}}
\newcommand{\cS}{\mathcal{S}}
\newcommand{\cD}{\mathcal{D}}
\newcommand{\cone}{{c_{j}^\pm}}
\newcommand{\ctwo}{{c_{2,j}^\pm}}
\newcommand{\cthree}{{c_{3,j}^\pm}}

\newtheorem{thm}{Theorem}[section]
\newtheorem{lem}[thm]{Lemma}
\newtheorem{defn}[thm]{Definition}
\newtheorem{prop}[thm]{Proposition}
\newtheorem{cor}[thm]{Corollary}
\newtheorem{rem}[thm]{Remark}
\newtheorem{conj}[thm]{Conjecture}
\newtheorem{ass}[thm]{Assumption}
\newcommand{\JC}{J_{\rm C}}
\newcommand{\JI}{J_{\rm NCI}}
\newcommand{\JII}{J_{\rm NCII}}
\newcommand{\bQ}{\mathbf{Q}}
\newcommand{\uM}{M(u)}
\newcommand{\tW}{W^s}
\newcommand{\sW}{W^i}
\definecolor{cqcqcq}{rgb}{0.9,0.9,0.9}
\title{A high frequency boundary element method for scattering by a class of nonconvex obstacles}
%
%
%
\author{S.\ N.\ Chandler-Wilde\footnotemark[1], D.\ P.\ Hewett\footnotemark[1] \footnotemark[2], S.\ Langdon\footnotemark[1], A.\ Twigger\footnotemark[1]}

\renewcommand{\thefootnote}{\fnsymbol{footnote}}
\footnotetext[1]{Department of Mathematics and Statistics, University of Reading, Whiteknights PO Box 220, Reading RG6 6AX, UK. This work was supported by EPSRC grant EP/F067798/1.}

\footnotetext[2]{Current address: Mathematical Institute, University of Oxford, Radcliffe Observatory Quarter, Woodstock Road, Oxford, OX2 6GG, UK. Email: \texttt{hewett@maths.ox.ac.uk}}

\maketitle
\renewcommand{\thefootnote}{\arabic{footnote}}
\begin{abstract}
In this paper we propose and analyse a hybrid numerical-asymptotic boundary element method for the solution of problems of high frequency acoustic scattering by a class of sound-soft nonconvex polygons.  The approximation space is enriched with carefully chosen oscillatory basis functions; these are selected via a study of the high frequency asymptotic behaviour of the solution. We demonstrate via a rigorous error analysis, supported by numerical examples, that to achieve any desired accuracy it is sufficient for the number of degrees of freedom to grow only in proportion to the logarithm of the frequency as the frequency increases, in contrast to the at least linear growth required by conventional methods.
This appears to be the first such numerical analysis result for any problem of scattering by a nonconvex obstacle.
Our analysis is based on new frequency-explicit bounds on the normal derivative of the solution on the boundary and on its analytic continuation into the complex plane.
%
\end{abstract}
\section{Introduction}
\label{Introduction}

There has been considerable interest in recent years in the development of numerical methods for time harmonic acoustic and electromagnetic scattering problems that can efficiently resolve the scattered field at high frequencies.  Standard finite or boundary element methods, with piecewise polynomial approximation spaces, suffer from the restriction that a fixed number of degrees of freedom is required per wavelength in order to represent the oscillatory solution, leading to excessive computational cost when the scatterer is large compared to the wavelength.

A general methodology that has shown a great deal of promise is the so-called ``hybrid numerical-asymptotic'' (HNA)
approach, where the numerical approximation space is enriched with oscillatory functions, chosen using partial knowledge of the high frequency (short wavelength) asymptotic behaviour. %
We refer to~\cite{ChGrLaSp:11} (and the very many references therein) for a review of this fast-evolving field and its historical development.
The HNA
approach is particularly attractive when employed within a boundary element method (BEM) framework, since knowledge of the high frequency asymptotics is required only on the boundary of the scatterer.
In this setting one first
reformulates the boundary value problem (defined precisely in~\S\ref{sec:prob}) as a boundary integral equation, with frequency dependent solution $V$, and then seeks to approximate $V$ using an ansatz of the form
\begin{equation} \label{eqn:ansatz}
  V(\bx,k)  \approx  V_{0}(\bx,k)  + \sum_{m=1}^M  V_m(\bx,k) \, \exp(\ri k \psi_m(\bx)), \quad \bx\in\Gamma,
\end{equation}
where $k$ (the wavenumber) is proportional to the frequency of the incident wave, $\Gamma$ is the boundary of the scatterer,
$V_0$ is a known (generally oscillatory)
function (derived from the high frequency asymptotics), the phases $\psi_m$ are chosen {\em a-priori} and
the amplitudes $V_{m}$, $m=1,\ldots,M$, are approximated numerically.  The key idea behind the HNA approach is that if $V_0$ and $\psi_m$, $m=1,\ldots,M$, in \rf{eqn:ansatz} are chosen wisely, then
$V_m(\cdot,k)$, $m=1,\ldots,M$, will be much less oscillatory than $V(\cdot,k)$ and so can be better approximated by piecewise polynomials than $V$ itself.

Indeed, whereas conventional BEMs for two-dimensional (2D) problems require the number of degrees of freedom to grow at least linearly with respect to frequency in order to maintain a prescribed level of accuracy as the frequency increases, HNA BEMs have been shown, for a range of problems, to require a significantly milder (often only logarithmic) growth in computational cost \cite{ChGrLaSp:11}.
However, to date, the vast majority of HNA algorithms have been restricted to problems of scattering by single convex obstacles.

The aim of this paper is to show, via rigorous numerical analysis supported by numerical results, that HNA methods can be as effective for nonconvex scatterers as they are for convex scatterers.
We propose and analyse a HNA BEM for a class of nonconvex polygons, using an ansatz of the form~(\ref{eqn:ansatz}), with $V_m$, $m=1,\ldots,M$, approximated using an $hp$ approximation space. The novelty of our analysis compared to most numerical analysis for scattering problems is that it is uniform with respect to both the discretisation and the frequency. On the one hand, our rigorous error estimates prove that, for fixed frequency, the method converges exponentially as the number of degrees of freedom is increased. On the other hand, they also show that to achieve any prescribed level of accuracy it is sufficient for the number of degrees of freedom to grow only logarithmically with respect to frequency, as frequency increases. This is the same growth as that
required by the scheme for convex polygons in~\cite{HeLaMe:11}. But this is the first time, to our knowledge, that an algorithm has been proposed, for any configuration where multiple scattering is present, that provably maintains accuracy at high frequency with degrees of freedom growing only logarithmically with frequency.

The main difficulty in developing and analysing HNA methods for nonconvex scatterers is that the high frequency asymptotic behaviour %
is significantly more complicated than in the convex case, because of the possibility of highly non-trivial multiple scattering and shadowing effects. %
Indeed, constructing a high-order uniform asymptotic solution for any given nonconvex obstacle, using, for example, the Geometrical Theory of Diffraction \cite{Ke:62,UTD,BoKi:94}, is a formidable task in general
(cf.\ e.g. \cite[\S7-\S8]{BoKi:94}), and proving rigorously the validity of high frequency asymptotic approximations is extremely challenging. Indeed, even for the simpler case of scattering by a convex polygon, while there exist methodologies to construct asymptotic approximations (e.g. \cite{BoKi:94,Ra:12}), the authors know of no rigorous theory which establishes the accuracy of such asymptotic approximations.

The HNA methodology proposed in this paper does not require knowledge of the full asymptotic solution. Rather, in order to design a HNA approximation space one needs only a representation of the form~(\ref{eqn:ansatz}), with an explicit (and relatively simple) term $V_0$ and explicit phases $\psi_m$, that captures the high frequency oscillations present in the solution. But to design HNA algorithms optimally, and prove their effectiveness by rigorous numerical analysis, one needs additionally to understand the regularity of the amplitudes $V_m$, $m=1,\ldots,M$, moreover obtaining bounds on these amplitudes that are explicit in their dependence on the wavenumber. This requires rigorous high frequency asymptotics which aims at coarser information than the full asymptotic solution. Results of this type are proved for the case of convex polygons in \cite{Convex,ChLaMo:11,HeLaMe:11}; we emphasise that even for the considerably simpler case of scattering by convex polygons, the results of these papers are the only rigorous high frequency asymptotics known to the authors. Because of multiple scattering and shadowing effects, developing any sort of rigorous high frequency asymptotics for scattering by {\em nonconvex} polygons is a formidable task. The results of this kind needed to analyse our HNA algorithm form the largest section of the paper and are proved in~\S\ref{sec:regularity} and~\S\ref{NonConvProof} below.%

At present our full analysis applies only to a particular class of nonconvex polygons, defined explicitly in \S\ref{sec:regularity}. Essentially we assume: (i) an ``orthogonality'' condition, that each exterior angle smaller than $\pi$ is a right-angle; (ii) a ``visibility'' condition, ensuring that each point on the boundary is only visible to at most three corners of the polygon (notably, this assumption avoids ``trapping'' domains as discussed, e.g., in \cite{ChGrLaLi:09,BeSp:11}).
As will be discussed in detail in \S\ref{sec:regularity}, these assumptions limit the possible complexity of the high frequency asymptotic behaviour, and hence the complexity of the ansatz \rf{eqn:ansatz}. The reason for adopting them is that they make possible a full frequency-explicit best approximation error analysis of our HNA approximation space (even so, as we shall see, this requires significant new ideas compared to the convex case \cite{HeLaMe:11}).  %
We believe though that the underlying principles behind our method apply much more generally, and in \S\ref{sec:generalisations} we give detailed suggestions as to how these assumptions could be relaxed to allow the development of both algorithms and analysis for more general nonconvex polygons.
An outline of the paper is as follows.  We begin in \S\ref{sec:prob} by stating the scattering problem and its boundary integral equation reformulation. In~\S\ref{sec:regularity} we clarify the class of nonconvex polygons for which our analysis holds, and state the exact form of the ansatz~(\ref{eqn:ansatz}) that we use.  We then provide regularity estimates
for those parts of the solution ($V_m$, $m=1,\ldots,M$) that we will approximate numerically. These estimates, which take the form of $k$-explicit bounds on the analytic continuation of $V_m$, $m=1,\ldots,M$, into the complex plane, constitute one of the main results of this paper, since they prove that $V_m$, $m=1,\ldots,M$, are not oscillatory, which is the key to achieving our goal of approximating the solution in an (almost) frequency independent way. %
The proof of these estimates occupies~\S\ref{NonConvProof}.
We define our $hp$-approximation space in~\S\ref{sec:ApproxSpace}, and prove best approximation estimates based on the results obtained in~\S\ref{sec:regularity}-\ref{NonConvProof}. In~\S\ref{sec:gal} we describe our Galerkin method, combining the results of the earlier sections to derive rigorous $k$-explicit error estimates for our approximations to the boundary solution, the total field in the exterior domain and the far field pattern.  In~\S\ref{sec:num} we present numerical examples, demonstrating the efficiency and accuracy of our scheme, and in \S\ref{sec:generalisations} we discuss extensions to more general geometries.

We end this section with some comments on the existing HNA literature.
Of the few HNA methods previously proposed for nonconvex scatterers we note the algorithm for single smooth nonconvex scatterers outlined in~\cite{Br:03,BrRe:07}. The numerical results presented in~\cite{Br:03,BrRe:07} suggest good performance at high frequencies for certain scattering configurations; however, these results are not supported by a rigorous numerical analysis, and it is not clear how the number of degrees of freedom required to achieve a prescribed accuracy depends on either the frequency or the scatterer geometry. %
We also mention the preliminary work in \cite{NonConvexConference2},
where an outline of some key steps of the algorithm described in this paper is presented without analysis.
We remark also on the related
case of multiple convex scatterers, which shares many of the difficulties associated with single nonconvex scatterers (multiple scattering, shadowing).
The case of multiple smooth convex scatterers has been considered in \cite{BrGeRe:05,Ec:05,EcRe:09,AnBoEcRe:10}.
The key theme of that body of work is a decomposition of the multiple scattering problem into a series of scattering problems for single convex obstacles, with in each case the incident field consisting of the original incident field or previously scattered waves.  Although this approach cannot be applied directly to the single nonconvex scatterers considered in this paper, %
it may, as we will discuss in \S\ref{sec:generalisations}, provide some insight into how to extend the ideas presented here to more general nonconvex scatterers.
\section{Problem statement and integral equation formulation}
\label{sec:prob}
We consider the 2D problem of scattering of a time harmonic incident plane wave
\begin{align}
u^i(\bx)&:=\re^{\ri k \bx\cdot \bd},
\qquad \bx=(x_1,x_2)\in\mathbb{R}^2,
\label{eqn:1}
\end{align}
with wavenumber $k>0$ (proportional to frequency) and unit direction vector $\bd$, by a sound soft polygon. Let $\Omega$ denote the interior of the polygon, and $D:=\mathbb{R}^2\backslash\overline{\Omega}$ the unbounded exterior domain.
The boundary value problem (BVP) we study is: given the incident field $u^i$, determine the total field $u\in C^2\left(D\right)\cap C\left(\overline{D}\right)$ such that
\begin{eqnarray}
 & \Delta u+k^2u  =  0, \quad \mbox{in }D, & \label{eqn:HE} \\
 & u  = 0, \quad \mbox{on }\Gamma:=\partial\Omega,& \label{eqn:bc1}
\end{eqnarray}
and $u^s:=u-u^i$ satisfies the Sommerfeld radiation condition (see, e.g., \cite[(2.9)]{ChGrLaSp:11}). The unique solvability of this BVP is well known (see, e.g., \cite[Theorem 2.12]{ChGrLaSp:11}). Standard arguments connecting formulations in classical function spaces to those in a Sobolev space setting (see, e.g., \cite[Theorem 3.7]{CoKr:92} and \cite[p.~107]{ChGrLaSp:11}) imply that if $u$ satisfies the above BVP then also $u\in H^1_{\mathrm{loc}}(D)$. From standard elliptic regularity results, it follows moreover that $u$ is $C^\infty$ up to the boundary of $\partial D$, excluding the corners of the polygon \cite[Lemma~2.35]{ChGrLaSp:11}.

The starting point of the boundary integral equation (BIE) formulation is that,
if $u$ satisfies the BVP then a form of Green's representation theorem holds, namely
\begin{align}
  \label{RepThm}
  u(\bx) = u^i(\bx) - \int_\Gamma \Phi_k(\bx,\by)\pdone{u}{\bn}(\by)\,\rd s(\by), \qquad \bx\in D
\end{align}
(see  \cite{Convex} and \cite[(2.107)]{ChGrLaSp:11}),
where $\Phi_k(\bx,\by):=(\ri/4)H^{(1)}_0\left(k\left|\bx-\by\right|\right)$ is the fundamental solution for~(\ref{eqn:HE}), $H_\nu^{(1)}$ the Hankel function of the first kind of order $\nu$, and $\pdonetext{u}{\bn}$ is the normal derivative, with $\bn$ the unit normal directed into $D$. We note that, as discussed in \cite{Convex} and \cite[Theorem 2.12]{ChGrLaSp:11}, it holds that
$\pdonetext{u}{\bn}\in L^2(\Gamma)$. It is well known (see, e.g., \cite[\S2]{ChGrLaSp:11}) that, starting from the representation formula \eqref{RepThm}, we can derive various BIEs for $\pdonetext{u}{\bn}\in L^2\left(\Gamma\right)$, each taking the form
\begin{align}
  \cA\frac{\partial u}{\partial \bn}&= f, \label{eqn:op061210}
\end{align}
where $f\in L^2\left(\Gamma\right)$ and $\cA:L^2\left(\Gamma\right)\rightarrow L^2\left(\Gamma\right)$ is a bounded linear operator.

In the standard combined potential formulation (see \cite[(2.114) and (2.69)]{ChGrLaSp:11}),
\begin{align}
\label{eqn:BIE_standard}
  \cA=\cA_{k,\eta} : = \frac{1}{2}\mathcal{I}+\mathcal{D}_k^\prime-\ri\eta \mathcal{S}_k,
\end{align}
and $f=\partial u^i/\partial \bn-\ri\eta u^i$, where $\eta\in\R$ is a coupling parameter, $\mathcal{I}$ is the identity operator, and the single-layer potential operator $\mathcal{S}_k$ and the adjoint double-layer potential operator $\mathcal{D}_k'$ are defined, for $\bx\in\Gamma$ and $\psi\in L^2(\Gamma)$, by
\begin{align*}
  \mathcal{S}_k\psi(\bx):=\int_\Gamma\Phi_k(\bx,\by)\psi(\by)\,\rd s(\by), %
  \qquad
  \mathcal{D}_k^\prime\psi(\bx):=\int_\Gamma\frac{\partial \Phi_k(\bx,\by)}{\partial \bn(\bx)}\psi(\by)\,\rd s(\by).%
\end{align*}
From results in \cite{CoKr:83} for $C^2$ domains, and~\cite{Convex} and \cite[Theorem 2.27]{ChGrLaSp:11} for general Lipschitz domains, $\cA_{k,\eta}$ is invertible for $k>0$, and hence~(\ref{eqn:op061210}) is uniquely solvable,  provided $\eta\in\mathbb{R}\backslash\left\{0\right\}$.
Recent results (\cite[(6.10)]{ChGrLaLi:09}, \cite[Theorem~2.11]{BeChGrLaLi:11}), building on earlier work \cite{Kr:85}, suggest $\eta=k$ is a good choice for large $k$, in that it approximately minimises the condition number of $\cA_{k,\eta}$ and its boundary element discretization.

In an important recent theoretical development \cite{SpChGrSm:11} a new formulation has been derived for the case when $\Omega$ is star-like. This takes the form~(\ref{eqn:op061210}) with %
\begin{align}
  \label{eqn:star_comb}
  \cA = \cA_k := (\bx\cdot \bn)\left(\frac{1}{2}\mathcal{I}+\mathcal{D}_k^\prime\right)+\bx\cdot\nabla_\Gamma \mathcal{S}_k+\left(\frac{1}{2}-\ri k|\bx|\right)\mathcal{S}_k,
\end{align}
the so-called ``star-combined'' operator (in which $\nabla_\Gamma$ denotes surface gradient), and $f(\bx)=\bx\cdot\nabla u^i(\bx)+(1/2-\ri k|\bx|) u^i(\bx)$.  From~\cite{SpChGrSm:11}, for $\Omega$ Lipschitz and star-like with respect to the origin, $\cA_k$ is invertible for all $k>0$. The point of this new formulation, as shown in \cite{SpChGrSm:11} and discussed below, is that $\cA_k$ is coercive on $L^2(\Gamma)$, moreover with a coercivity constant which is explicitly known and wavenumber independent.
For both formulations the following lemma holds provided $\Omega$ is Lipschitz and provided $\left|\eta\right|\leq Ck$ in the standard formulation (we shall assume henceforth that this condition always holds). Here and for the remainder of this paper $C>0$ denotes a constant whose value may change from one occurence to the next, but which is always independent of $k$, although it may (possibly) be dependent on~$\Omega$.

\begin{lem}[Continuity]
\label{lem:1}
\cite[Theorem 3.6]{ChGrLaLi:09}, \cite[Theorem 4.2]{SpChGrSm:11}
Assume that $\Omega$ is a bounded Lipschitz domain and $k_0 > 0$. In the case $\cA = \cA_{k,\eta}$
assume additionally that $\left|\eta\right| \leq C k$. Then for both $\cA = \cA_k$ and $\cA = \cA_{k,\eta}$
there exists a constant $C_0>0$, independent of $k$, such that
\[ %
  \left\|\cA\right\|_{L^2(\Gamma)}\leq C_0k^{1/2}, \qquad   k\geq k_0. %
\]
\end{lem}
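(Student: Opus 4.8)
**The plan is to prove the operator norm bound $\|\cA\|_{L^2(\Gamma)} \leq C_0 k^{1/2}$ by controlling each constituent operator separately.**

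The claimed bound $\|\cA\|_{L^2(\Gamma)} \le C_0 k^{1/2}$ is a statement about the growth of the operator norm as $k \to \infty$. Both formulations $\cA_{k,\eta}$ and $\cA_k$ are built from the single-layer operator $\mathcal{S}_k$, the adjoint double-layer operator $\mathcal{D}_k'$, multiplication operators, and in the star-combined case the surface gradient $\bx\cdot\nabla_\Gamma \mathcal{S}_k$. The natural approach is the triangle inequality: bound $\|\cA\|$ by a sum of the norms of its pieces, and show each piece is $\ord{k^{1/2}}$.

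First I would decompose $\cA_{k,\eta} = \tfrac12\mathcal{I} + \mathcal{D}_k' - \ri\eta\mathcal{S}_k$. The identity term contributes a $k$-independent constant. For the remaining terms one needs $k$-explicit $L^2(\Gamma)\to L^2(\Gamma)$ operator norm bounds on $\mathcal{S}_k$ and $\mathcal{D}_k'$. The key facts, which I would draw from the cited literature, are that $\|\mathcal{S}_k\|_{L^2(\Gamma)} = \ord{k^{-1/2}\log^{1/2} k}$ (or at worst $\ord{k^{-1/2}}$ up to logarithms) and $\|\mathcal{D}_k'\|_{L^2(\Gamma)} = \ord{k^{1/2}}$ on a Lipschitz boundary; these are precisely the kinds of estimates established in \cite{ChGrLaLi:09}. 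Combined with the hypothesis $|\eta| \le Ck$, the single-layer term contributes $|\eta|\,\|\mathcal{S}_k\| = \ord{k \cdot k^{-1/2}} = \ord{k^{1/2}}$, matching the double-layer contribution, so $\cA_{k,\eta}$ is $\ord{k^{1/2}}$ overall. This is why the hypothesis $|\eta| \le Ck$ is exactly what is needed: any faster growth of $\eta$ would break the $k^{1/2}$ bound.

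For the star-combined operator $\cA_k$ I would treat the analogous terms, noting that $(\bx\cdot\bn)$ and the factor $(\tfrac12 - \ri k|\bx|)$ are bounded multiplication operators on $\Gamma$ (compact, hence $|\bx| \le C$), so $(\tfrac12 - \ri k|\bx|)\mathcal{S}_k$ contributes $\ord{k \cdot k^{-1/2}} = \ord{k^{1/2}}$, exactly as the coupling term above. The genuinely new ingredient here is the tangential-derivative term $\bx\cdot\nabla_\Gamma \mathcal{S}_k$: I would invoke the $k$-explicit bound $\|\nabla_\Gamma \mathcal{S}_k\|_{L^2(\Gamma)} = \ord{k^{1/2}}$ from \cite{SpChGrSm:11}. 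Rather than reprove these individual estimates, however, the cleanest route is simply to cite \cite[Theorem 3.6]{ChGrLaLi:09} and \cite[Theorem 4.2]{SpChGrSm:11}, which already assemble the component bounds into the stated $k^{1/2}$ result for $\cA_{k,\eta}$ and $\cA_k$ respectively.

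**The main obstacle** is the derivation of the $k$-explicit bounds on the individual layer operators, in particular showing that the surface-gradient term $\nabla_\Gamma\mathcal{S}_k$ does not grow faster than $k^{1/2}$ on a merely Lipschitz boundary. This is delicate because the Schwartz kernel of $\mathcal{D}_k'$ and of $\nabla_\Gamma\mathcal{S}_k$ is singular, and one must carefully separate the high-frequency oscillatory contribution (which carries the $k$-dependence through the Hankel-function asymptotics $H_0^{(1)}(z) \sim (2/\pi z)^{1/2}\re^{\ri z}$) from the near-diagonal singular contribution. On a general Lipschitz boundary the tangential derivative can interact badly with corners, so establishing uniformity in $k$ requires the careful technical analysis carried out in the cited references; since those results are already available, in the present paper this lemma is established purely by citation.
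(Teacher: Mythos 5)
Your proposal is correct and takes essentially the same route as the paper: the paper offers no proof of this lemma at all, establishing it purely by citation of \cite[Theorem 3.6]{ChGrLaLi:09} and \cite[Theorem 4.2]{SpChGrSm:11}, which is exactly where your argument ends up, and your component-wise sketch (identity term $\ord{1}$, $\|\mathcal{D}_k'\|_{L^2(\Gamma)}=\ord{k^{1/2}}$, $|\eta|\,\|\mathcal{S}_k\|_{L^2(\Gamma)}=\ord{k\cdot k^{-1/2}}$, and $\|\bx\cdot\nabla_\Gamma\mathcal{S}_k\|_{L^2(\Gamma)}=\ord{k^{1/2}}$ for the star-combined case) is a faithful outline of how those references assemble the bound. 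One small caution: your parenthetical hedge that $\|\mathcal{S}_k\|_{L^2(\Gamma)}$ might carry a $\log^{1/2}k$ factor would, if it were true, combine with $|\eta|\leq Ck$ to give $\ord{k^{1/2}\log^{1/2}k}$ and spoil the clean $k^{1/2}$ bound; the cited results in fact give $\|\mathcal{S}_k\|_{L^2(\Gamma)}\leq Ck^{-1/2}$ with no logarithm, so the citation step resolves this.
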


Lemma \ref{lem:1} suggests at worst mild growth in $\|\cA\|_{L^2(\Gamma)}$ for both formulations as $k$ increases. For the case $\cA=\cA_{k,\eta}$, with $\eta$ proportional to $k$, it is shown in \cite{ChGrLaLi:09,BeChGrLaLi:11} that $\|\cA\|_{L^2(\Gamma)}$ does grow proportionally to $k^{1/2}$ for a polygonal scatterer, i.e.\ for this case at least it is known that the bound is sharp.

The regularity results we derive in \S\ref{sec:regularity}-\S\ref{NonConvProof}, and the resulting best approximation error estimates in \S\ref{sec:ApproxSpace}, will make use of the following assumption on the boundary solution, which, as will be discussed shortly, is known to hold in certain cases.
\begin{ass}[{proved in \cite[Lemma 4.2]{HeLaMe:11} in the star-like Lipschitz case}]
\label{ass:1}
There exist constants $C_1>0$ and $k_1>0$, independent of $k$, such that
\begin{equation*}
  \left\| \pdone{u}{\bn}\right\|_{L^2(\Gamma)}\leq C_1 k, \qquad k\geq k_1.
  \label{eqn:Ainvbound}
\end{equation*}
\end{ass}

The numerical analysis of our Galerkin method will be based on the following assumption on the boundary integral operator, which, as alluded to above, is also known to hold in certain cases.
\begin{ass}[Coercivity]
\label{ass:2}
There exist constants $C_2>0$ and $k_2>0$, independent of $k$, such that (where $\left\langle \cdot,\cdot\right\rangle_{L^2\left(\Gamma\right)}$ denotes the inner product in $L^2(\Gamma)$)
\[
  \big|\left\langle \cA\psi,\psi\right\rangle_{L^2\left(\Gamma\right)}\big|\geq C_2 \left\|\psi\right\|_{L^2(\Gamma)}^2, \quad \psi\in L^2\left(\Gamma\right),\,k\geq k_2.
\]
\end{ass}
If Assumption \ref{ass:2} holds, then by Lemma \ref{lem:1} and the Lax-Milgram lemma it follows that $\cA$ is invertible; moreover that $\cA^{-1}$ is uniformly bounded as $k\to\infty$, with
\begin{align}
\label{eqn:InverseBound}
  \big\| \cA^{-1}\big\|_{L^2(\Gamma)}\leq 1/C_2, \qquad k\geq k_2.
\end{align}
In particular, since for either formulation there exists a $k$-independent constant $C>0$ such that $\|f\|_{L^2(\Gamma)}\leq Ck$, Assumption \ref{ass:1} then holds with $k_1=k_2$ and $C_1 = C/C_2$.
Moreover, Assumption \ref{ass:2} guarantees that the linear system arising from any Galerkin approximation method for \eqref{eqn:op061210} is invertible, and, via C\'ea's lemma, implies explicit error estimates for the Galerkin solution, as discussed in \S\ref{sec:gal}.

The main achievement of \cite{SpChGrSm:11} is to show, via Morawetz-Ludwig identities, that, for the star-combined formulation $\cA=\cA_{k}$, Assumption \ref{ass:2} (and hence \rf{eqn:InverseBound}) holds for any star-like Lipschitz $\Omega$ (including those star-like members of our class $\cC$ of polygons defined below), and for all $k_2>0$, moreover with the explicit constant
\begin{equation*} %
C_2=\frac{1}{2} \essinf_{\bx\in \Gamma}(\bx\cdot \bn(\bx)).
 \end{equation*}
By contrast, for the standard formulation $\cA=\cA_{k,\eta}$, while \rf{eqn:InverseBound} is known to hold for all star-like Lipschitz $\Omega$ and for all $k_2>0$ (provided $\eta$ is proportional to $k$) \cite{ChMo:08}, Assumption \ref{ass:2} has only been proven to hold (for all $k_2>0$) when the scatterer is circular \cite{DoGrSm:07,SpChGrSm:11} and, for $k_2$ sufficiently large, when the scatterer is a strictly convex $C^3$ domain with strictly positive curvature (\cite{SpKaSm:13} and \cite[Theorem~5.25]{ChGrLaSp:11}). However, recent 2D numerical evidence, based on clever  numerical computations of coercivity constants, suggests that Assumption \ref{ass:2} holds much more generally, in particular for all star-like obstacles, and also for ``non-trapping'' non-star-like polygons (hence for all members of the class of nonconvex polygons (defined in \S\ref{sec:regularity}) we study in this paper)
\cite[Conjecture 6.2]{BeSp:11}.
\section{High frequency asymptotics and regularity of solutions}
\label{sec:regularity}
Our goal is to derive a numerical method for the solution of the BIE~(\ref{eqn:op061210}) (and hence of the scattering problem \rf{eqn:HE}-\rf{eqn:bc1}), whose performance does not deteriorate significantly as the wavenumber $k$ increases, equivalently as the wavelength $\lambda:=2\pi/k$ decreases.  Specifically, we wish to avoid the requirement of conventional schemes for a fixed number of degrees of freedom per wavelength.  To achieve this goal, our numerical method for solving~(\ref{eqn:op061210}) uses a HNA approximation space (defined explicitly in \S\ref{sec:ApproxSpace}) %
adapted to the high frequency asymptotic behaviour of the solution $\partial u/\partial \bn$ on each of the sides of the polygon. For sound-soft convex polygons, this behaviour was determined in \cite{HeLaMe:11,Convex}. A key contribution of this paper is to introduce new methods of argument which enable us to deduce precisely and rigorously this behaviour for a range of cases when the polygon is not convex.

\begin{figure}[t]
\begin{center}
\subfigure[Multiple reflections. MR=multiply-reflected ray, DR=diffracted-reflected ray.]{
\begin{tikzpicture}[>=triangle 45,x=1.0cm,y=1.0cm, scale=1.0]
\node (0,0) {\includegraphics[width=5cm]{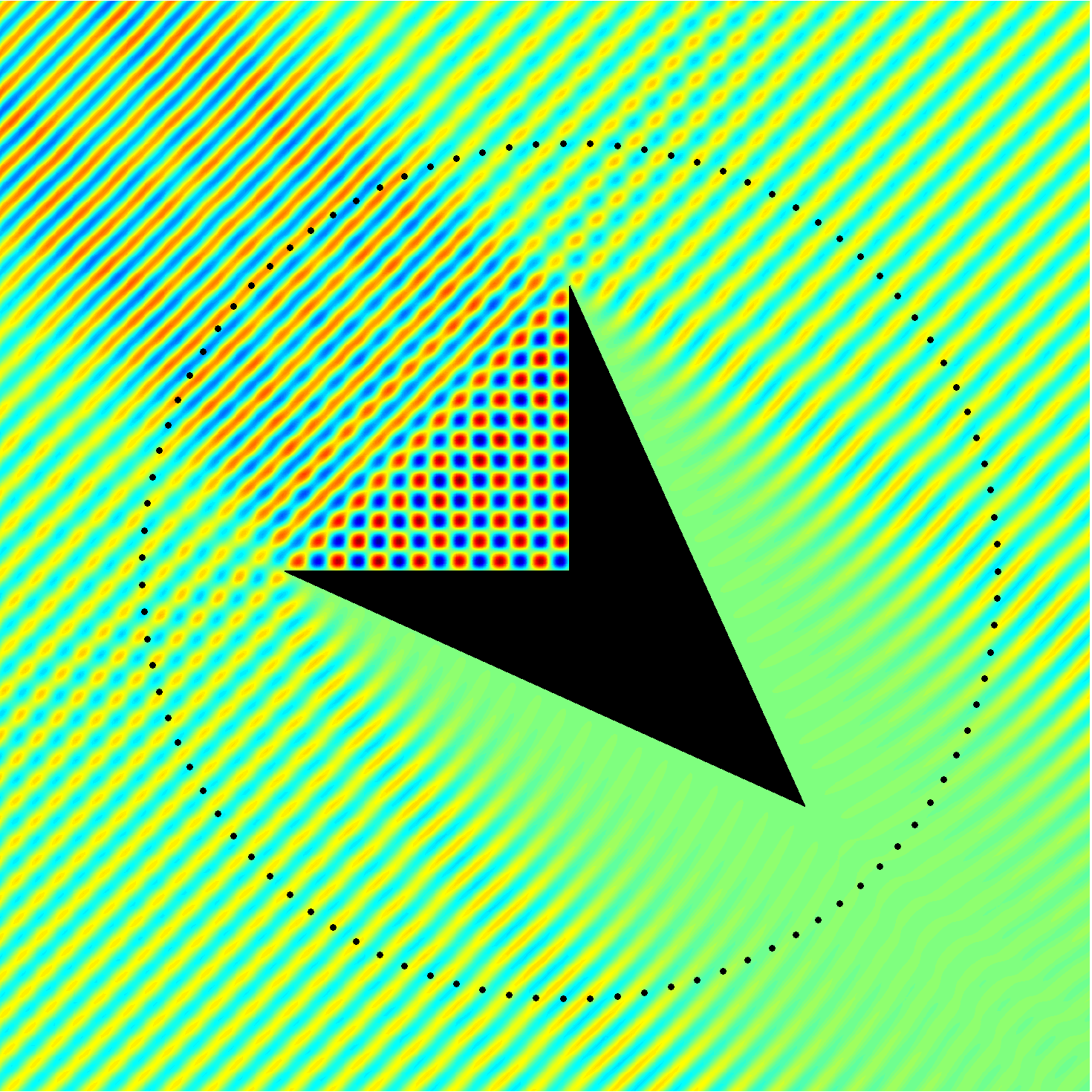}};
\draw[->,thick] (-2,2)--(-1,1);
\begin{scope}[>=latex,xshift=0.12cm,yshift=-0.12cm]
\draw[->,thick] (-0.8,1.1)--(0,0.3) -- (-0.3,0) -- (-1.2,0.9);
\draw (-1.1,0.8) node[anchor=east]{MR};
\draw[->,thick] (-0.4,1.68)--(0,1.28) -- (-0.7,0) -- (-1.05,0.35*1.28);
\draw (-1.0,0.35*1.28-0.1) node[anchor=east]{DR};
\end{scope}
\draw (-1.35,1.75) node {$\mathbf{d}$};
\end{tikzpicture}
}
\hs{5}
\subfigure[Partial illumination. SB=shadow boundary.]{
\begin{tikzpicture}[>=triangle 45,x=1.0cm,y=1.0cm, scale=1.0]
\node (0,0) {\includegraphics[width=5cm]{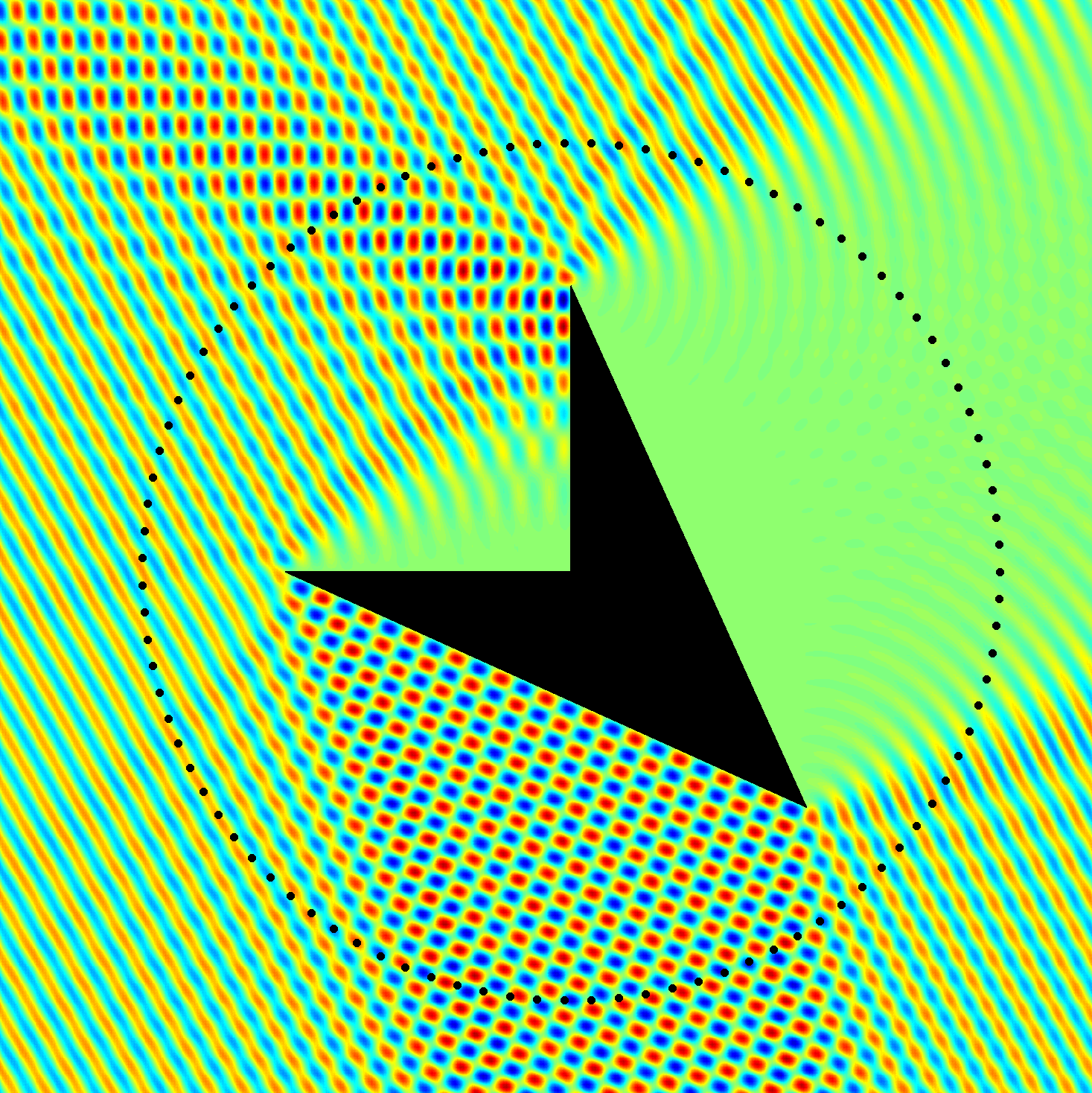}};
\draw[->,thick] (-2,-1.8) -- ++(1.225,0.707);
\draw (-1.5,-1.8) node {$\mathbf{d}$};
\draw[dashed,thick] (-1.2,-0.12)--++(1.1*1.225,1.1*0.707);
\draw (-0.35,0.5) node[anchor=east]{SB};
\end{tikzpicture}
}
\end{center}
\caption{Plots of the real part of the total field for scattering of a plane wave by a sound-soft nonconvex polygon for two incidence directions $\bd$ (exact dimensions are given in~\S\ref{sec:num}; the circle surrounding the scatterer is used for the computation of errors in the total field, see Figure~\ref{fig:total4}).}
\label{fig:total1}
\end{figure}

As alluded to in \S\ref{Introduction}, the main difficulty in developing HNA methods for nonconvex scatterers is that the high frequency asymptotic behaviour, %
knowledge of which is required for the choice of $V_0$ and $\psi_m$ in~(\ref{eqn:ansatz}),
is significantly more complicated than in the convex case. %
For polygonal scatterers in 2D two additional complexities are illustrated in Figure~\ref{fig:total1}. %
First, multiply-reflected and diffracted-reflected rays can be present in the asymptotic solution, as in Figure~\ref{fig:total1}(a). (These do not occur in the convex case, where all reflected rays propagate to infinity without further interaction with the scatterer.) We expect this to increase the number of terms required in the HNA ansatz \rf{eqn:ansatz}. Second, there is the possibility of partial illumination of a side of the polygon by one of the ray fields in the asymptotic solution, as in Figure~\ref{fig:total1}(b).
To explain the significance of this effect, we note that in the schemes proposed for convex polygons in \cite{Convex,ChLaMo:11,HeLaMe:11}, the sides of the polygon are classified according to whether they are ``illuminated'' or ``in shadow'' with respect to the incident wave, with a different approximation space being used on the two types of side. In the nonconvex case, a side can be partially illuminated and partially in shadow, because of the shadowing effect of another part of the scatterer, as for the vertical side in Figure~\ref{fig:total1}(b).
Across the shadow boundary between the illuminated and shadow regions the solution varies smoothly, but increasingly rapidly as the frequency increases, approaching the jump discontinuity predicted by the classical ``geometrical optics'' approximation in the limit of infinite frequency. This rapid variation must be correctly captured by the HNA ansatz \rf{eqn:ansatz}.

To restrict the complexity of the asymptotic behaviour that can arise, and to allow a full numerical analysis of our HNA method, we will focus our attention on the following particular class of polygons.%
\begin{defn}
\label{classCDef}
Let $\cC$ denote the class of all polygons $\Omega\subset\R^2$ for which the following two conditions are satisfied:
\begin{enumerate}[(i)]
\item ``Orthogonality'': Each external angle is either greater than $\pi$ or equal to $\pi/2$.
\item ``Visibility'': For each external angle equal to $\pi/2$, if $\Omega$ is rotated into the configuration in Figure \ref{fig:scatterers}(a), then $\Omega$ is contained entirely in the region bounded by the sides $\Gamma_{\rm nc}$ and $\Gamma_{\rm nc}'$ and the two dotted lines.
\end{enumerate}\end{defn}
\begin{figure}[ht!]
  \begin{center}
     \subfigure[]{\label{fig:scatterer0}
  	\begin{tikzpicture}[line cap=round,line join=round,>=triangle 45,x=1.0cm,y=1.0cm, scale=0.55]
\fill[line width=0pt,color=cqcqcq,fill=cqcqcq] (8.26,-0)--(5,3)--  (5,0)-- (2,0)-- (3.08,-2) -- (8.26,-2) -- cycle;
	\draw [line width=1.2pt] (8.26,-0)--(5,3)--  (5,0)-- (2,0)-- (3.08,-2);
	\draw (4.4,1.4) node {$\Gamma_{\rm nc}'$};
	\draw (3.6,0.4) node{$\Gamma_{\rm nc}$};
	\draw [line width=1pt,dotted] (5,3)-- (8.26,3);
	\draw [line width=1pt,dotted] (2,0)-- (2,-2);
	\draw (5,0.4)-- (4.6,0.4) -- (4.6,0);
		\draw (5,2.6)-- (5.4,2.6) -- (5.4,3);
			\draw (2,-0.4)-- (2.4,-0.4)-- (2.4,0);
	\draw (6.3,-1.1) node {$\Omega$};
\end{tikzpicture}
    }
    \hs{2}
    \subfigure[Star-like.]{\label{fig:scatterer1}
   	\begin{tikzpicture}[>=triangle 45,x=1.0cm,y=1.0cm, scale=0.8]
   			\fill[line width=0pt,color=cqcqcq,fill=cqcqcq] (0,0)-- (0,-2) -- (-2,-2) -- (1.65,-3.65)  -- cycle;
		\draw [line width=1.2pt] (0,0)-- (0,-2);
		\draw (0.05,-0.7) node[anchor=east] {NC};
		\draw [line width=1.2pt] (0,-2)-- (-2,-2);
		\draw  (-1,-2) node[anchor=south] {NC};
		\draw [line width=1.2pt] (-2,-2)-- (1.65,-3.65);
		\draw (-0.2,-3) node[anchor=east] {C};
		\draw [line width=1.2pt] (1.65,-3.65)-- (0,0);
		\draw  (0.8250,-1.825) node[anchor=west] {C};
			\draw (0.4,-2.2) node {$\Omega$};
		\end{tikzpicture}
    }
        \hs{2}
    \subfigure[Non-star-like.]{\label{fig:scatterer2}
    \begin{tikzpicture}[>=triangle 45,x=1.0cm,y=1.0cm, scale=0.36]
    				\fill[line width=0pt,color=cqcqcq,fill=cqcqcq] (6,4)-- (1.75,4)-- (1.75,-1)--(0,-1)-- (0,-3)--(4,-3)--(4,2)-- (6,2)-- cycle;
	\draw [line width=1.2pt] (6,4)-- (1.75,4);
		\draw (3.7,4) node[anchor=south] {C};
		\draw [line width=1.2pt] (1.75,4)-- (1.75,-1);
		\draw (1.9,1.8) node[anchor=east] {NC};
		\draw [line width=1.2pt] (1.75,-1)-- (0,-1);
		\draw (0.8,-1.1) node[anchor=south] {NC};
		\draw [line width=1.2pt] (0,-1)-- (0,-3);
		\draw (0,-2) node[anchor=east] {C};
		\draw [line width=1.2pt]  (0,-3)-- (4,-3);
		\draw (2,-3) node[anchor=north] {C};
		\draw [line width=1.2pt] (4,-3)-- (4,2);
		\draw (3.9,-1.1) node[anchor=west] {NC};
		\draw[line width=1.2pt]  (4,2)-- (6,2);
	\draw (5,2.1) node[anchor=north] {NC};
		\draw [line width=1.2pt] (6,2)-- (6,4);
		\draw (6,3) node[anchor=west] {C};
					\draw (2.7,0.5) node {$\Omega$};
				\end{tikzpicture}
}
  \end{center}
  \caption{(a) Illustration of condition 2 in Definition \ref{classCDef}; %
(b)-(c) examples of polygonal scatterers in the class $\cC$, with convex (C) and nonconvex (NC) sides labelled.}
  \label{fig:scatterers}
\end{figure}
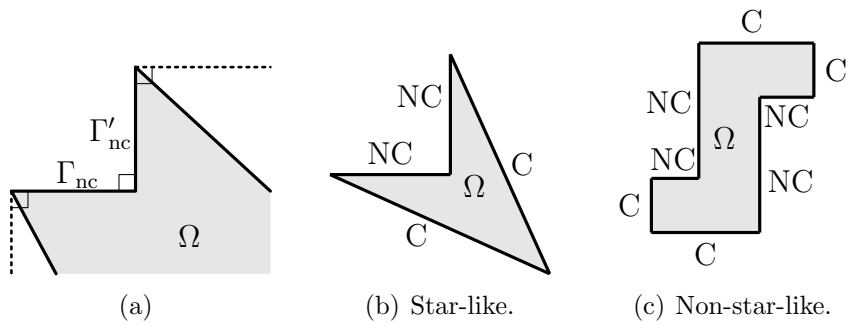
For a polygon in the class $\cC$ we define two types of side:  if the external angles at the endpoints of the side are both greater than $\pi$ then we say that it is a ``convex'' side; if one is equal to $\pi/2$ then we say that it is a ``nonconvex'' side; note that nonconvex sides come in pairs. We say that a convex side is illuminated by the incident wave if $\bd\cdot \bn<0$ on the side, and is in shadow if $\bd\cdot \bn\geq 0$.%

In Figure~\ref{fig:scatterers}(b)-(c) we show two examples of members of the class $\cC$, one star-like and one non-star-like. Of course, $\cC$ represents only a small subset of the set of all nonconvex polygons; in \S\ref{sec:generalisations} we provide detailed suggestions as to how the design of our HNA approximation space, and our rigorous analysis, might be generalised to polygons outside this class.

Our approach to tackling the issues of multiple reflections and partial illumination described above is to follow the spirit of high frequency asymptotic methods such as the Geometrical Theory of Diffraction \cite{BoKi:94}, and consider simple ``canonical problems'' which encapsulate the behaviour in question.
This is consistent with the approach taken for the convex polygon case in \cite{HeLaMe:11,Convex}, where the reflection of the incident wave by the illuminated sides is treated by considering the canonical problem of reflection by a half-plane (cf.\ \cite[pp.~621-622]{Convex}). For nonconvex polygons in the class $\cC$, the canonical problem associated with multiple reflections is that of scattering in a quarter-plane. The canonical problem associated with partial illumination is that of diffraction by a wedge (equivalently, as we shall see, diffraction by a knife edge).
We shall now show how consideration of these canonical problems allows us to choose $V_0$ and $\psi_m$ appropriately in~(\ref{eqn:ansatz}) so that $V_m$, $m=1,\ldots,M$, are non-oscillatory.

\subsection{Behaviour on convex sides}

We first consider the behaviour on a typical convex side, which we denote $\Gamma_{\rm c}$. As illustrated in Figure~\ref{fig:convex}, $\bP^{\pm}$ will denote the endpoints of $\Gamma_{\rm c}$, and $\omega^\pm\in(\pi,2\pi)$ the corresponding exterior angles. A point $\bx$ on $\Gamma_{\rm c}$ is given in terms of the arc length $s$ measured from $\bP^+$ by $\bx(s)=\bP^++ (s/L_{\rm c})(\bP^--\bP^+)$ for $s\in [0,L_{\rm c}]$, where $L_{\rm c}=|\bP^--\bP^+|$ is the length of $\Gamma_{\rm c}$. The analysis for convex polygons in \cite{HeLaMe:11,Convex} carries over virtually verbatim to this case. Precisely, arguing as in \cite[\S 3]{HeLaMe:11} %
gives:
\begin{thm}
\label{thm:1}
On a convex side $\Gamma_{\rm c}$, %
\begin{align}
  \pdone{u}{\bn}(\bx(s)) = \Psi(\bx(s)) + v^+(s)\re^{\ri ks} +v^-(L_{\rm c}-s) \re^{-\ri ks}, %
\label{Decomp}
\end{align}
for $s\in[0,L_{\rm c}]$,
where
\begin{enumerate}[(i)]
\item $\Psi:=2\pdonetext{ u^i}{\bn}$ if $\Gamma_{\rm c}$ is illuminated and $\Psi:=0$ otherwise;
\item the functions $v^\pm(s)$ are analytic in the right half-plane $\real{s}>0$; further, for every $k_0>0$ we have
\begin{align}
  \label{vjpmBounds}
  |v^\pm(s)|\leq \begin{cases} C \uM k|ks|^{-\delta^\pm}, & 0<|s|\leq 1/k,\\
  C \uM k|ks|^{-1/2}, & |s|> 1/k,
  \end{cases}
  \quad \real{s}>0,
\end{align}
for $k\geq k_0$, where $\delta^\pm :=1-\pi/\omega^\pm \in (0,1/2)$,
\begin{align}
\label{}
\uM:=\sup_{\bx\in D}|u(\bx)|,
\end{align}
and the constant $C>0$ depends only on $\Omega$ and $k_0$.
\end{enumerate}
\end{thm}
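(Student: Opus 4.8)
The plan is to exploit the fact that the statement is entirely local to the convex side $\Gamma_{\rm c}$ and its two endpoints $\bP^\pm$: in a neighbourhood of $\Gamma_{\rm c}$ the boundary is indistinguishable from part of a convex polygon (a straight side meeting two further sides at convex corners of exterior angles $\omega^\pm\in(\pi,2\pi)$), so the argument of \cite[\S3]{HeLaMe:11} for convex polygons transfers with no essential change. First I would fix local coordinates placing $\Gamma_{\rm c}$ on a segment of a straight line with $D$ lying locally on one side of it, and introduce the half-plane Dirichlet Green's function $\Phi_k(\bx,\by)-\Phi_k(\bx,\by^\ast)$, where $\by^\ast$ is the reflection of $\by$ in that line. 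Applying a Green's representation to $u$ in the intersection of this half-plane with $D$ isolates the reflected geometric-optics field: the source-plus-image contribution reduces on $\Gamma_{\rm c}$ to $2\pdonetext{u^i}{\bn}$ when $\Gamma_{\rm c}$ is illuminated (the factor $2$ being forced by the Dirichlet condition $u=0$ on $\Gamma_{\rm c}$) and to $0$ when $\Gamma_{\rm c}$ is in shadow. This yields part (i) and the leading term $\Psi$ of \rf{Decomp}.

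The remaining boundary integral, over $\Gamma\setminus\Gamma_{\rm c}$, represents the diffracted field on $\Gamma_{\rm c}$, and the next step is to split it into the two corner contributions. Once the geometric-optics part is removed, the dominant contributions come from neighbourhoods of the two corners $\bP^\pm$; the piece localised at $\bP^+$ (at $s=0$) produces a wave travelling in the direction of increasing $s$ carrying the phase $\re^{\ri ks}$ of the diffracted ray emanating from that corner along the side, i.e.\ the term $v^+(s)\re^{\ri ks}$, while symmetrically the piece localised at $\bP^-$ (at $s=L_{\rm c}$) produces $v^-(L_{\rm c}-s)\re^{-\ri ks}$, with $L_{\rm c}-s$ the arclength measured from $\bP^-$. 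This is precisely the ansatz \rf{eqn:ansatz} specialised to a convex side, and it encodes the canonical problem of diffraction of the geometric-optics field by the wedge at each corner.

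For the analyticity and the bounds I would work directly with the resulting integral representation of each $v^\pm(s)$, now regarding the arclength $s$ as a complex variable. Analyticity for $\real{s}>0$ follows because the Hankel kernel is an analytic function of the complexified distance away from the corner, and the contour may be deformed to avoid the branch point sitting at $s=0$; the restriction to the right half-plane is exactly what keeps $s$ clear of this branch point and of the negative real axis where the singular factor $|ks|^{-\delta^\pm}$ is cut. The quantitative estimates \rf{vjpmBounds} then come from bounding this integral: the overall factor $\uM\,k$ arises from controlling the integrand by $\uM=\sup_{D}|u|$ together with the $\ord{k}$ size of the geometric-optics normal derivative, while the exponent $\delta^\pm=1-\pi/\omega^\pm$ is the order of the Dirichlet corner singularity (the field behaves like $r^{\pi/\omega^\pm}$ near $\bP^\pm$, so its normal derivative behaves like $r^{-\delta^\pm}$), which produces the growth $|ks|^{-\delta^\pm}$ for $0<|s|\le 1/k$; for $|s|>1/k$ the large-argument asymptotics of the Hankel function instead give the cylindrical-wave decay $|ks|^{-1/2}$.

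The hard part will be making every one of these estimates uniform and explicit in $k$, so that the constant $C$ ends up depending only on $\Omega$ and $k_0$. The two delicate points are the matching of the two kernel regimes across the transition $|s|\sim 1/k$, and the extraction of the corner singularity near $s=0$ without spoiling the $k$-explicitness; in particular it is essential to bound the corner contribution in terms of $\uM$ alone, rather than in terms of $k$-dependent norms of $\pdonet{u}{\bn}$, since it is the $\uM$-dependence that makes \rf{vjpmBounds} usable in the later best-approximation analysis. Once these $k$-explicit bounds are established at one corner, applying the same argument at the other endpoint of $\Gamma_{\rm c}$ completes the proof.
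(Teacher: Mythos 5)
Your skeleton---half-plane image Green's function $\Phi_k(\bx,\by)-\Phi_k(\bx,\by^\ast)$, geometrical-optics leading term, complexification of $s$, Hankel-function analyticity and asymptotics, and the exponent $\delta^\pm=1-\pi/\omega^\pm$ originating in the corner singularity---is the same as the paper's (which defers to \cite[\S 3]{HeLaMe:11}). But there is a genuine gap at the central step. After applying Green's representation with the image Green's function, the residual integrals are \emph{not} over $\Gamma\setminus\Gamma_{\rm c}$. For a convex side of a polygon in the class $\cC$, the half-plane $H$ whose boundary extends $\Gamma_{\rm c}$ satisfies $H\subset D$ (this containment is exactly what the class-$\cC$ geometry provides, and it should be asserted rather than hedged with ``the intersection of this half-plane with $D$''). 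Green's representation is then applied in $H$ itself; the image Green's function vanishes on all of $\partial H$, and $u$ vanishes on $\Gamma_{\rm c}$, so what survives are integrals over the two rays $\gamma^{\pm}\subset D$ extending $\Gamma_{\rm c}$ beyond $\bP^{\pm}$, whose integrand contains $u|_{\gamma^{\pm}}$ as a factor. This mislocation is not cosmetic. First, on $\Gamma\setminus\Gamma_{\rm c}$ the surviving density would be $\pdonetext{u}{\bn}$ (there $u=0$ but the Green's function does not vanish), so your own requirement of bounding everything by $\uM$ alone could not be met. Second, the phases and the very definition of $v^{\pm}$ come from an exact collinearity identity, not from localization: for $\bx(s)\in\Gamma_{\rm c}$ and $\by\in\gamma^{+}$ at distance $t$ beyond $\bP^{+}$ one has $|\bx(s)-\by|=s+t$, so the kernel (a Hankel function of $k(s+t)$) oscillates exactly like $\re^{\ri k(s+t)}=\re^{\ri ks}\,\re^{\ri kt}$, and $v^{+}(s)$ is an explicit $t$-integral; the split into two terms is automatic because $\gamma^{+}$ and $\gamma^{-}$ are separate contours. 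The decomposition \rf{Decomp} is an exact identity, not an asymptotic statement.

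By contrast, your replacement step---``the dominant contributions come from neighbourhoods of the two corners'' and ``the piece localised at $\bP^{+}$ produces the phase $\re^{\ri ks}$''---is ray-asymptotics reasoning. For $\by$ on another side of the polygon, $|\bx(s)-\by|$ does not split as $s$ plus an $s$-independent quantity, so no exact phase factorization is available; a localization or stationary-phase argument yields only an asymptotic decomposition with remainders that have no place to go in \rf{Decomp}, and it cannot deliver analyticity of $v^{\pm}$ in all of $\real{s}>0$ together with the uniform bounds \rf{vjpmBounds}. Once the contours are correctly taken to be $\gamma^{\pm}$, the rest of your plan does go through essentially as you describe: analyticity holds because $s\mapsto \re^{-\ri k(s+t)}H^{(1)}_{\nu}(k(s+t))$ continues analytically for $\real{s}>0$ (with $t\geq 0$ the argument stays clear of the branch cut); the factor $\uM$ enters because the density on $\gamma^{\pm}$ is $u$ itself; the exponent $\delta^{\pm}$ enters through the rate of vanishing of $u$ at the corner, $|u|\leq C\uM\min\bigl(1,(kr)^{\pi/\omega^{\pm}}\bigr)$ on $\gamma^{\pm}$ (the dual of your statement that $\pdonetext{u}{\bn}\sim r^{-\delta^{\pm}}$); and the two regimes in \rf{vjpmBounds} come simply from using the small- and large-argument Hankel bounds in the explicit integral, with no delicate ``matching'' at $|s|\sim 1/k$ required.
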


\begin{figure}[t]
  \begin{center}
  	\begin{tikzpicture}[line cap=round,line join=round,>=triangle 45,x=1.0cm,y=1.0cm, scale=1]
  		\fill[line width=0pt,color=cqcqcq,fill=cqcqcq](-1,-1)--(0,0)--(3,-0) -- (4,-1)-- cycle;
	\draw [line width=1.2pt] (-1,-1)--(0,0)--(3,-0) -- (4,-1);
	\draw (0,0.25) node {$\bP^{-}$};
	\draw (0.6,0) arc (0:225:0.6);
	\draw (-0.6,0.6) node {$\omega^{-}$};
	\draw (3.2,0.25) node {$\bP^{+}$};
	\draw (2.4,0) arc (180:-45:0.6);
	\draw (3.8,0.6) node {$\omega^{+}$};
 	\draw (1.6,0.25) node {$\Gamma_{\rm c}$};
	\draw (1.5,-0.8) node {$\Omega$};
	\filldraw (1,0) circle (2pt);
	\draw (0.9,0.25) node {$\bx$};
	\draw [<->] (1,-0.2) -- (3,-0.2);
	\draw (2,-0.4) node {$s$};
	\draw [line width=1pt,dotted] (-1.5,0)-- (0,0);
	\draw [line width=1pt,dotted] (3,0)-- (4.5,0);
	\draw (-1.3,-0.25) node {$\gamma^-$};
	\draw (4.3,-0.25) node {$\gamma^+$};
	\draw (-1.3,0.8) node {$H$};
\end{tikzpicture}
\end{center}
  \caption{Geometry of a typical convex side $\Gamma_{\rm c}$.}
  \label{fig:convex}
\end{figure}
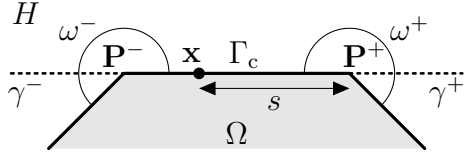

\begin{proof}Follows similar arguments to those used in \cite{HeLaMe:11,Convex}. We merely summarise the key steps in the proof here, in order to motivate the arguments used in the (more complicated) proof of the analogous result for the nonconvex sides (Theorem \ref{Gamma2Thm} below). The key first step is to apply Green's representation theorem in the half plane $H\subset D$ whose boundary extends $\Gamma_{\rm c}$ (cf. Figure~\ref{fig:convex}). The Dirichlet Green's function for $H$ is known explicitly by the method of images in terms of the fundamental solution $\Phi_k$. This gives $\pdonetext{u}{\bn}$ on $\Gamma_{\rm c}$ as a leading order term, plus the sum of two integrals over the contours $\gamma^\pm$ of Figure~\ref{fig:convex}. The integrand in the integral over $\gamma^\pm$ contains $u$ restricted to $\gamma^\pm$ as a factor, and the integrals over $\gamma^+$ and $\gamma^-$ correspond to the diffracted fields emanating from the corners $\bP^+$ and $\bP^-$, respectively. This motivates extracting out as factors the respective phases $\re^{\ri ks}$ and $\re^{-\ri ks}$, leaving the factors $v^+(s)$ and $v^-(L_{\rm c}-s)$ in \eqref{Decomp}.  Finally, using analyticity properties of the Hankel function that appears in the definition of $\Phi_k$, it can be shown that the functions $v^\pm(s)$ can be analytically continued into the complex plane where they satisfy the bounds \eqref{vjpmBounds} (see \cite[Theorem~3.2]{HeLaMe:11}).
\end{proof}

\begin{rem} \label{rem:M} The dependence of $\uM$ on the wavenumber $k$ is not yet fully understood. In \cite[Theorem 4.3]{HeLaMe:11} it is shown that $\uM = \mathcal{O}(k^{1/2}\log^{1/2}k)$ as $k\to\infty$, uniformly with respect to the angle of incidence, when $\Omega$ is a star-like polygon.  However, it is plausible, and consistent with the numerical results in \S\ref{sec:num},
that in fact $\uM=\ord{1}$ as $k\to\infty$ in this case, and indeed for the whole class $\cC$.
\end{rem}

\begin{rem}
\label{rem:PO}
The representation~\rf{Decomp} can be interpreted in terms of high frequency asymptotics %
as follows. The first term, $\Psi$ (corresponding to $V_0$ in~(\ref{eqn:ansatz})), is the geometrical optics approximation to $\pdonetext{u}{\bn}$, representing the contribution of the incident and reflected rays (where they are present). (Using this approximation alone in the representation \rf{RepThm} gives the ``physical optics'' approximation of $u$ in $D$.)  The second and third terms in~\rf{Decomp} represent the combined contribution of all the diffracted rays emanating from the corners $\bP^+$ and $\bP^-$, respectively (including those multiply-diffracted rays which have travelled arbitrarily many times around the boundary).
\end{rem}
\subsection{Behaviour on nonconvex sides}
\label{sec:nonconvexreg}
We now consider the typical behaviour on a nonconvex side, which we denote $\Gamma_{\rm nc}$. As illustrated in Figure \ref{fig-1}(a), $\bP$ and $\bQ$ will denote the endpoints of $\Gamma_{\rm nc}$, and $\bR$ and $\bQ$ the endpoints of the adjoining nonconvex side, which we denote $\Gamma_{\rm nc}'$. We let $L_{\rm nc}$ and $L_{\rm nc}'$ denote the lengths of $\Gamma_{\rm nc}$ and $\Gamma_{\rm nc}'$, respectively, and we denote the exterior angle at $\bP$ by $\omega$. A point $\bx$ on $\Gamma_{\rm nc}$ is then given in terms of the arc length $s$ measured from $\bQ$ by $\bx(s)=\bQ+ (s/L_{\rm nc})(\bP-\bQ)$ for $s\in [0,L_{\rm nc}]$. We also introduce local Cartesian coordinates $\bx=(x_1,x_2)$ and polar coordinates $(r,\theta)$ (both with the origin at $\bR$), as defined in Figure~\ref{fig-1}(a). We note that any nonconvex side can be transformed to this configuration by a rotation and a reflection of $\Omega$.

\begin{figure}[t]
\begin{center}
 \hs{-5}
 \subfigure[Local coordinates on $\Gamma_{\rm nc}$]{
	\begin{tikzpicture}[line cap=round,line join=round,>=triangle 45,x=1.0cm,y=1.0cm, scale=0.6]
	\fill[line width=0pt,color=cqcqcq,fill=cqcqcq](9,0.5)--(5,3)--  (5,0)-- (2,0)-- (3.08,-2) -- (9,-2) -- cycle;
	\draw [line width=1.2pt] (9,0.5)--(5,3)--  (5,0)-- (2,0)-- (3.08,-2);
	\draw (4.6,3.2) node {$\bR$};
	\draw (5.3,-0.2) node {$\bQ$};
	\draw (2,0.3) node {$\bP$};
	\draw (2.6,0) arc (0:300:0.6);
	\draw (1.4,-0.7) node {$\omega$};
	\draw (4.5,1.05) node {$\Gamma_{\rm nc}'$};
	\draw (3.9,0.35) node{$\Gamma_{\rm nc}$};
	\draw (5,0.38)-- (4.58,0.38);
	\draw (4.58,0.38)-- (4.58,0);
	\draw [->] (5,3) -- (5,5);
	\draw [->] (5,3) -- (7,3);
	\draw (7.1,3.3) node {$x_1$};
	\draw (5.5,4.8) node {$x_2$};
	\draw [line width=1pt,dotted] (-0.5,0)-- (2,0);
	\draw [line width=1pt,dotted] (5,3)-- (5,6);
	\draw (-0.4,-0.4) node {$\gamma$};
	\draw (4.6,5.8) node {$\gamma'$};
	\draw (0.5,4.5) node {$Q$};
	\draw [->] (5,2.2) arc (-90:26.57:0.8);
	\draw (6,2.7) node {$\alpha$};
	\draw (9.5,4) -- (8.5,6) ;
	\draw (9.3,3.9) -- (8.3,5.9) ;
	\draw (9.1,3.8) -- (8.1,5.8) ;
	\draw [thick,->] (9,5) -- (8,4.5) ;	
	\draw [dashed] (5,3) -- (9,5);
	\draw (9.5,5.2) node {$u^i$};
	\draw (8.5,4.2) node {$\bd$};
	\draw (6.7,-1.1) node {$\Omega$};
	\filldraw (8,0) circle (3pt);
	\draw (8,0.3) node {$\bP'$};
	\filldraw (2.9,0) circle (3pt);
	\draw (2.8,0.3) node {$\bx$};
	\draw [<->] (3.05,0.2) -- (4.9,2.8);
	\draw (3.7,1.5) node {$r$};
	\draw [->] (5,1.6) arc (-90:237:1.4);
	\draw (4,4.4) node {$\theta$};
	\draw [<->] (3.0,-0.2) -- (4.9,-0.2);
	\draw (3.9,-0.5) node {$s$};
\end{tikzpicture}
}
 \subfigure[Diffraction by a knife edge]{
	\begin{tikzpicture}[line cap=round,line join=round,>=triangle 45,x=1.0cm,y=1.0cm, scale=0.6]
	\draw [line width=1.2pt] (5,3)-- (5,-2);
	\draw [->] (5,2.2) arc (-90:26.57:0.8);
	\draw (6,2.6) node {$\alpha$};
	\draw (9.5,4) -- (8.5,6) ;
	\draw (9.3,3.9) -- (8.3,5.9) ;
	\draw (9.1,3.8) -- (8.1,5.8) ;
	\draw [thick,->] (9,5) -- (8,4.5) ;	
	\draw [dashed] (5,3) -- (9,5);
	\draw (9.5,5.2) node {$u^i$};
	\draw (8.5,4.2) node {$\bd$};
	\draw (4.6,3.2) node {$\bR$};
	\filldraw (2.9,0) circle (3pt);
	\draw (2.8,0.3) node {$\bx$};
	\draw [<->] (3.05,0.2) -- (4.9,2.8);
	\draw (3.7,1.5) node {$r$};
	\draw [->] (5,1.6) arc (-90:237:1.4);
	\draw (4,4.4) node {$\theta$};
\end{tikzpicture}
}\caption{Geometry of a typical nonconvex side $\Gamma_{\rm nc}$.
}
\label{fig-1}
\end{center}
\end{figure}
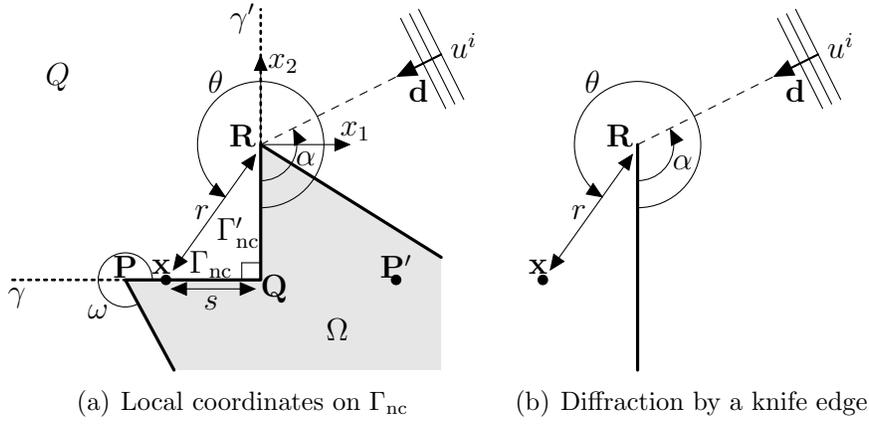

We expect the high frequency asymptotic behaviour of $\pdonetext{u}{\bn}$ on $\Gamma_{\rm nc}$ to involve: diffracted waves from the corners $\bP$ and $\bR$; reflection by the side $\Gamma_{\rm nc}'$; and, depending on the direction of incidence, illumination (partial or otherwise). %
One might expect the leading order behaviour on $\Gamma_{\rm nc}$ to be given by the canonical solution for diffraction of $u^i$ by the infinite wedge formed by extending the two sides emanating from $\bR$ towards the bottom right of Figure~\ref{fig-1}(a). In fact, it is sufficient to consider a simpler canonical solution, namely that for diffraction of $u^i$ by the infinite knife edge
formed by extending the side $\Gamma_{\rm nc}'$ towards the bottom of Figure \ref{fig-1}(a), as illustrated in Figure \ref{fig-1}(b).
This is because, on $\Gamma_{\rm nc}$, the difference between these two canonical solutions is, in the high frequency regime ($kr\to\infty$), a circular wave emanating from $\bR$ of the form $d(\theta)\re^{\ri k r}/\sqrt{kr}$, where the angle-dependent amplitude $d(\theta)$ varies slowly across the incident/reflected shadow boundaries \cite{Oberhettinger}.

\begin{lem}[{\cite[\S8.2]{BoSeUs:87}}]
\label{def:udDef}
Let $(r,\theta)$ be polar coordinates with $r\in[0,\infty)$ and $\theta\in[0,2\pi)$. Let $u^d$ denote the solution to the problem of diffraction of the plane wave $u^i=\re^{\ri k \bx\cdot \bd}$ by the infinite knife edge $\left\{(r,0)\,:\,r\in[0,\infty)\right\}$ with Dirichlet boundary conditions. If $\theta=\alpha$ is the direction from which the incident wave arrives (as in Figure \ref{fig-1}(b)), then%
\[ %
u^d(r,\theta,\alpha) =E(r,\theta-\alpha) - E(r,\theta+\alpha),
\]%
where
$E(r,\psi) = \re^{-\ri kr\cos{\psi}}{\rm Fr}(-\sqrt{2kr}\cos(\psi/2))$,
and ${\rm Fr}$ is a Fresnel integral, defined as the improper integral
${\rm Fr}(\mu) = (\re^{-\ri\pi/4}/\sqrt{\pi}) \int_\mu^\infty \re^{\ri z^2} \,\rd z$.
We note that $E(r,\psi)$ is $4\pi$-periodic in $\psi$, and, by standard properties of ${\rm Fr}$ (cf.\ e.g.\ \cite[\S7]{DLMF}),
\begin{align*}
\label{}
E(r,\psi) \sim
\begin{cases}
\re^{-\ri k r\cos\psi} + \tilde{d}(\psi)\frac{\re^{\ri k r}}{\sqrt{kr}}\left(1+\ord{\frac{1}{kr}}\right), & \psi \in [(4n+1)\pi+\delta,(4n+3)\pi-\delta],\\
\tilde{d}(\psi)\frac{\re^{\ri k r}}{\sqrt{kr}}\left(1+\ord{\frac{1}{kr}}\right), & \psi \in [(4n-1)\pi+\delta,(4n+1)\pi-\delta],
\end{cases}
\end{align*}
as $kr\to \infty$, where $\tilde{d}(\psi) = -\re^{\ri \pi/4}/(2 \sqrt{2\pi}\cos{(\psi/2)})$, $n\in\Z$, $0<\delta<\pi$ is arbitrary and the approximations hold uniformly in $\psi$ in the stated intervals. The term $\re^{-\ri k r\cos\psi}$ represents a plane wave propagating from the direction $\psi=0$, and $\tilde{d}(\psi)\frac{\re^{\ri k r}}{\sqrt{kr}}$ represents a circular wave emanating from $r=0$ with directionality $\tilde{d}(\psi)$.
\end{lem}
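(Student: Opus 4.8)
The plan is to prove the two assertions of the lemma separately. The first is the closed-form expression $u^d = E(r,\theta-\alpha) - E(r,\theta+\alpha)$; this is the classical Sommerfeld half-plane solution, and I would establish it by verifying that the right-hand side solves the knife-edge Dirichlet diffraction problem, rather than re-deriving it from scratch. The second, and more substantial, assertion is the far-field expansion of $E$, which I would obtain from the large-argument asymptotics of the Fresnel integral ${\rm Fr}$.

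For the closed form I would check the four defining properties of $u^d$ in turn. First, that $E(r,\psi)$ satisfies the Helmholtz equation away from the edge: this is cleanest if one recognises $E$ as a Fresnel (Sommerfeld-integral) representation of a half-plane field, but it can also be verified by substituting $E(r,\psi)=\re^{-\ri kr\cos\psi}{\rm Fr}(-\sqrt{2kr}\cos(\psi/2))$ into the polar Helmholtz operator and using ${\rm Fr}'(\mu) = -(\re^{-\ri\pi/4}/\sqrt{\pi})\re^{\ri\mu^2}$. Second, the Dirichlet condition on the two faces $\theta=0$ and $\theta=2\pi$ of the screen: since $\cos$ is even, $E(r,\psi)$ is even in $\psi$, so at $\theta=0$ the two terms cancel, $E(r,-\alpha)-E(r,\alpha)=0$, and the cancellation at $\theta=2\pi$ follows from evenness together with the $4\pi$-periodicity. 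Third, that the field reduces to the prescribed incident plane wave in the illuminated region: taking $\bd = -(\cos\alpha,\sin\alpha)$, so that $\theta=\alpha$ is the arrival direction, gives $u^i=\re^{-\ri kr\cos(\theta-\alpha)}$, which is exactly the plane-wave term carried by $E(r,\theta-\alpha)$ on the sheet where it survives (the term $-E(r,\theta+\alpha)$ being the image contribution that enforces the boundary condition). Fourth, the radiation condition, which holds because all remaining contributions are outgoing circular waves $\sim \re^{\ri kr}/\sqrt{kr}$. Uniqueness of $u^d$ then follows from the standard combination of the radiation condition with an edge (Meixner) condition, which I would take from the cited reference.

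The analytic core is the expansion of $E$. Writing $\mu = -\sqrt{2kr}\cos(\psi/2)$, integration by parts in $\int_\mu^\infty \re^{\ri z^2}\,\rd z$, integrating the factor $2\ri z\,\re^{\ri z^2}$ against $1/(2\ri z)$, yields the exact identity $\int_\mu^\infty \re^{\ri z^2}\,\rd z = \ri\re^{\ri\mu^2}/(2\mu) + \int_\mu^\infty \re^{\ri z^2}/(2\ri z^2)\,\rd z$ for $\mu>0$, and, for $\mu<0$, the analogous identity for $\int_{-\infty}^\mu\re^{\ri z^2}\,\rd z$ obtained from ${\rm Fr}(\mu) = 1 - (\re^{-\ri\pi/4}/\sqrt{\pi})\int_{-\infty}^\mu\re^{\ri z^2}\,\rd z$ (which avoids the non-integrable singularity of $1/z^2$ at the origin and is the source of the constant term $1$); in each case the remaining integral is $\ord{|\mu|^{-3}}$. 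The key algebraic simplification is $\mu^2 = 2kr\cos^2(\psi/2) = kr(1+\cos\psi)$, so that $\re^{\ri\mu^2} = \re^{\ri kr}\re^{\ri kr\cos\psi}$ and the prefactor $\re^{-\ri kr\cos\psi}$ in $E$ cancels the $\re^{\ri kr\cos\psi}$, leaving a pure outgoing wave $\re^{\ri kr}/\sqrt{kr}$. The dichotomy is then governed entirely by the sign of $\cos(\psi/2)$, which changes at the shadow boundaries $\psi\in(2\Z+1)\pi$: where $\cos(\psi/2)>0$ one has $\mu\to-\infty$ and ${\rm Fr}\to 1$, so $E$ retains the plane wave $\re^{-\ri kr\cos\psi}$ plus a circular-wave correction; where $\cos(\psi/2)<0$ one has $\mu\to+\infty$ and ${\rm Fr}\to 0$, so only the circular wave survives. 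Carrying the constant $-\ri\re^{-\ri\pi/4} = -\re^{\ri\pi/4}$ through the computation reproduces exactly the stated amplitude $\tilde d(\psi) = -\re^{\ri\pi/4}/(2\sqrt{2\pi}\cos(\psi/2))$, identically in both regimes.

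The step I expect to be the main obstacle is making the $\ord{1/(kr)}$ error uniform in $\psi$. The natural expansion parameter is not $1/(kr)$ but $1/\mu^2 \sim 1/\bigl(kr\cos^2(\psi/2)\bigr)$, so the one-term asymptotics degrade as $\cos(\psi/2)\to 0$; correspondingly $\tilde d(\psi)$ blows up at the shadow boundaries, where the true transition layer has angular width $|\cos(\psi/2)|\sim (kr)^{-1/2}$ and ${\rm Fr}$ interpolates smoothly between $0$ and $1$. This is exactly why the statement is restricted to the closed subintervals that stay a distance $\delta$ away from $(2\Z+1)\pi$. To make the expansion rigorous I would bound the remaining integral uniformly for $|\mu|$ bounded below, i.e.\ for $|\cos(\psi/2)|\geq\sin(\delta/2)$, tracking the $\psi$-dependence of the constants explicitly; this is routine but is where the real care is needed.
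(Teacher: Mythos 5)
The paper offers no proof of this lemma to compare against: the closed form is imported verbatim from \cite{BoSeUs:87}, and the asymptotics are attributed to standard properties of ${\rm Fr}$ from \cite{DLMF}. So your verification-plus-integration-by-parts argument is the natural self-contained route, and its core computations are sound: the evenness/$4\pi$-periodicity argument for the Dirichlet condition on both faces, the identity $\mu^2=kr(1+\cos\psi)$ which converts $\re^{\ri\mu^2}$ into $\re^{\ri kr}\re^{\ri kr\cos\psi}$ and cancels the prefactor, the splitting ${\rm Fr}(\mu)=1-(\re^{-\ri\pi/4}/\sqrt{\pi})\int_{-\infty}^{\mu}\re^{\ri z^2}\,\rd z$ for $\mu<0$, and the constant-chasing that produces the same $\tilde d(\psi)$ in both regimes are all correct. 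Two small caveats: the claim that the remainder integral is $\ord{|\mu|^{-3}}$ requires one further integration by parts (direct estimation of $\int_\mu^\infty \re^{\ri z^2}/(2\ri z^2)\,\rd z$ gives only $\ord{|\mu|^{-1}}$), and the appeal to uniqueness via radiation-plus-Meixner conditions needs the geometrical-optics field (incident plus reflected plane waves) subtracted off first, since $u^d-u^i$ alone is not outgoing; both points are standard and are handled in the cited reference.

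The genuine problem is the one step you never carry out: mapping your dichotomy onto the intervals printed in the lemma. Your (correct) analysis places the plane-wave term exactly where $\cos(\psi/2)>0$, i.e.\ for $\psi\in((4n-1)\pi,(4n+1)\pi)$, and the pure circular wave where $\cos(\psi/2)<0$, i.e.\ for $\psi\in((4n+1)\pi,(4n+3)\pi)$. The lemma as printed asserts precisely the opposite assignment, so your computation does not ``reproduce'' the displayed cases --- it contradicts them. In fact the printed display is the one in error: at $\psi=0$ one has $E(r,0)=\re^{-\ri kr}\,{\rm Fr}(-\sqrt{2kr})\to\re^{-\ri kr}$ as $kr\to\infty$, a non-decaying plane wave (physically, $\psi=\theta-\alpha=0$ is the direction looking straight back at the source), yet $\psi=0$ lies in the interval $[(4n-1)\pi+\delta,(4n+1)\pi-\delta]$ (with $n=0$) that the lemma labels as circular-wave-only. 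So the two cases in the lemma's asymptotic formula have been swapped --- a typo that does not propagate into the rest of the paper, since elsewhere only the closed form for $u^d$ and exact properties of ${\rm Fr}$ are used (e.g.\ the identity $u^i-u^d=2\re^{\ri ky_2}h\bigl(\sqrt{2ky_2}\sin(\alpha/2)\bigr)$ on $\gamma'$ in the proof of Lemma \ref{v1tildeThm}, which your closed form reproduces). Your proposal therefore proves the corrected statement; as a proof of the literal statement it would fail at this final matching step, and a complete write-up should flag the swap explicitly rather than assert agreement.
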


The key result that we require for the design of our approximation space is the following theorem, which we prove in~\S\ref{NonConvProof}.
\begin{thm}
\label{Gamma2Thm}
Suppose that Assumption \ref{ass:2} holds. Then, on a nonconvex side $\Gamma_{\rm nc}$, %
\begin{align}
\label{dudnGamma2Rep}
\pdone{u}{\bn}(\bx(s))&=\Psi(\bx(s)) + v^+(L_{\rm nc}+s)\re^{\ri ks} +v^-(L_{\rm nc}-s) \re^{-\ri ks}+v(s)\re^{\ri kr}, %
\end{align}
for $s\in [0,L_{\rm nc}]$,
where $r=r(s)=\sqrt{s^2+L_{\rm nc}'^2}$ and
\begin{enumerate}[(i)]
\item
$\Psi:=2\pdonetext{u^d}{\bn}$ if $\pi/2\leq\alpha\leq3\pi/2,$ and $\Psi:=0$ otherwise;
\item the functions $v^\pm(s)$ are analytic in $\real{s}>0$; further, for every $k_0>0$ they satisfy the bounds \rf{vjpmBounds} for $k\geq k_0$, with $\delta^\pm = 1-\pi/\omega\in(0,1/2)$ and $C>0$ depending only on $\Omega$ and $k_0$;
\item the function $v(s)$ is analytic in the $k$-independent complex neighbourhood
$D_\eps:=\left\{s\in\mathbb{C} : \mathrm{dist}(s,[0,L_{\rm nc}])<\eps\right\}$ of $[0,L_{\rm nc}]$,
where
\begin{equation}
\label{eqn:eps_defn}
\eps:= L_{\rm nc}'/(32\sqrt{2}\,);%
\end{equation}
 further, where $k_1$ and $C_1$ are the constants from Assumption \ref{ass:1},
\begin{align}
\label{tildevEst}
\left|v(s)\right|&\leq C C_1 k\log^{1/2}(2+k), \qquad s\in D_\eps,\, k\geq k_1,
\end{align}
where $C>0$ depends only on $\Omega$ and $k_1$.
\end{enumerate}
\end{thm}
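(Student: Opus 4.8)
The plan is to follow the strategy behind Theorem~\ref{thm:1}, but to replace the half-plane $H$ used there (which for a nonconvex side no longer lies in $D$) by the quarter-plane $Q$ indicated in Figure~\ref{fig-1}(a), whose two faces are the extensions $\gamma,\gamma'$ of $\Gamma_{\rm nc}$ and $\Gamma_{\rm nc}'$ beyond $\bP$ and $\bR$. The visibility condition in Definition~\ref{classCDef}(ii) is exactly what guarantees $Q\subset D$, so that $u$ satisfies the Helmholtz equation throughout $Q$, vanishes on the parts $\Gamma_{\rm nc},\Gamma_{\rm nc}'$ of $\partial Q$ lying on $\Gamma$, and has $\gamma,\gamma'$ lying in $D$. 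Since $Q$ has opening angle $\pi/2$, its Dirichlet Green's function $G_Q$ is available explicitly by the method of images as a signed sum of four translates of $\Phi_k$. I would apply Green's second identity to $u^s$ and $G_Q(\bx,\cdot)$ over $Q$ intersected with a large disc, let the radius tend to infinity using the radiation condition, and use $G_Q=0$ on $\partial Q$ together with $u=0$ on $\Gamma_{\rm nc}\cup\Gamma_{\rm nc}'$. This gives, for $\bx\in Q$, a representation of the form $u(\bx)=u_{\rm can}(\bx)-\int_{\gamma\cup\gamma'}u(\by)\,\partial_{\bn(\by)}G_Q(\bx,\by)\,\rd s(\by)$, where $u_{\rm can}$ is a finite geometrical-optics sum (the incident wave and its three images). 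Because the right-angle images close up, $u_{\rm can}$ carries no diffracted wave from $\bQ$, consistent with the reentrant corner $\bQ$ being non-singular (field angle $\pi/2$ gives exponent $\pi/(\pi/2)=2$). Taking the normal derivative on $\Gamma_{\rm nc}$ then splits $\pdonetext{u}{\bn}$ into a canonical contribution, a $\gamma$-contribution associated with $\bP$, and a $\gamma'$-contribution associated with $\bR$, which I will match to the four terms of \rf{dudnGamma2Rep}.

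The canonical term together with the shadow-smoothing part of the $\gamma'$-contribution I would regroup, using Lemma~\ref{def:udDef} and the wedge/knife-edge relation recorded before it (see~\cite{Oberhettinger}), as the smooth knife-edge field, supplying $\Psi=2\pdonetext{u^d}{\bn}$; here the factor two comes from the image in the face $\Gamma_{\rm nc}$ and the illumination condition $\pi/2\le\alpha\le3\pi/2$ from the shadow geometry of Lemma~\ref{def:udDef}, while the residual circular wave from $\bR$ (of phase $\re^{\ri kr}$, smooth on $\Gamma_{\rm nc}$ since $r\ge L_{\rm nc}'>0$) is absorbed into $v(s)\re^{\ri kr}$. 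For the $\gamma$-integral I would argue exactly as in~\cite[\S3]{HeLaMe:11}: the image structure of $G_Q$ produces diffracted contributions from $\bP$ and from its reflection $\bP'$ in $\Gamma_{\rm nc}'$, carrying the phases $\re^{-\ri ks}$ and $\re^{+\ri ks}$; extracting these leaves $v^-(L_{\rm nc}-s)$ and $v^+(L_{\rm nc}+s)$, singular only at $\bP$. The Hankel-function analyticity argument of~\cite[Theorem~3.2]{HeLaMe:11} then gives analyticity of $v^\pm$ in $\real{s}>0$ and the bounds \rf{vjpmBounds} with $\delta^\pm=1-\pi/\omega$, the factor $\uM$ arising from bounding $u$ on $\gamma$ by $\sup_D|u|$.

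The genuinely new ingredient, and the step I expect to be the main obstacle, is the treatment of the $\gamma'$-integral (together with the circular-wave residual above), which I would reduce by extracting the phase $\re^{\ri kr}$ — the dominant phase, since $\bR$ is the point of $\gamma'$ nearest to $\Gamma_{\rm nc}$ — to define the amplitude $v(s)$. Two things must then be proved. First, the bound \rf{tildevEst}: here, unlike for $v^\pm$, bounding the factor $u^s=u-u^i$ on $\gamma'$ by $\uM$ is not what is wanted; instead I would insert the global representation \rf{RepThm}, writing $u^s(\by)=-\int_\Gamma\Phi_k(\by,\bz)\,\pdonetext{u}{\bn}(\bz)\,\rd s(\bz)$, and estimate by Cauchy--Schwarz, so that Assumption~\ref{ass:1} (which holds because Assumption~\ref{ass:2} is assumed, with $k_1=k_2$ and $C_1=C/C_2$) contributes the factor $C_1 k$ and the $L^2(\Gamma)$-norm of the logarithmically singular Hankel kernel contributes $\log^{1/2}(2+k)$. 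Second, the analyticity of $v$ in the full neighbourhood $D_\eps$: I would obtain this by deforming the $\gamma'$-contour and differentiating under the integral sign, the role of the specific choice $\eps=L_{\rm nc}'/(32\sqrt2)$ being to keep $s\mapsto r(s)=\sqrt{s^2+L_{\rm nc}'^2}$ uniformly away from its branch points $s=\pm\ri L_{\rm nc}'$ and $\bx(s)$ at positive distance from $\gamma'$, so that the integrand stays analytic and the domain of analyticity is $k$-independent. Carrying all constants through this last argument with explicit $k$-dependence is where the bulk of the technical work lies.
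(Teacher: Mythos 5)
Your proposal follows essentially the same route as the paper's own proof: Green's representation in the quarter plane $Q$ with the four-image Green's function, the convex-side argument of \cite{HeLaMe:11} for the $\gamma$-integral, the split $u=u^i+u^s$ on $\gamma'$ with the scattered-field contribution bounded via the representation formula \rf{RepThm}, Fubini, Cauchy--Schwarz and Assumption~\ref{ass:1} (producing the factor $C_1k\log^{1/2}(2+k)$), the incident-field contribution regrouped through the knife-edge solution $u^d$ to yield $\Psi=2\partial u^d/\partial\bn$, and analyticity of $v$ in a $k$-independent neighbourhood obtained by deforming the $\gamma'$-contour (the paper rotates it to $\{t\re^{\ri\pi/4}:t\geq 0\}$). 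The only machinery you leave implicit is supplied by the paper in Lemma~\ref{uRepThmQ}(ii) (a limiting point-source argument to represent the non-radiating $u^i$ in $Q$, with the image sum present only for $\pi\leq\alpha\leq3\pi/2$) and Proposition~\ref{udLem} (the half-plane Green's representation of $u^d$), but these are details of execution rather than differences of approach.
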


\begin{rem}
\label{rem:LO}
The representation \rf{dudnGamma2Rep} can be interpreted in terms of high frequency asymptotics as follows. The first term, $\Psi$, represents a uniform approximation to the leading order high frequency behaviour of $\pdonetext{u}{\bn}$ on $\Gamma_{\rm nc}$, in the form of a modified geometrical optics approximation; depending on the value of $\alpha$, this includes contributions from the incident wave (via $E(r,\theta-\alpha)$), and the reflection of the incident wave in $\Gamma_{\rm nc}'$ (via $E(r,\theta+\alpha)$), with the jump discontinuties of the geometrical optics approximation smoothed by the use of Fresnel integrals. The final term represents the contribution due to diffracted rays emanating from the corner $\bR$, and also compensates for use of the knife edge canonical solution in $\Psi$ rather than the wedge canonical solution (cf.\ the discussion before Lemma \ref{def:udDef}). The third term represents the contribution due to diffracted rays emanating from the corner $\bP$, and the second term represents the contribution due to ``diffracted-reflected'' rays emanating from the corner $\bP$ and being reflected at $\bQ$. These rays can be thought of as emanating from a non-physical ``image corner'' $\bP'$ (cf.\ Figure \ref{fig-1}(a)), obtained by the reflection of $\bP$ in $\Gamma_{\rm nc}'$. (Hence, while the third term is singular at $\bP$, the second term is not singular at either $\bP$ or $\bQ$.)%
\end{rem}

\section{Proof of Theorem \ref{Gamma2Thm}}
\label{NonConvProof}

We begin by outlining the structure of the proof of Theorem~\ref{Gamma2Thm}.
We adopt a similar methodology to that used in the proof of the corresponding result for convex sides, Theorem~\ref{thm:1}, although significant modifications and new ideas are needed to deal with the nonconvex geometry.
We begin (in Lemma~\ref{uRepThmQ}) by applying Green's representation theorem in the quarter plane $Q$ whose boundary extends the sides $\Gamma_{\rm nc}$ and $\Gamma_{\rm nc}'$, as illustrated in Figure~\ref{fig-1}(a). The Dirichlet Green's function for this domain is known explicitly (see \eqref{GFQP}) by the method of images. (This simple representation for the Green's function simplifies the calculations throughout this section; it is this which motivates the requirement in Definition~\ref{classCDef} that the exterior angles less than $\pi$ are exactly $\pi/2$.) This gives $\pdonetext{u}{\bn}$ on $\Gamma_{\rm nc}$ as a leading order term, plus the sum of integrals over the contours $\gamma$ and $\gamma'$ of Figure \ref{fig-1}(a); these integrals contain $u$ restricted to $\gamma$ or $\gamma'$ as a factor (see~\rf{dudnRepIntegral}).

We expect the integral over $\gamma$ to correspond to the field diffracted at $\bP$, and its subsequent reflection at $\bQ$. In fact, this integral can be analysed exactly as for a convex side, and gives rise to the terms $v^+(L_{\rm nc}+s)\re^{\ri ks}$ and $v^-(L_{\rm nc}-s) \re^{-\ri ks}$ in the representation~\rf{dudnGamma2Rep}. The analysis of the integral over $\gamma'$, which gives rise to the remaining terms in~\rf{dudnGamma2Rep}, corresponding to the field diffracted at $\bR$, is considerably more complicated.

To analyse the integral over $\gamma'$ we split it further, using the fact that $u=u^i+u^s$.  We consider the contribution from $u^s$ in Lemma~\ref{v2tildeThm} (proved in \S\ref{subsec:v2tildeThm}), where we extract the expected phase $\re^{\ri k r}$ and show that the remaining factor $\tW(s)$ can be analytically continued into the complex plane. This is the most technical part of the proof. First we substitute for $u^s$ using the representation theorem~\rf{RepThm}, which gives, after an application of Fubini's theorem, the representation \eqref{eqn:v2} as an integral around $\Gamma$ involving $\pdonetext{u}{\bn}$.   The next task is to show that $K(\cdot, \bz)$ in the integrand, given as the integral \eqref{eqn:Kxz} along $\gamma'$,  has an analytic continuation into a ($\bz$- and $k$-independent) neighbourhood of $[0,L_{\rm nc}]$, and to bound $K(\cdot, \bz)$ in this neighbourhood for $\bz\in \Gamma$ (Lemma \ref{KAnalCor}). To achieve this aim it is convenient first to show that one can deform the contour of integration $\gamma'$ in \eqref{eqn:Kxz} to a contour on which the integrand decays exponentially, obtaining the representation \eqref{KRepDeform}. To show these results we require auxiliary results, Lemmas~\ref{ImChiLem}, \ref{ImphiCor}, and \ref{SkLem}. In a final step we bound $\tW(s)$ in the complex plane via the application \eqref{VEstKdudn} of the Cauchy-Schwarz inequality, bounding $\|\pdonetext{u}{\bn}\|_{L^2(\Gamma)}$ using Assumption \ref{ass:1}. (It is precisely at this point where Assumption \ref{ass:1} is needed.)

We consider the contribution from $u^i$ in Lemma~\ref{v1tildeThm} (proved in~\S\ref{subsec:v1tildeThm}), where we apply a similar (but simpler) approach, making use of the tools developed in the proof of Lemma~\ref{v2tildeThm}.  There is one complication: when $\alpha\in(\pi/2,3\pi/2)$, in which case $\Gamma_{\rm nc }$ is partially (or fully) illuminated by the incident wave, we first have to subtract off the canonical solution $u^d$ from $u^i$. The analysis is completed by applying Green's representation theorem for $u^d$ in the half-plane $x_1<0$ (Proposition~\ref{udLem}).

We thus begin our proof of Theorem~\ref{Gamma2Thm} by deriving a representation formula for $u$ in the quarter-plane whose boundary contains the sides $\Gamma_{\rm nc}'$ and $\Gamma_{\rm nc}$. Let $\gamma:=\left\{(x_1,-L_{\rm nc}')\,:\,x_1<-L_{\rm nc}\right\}$ and $\gamma':=\left\{(0,x_2)\,:\,x_2>0\right\}$ denote the extensions of $\Gamma_{\rm nc}$ and $\Gamma_{\rm nc}'$, respectively (see Figure~\ref{fig-1}(a)). Then $\partial Q:=\gamma\cup\Gamma_{\rm nc}\cup\Gamma_{\rm nc}'\cup\gamma'$ is the boundary of the quarter-plane $Q:=\{(x_1,x_2) : x_1<0,\, x_2>-L_{\rm nc}' \}$ whose Dirichlet Green's function is, by the method of images,
\begin{align} \label{GFQP}
G_k(\bx,\by)&:=\Phi_k(\bx,\by)-\Phi_k(\bx,\by^*)-\Phi_k(\bx,\by')+\Phi_k(\bx,\by^*{}'),
\end{align}
where
$^*$ and $'$ are operations of reflection in the lines $\gamma\cup\Gamma_{\rm nc}$ and $\Gamma_{\rm nc}'\cup\gamma'$, respectively. For reference, the incident wave and its reflections in the extensions of these lines (assuming a sound-soft boundary condition~(\ref{eqn:bc1})) are given explicitly by
\begin{align*}
\label{}
u^i(\bx) &= \exp{(\ri k(-x_1\sin{\alpha} + x_2 \cos{\alpha}))},\\
(u^i)^*(\bx) &= -\exp{(\ri k(-x_1\sin{\alpha} - (x_2+2L_{\rm nc}') \cos{\alpha}))},\\
(u^i)'(\bx)& = -\exp{(\ri k(x_1\sin{\alpha} + x_2 \cos{\alpha}))},\\
(u^i)^*{}'(\bx) &= \exp{(\ri k(x_1\sin{\alpha} - (x_2+2L_{\rm nc}') \cos{\alpha}))}.
\end{align*}
We also recall that $\bn$ is the unit normal directed into $D$, i.e.\ into the interior of $Q$.

We then have the following representation formulae:
\begin{lem}
\label{uRepThmQ}
\begin{enumerate}[(i)]
\item \ \vs{-7}
\begin{align}
\label{usRepQ}
u^s(\bx)=\int_{\partial Q}{\frac{\partial G_k(\bx,\by)}{\partial \bn(\by)}u^s(\by)\,\rd s(\by)}, \qquad \bx\in Q;
\end{align}
\item \ \vs{-7}
\begin{align}
\label{uiRepQ}
u^i(\bx ) =\Psi_1(\bx )+\int_{\partial Q}\frac{\partial G_k(\bx ,\by )}{\partial \bn(\by )}u^i(\by )\,\rd s(\by ), \qquad \bx \in Q,
\end{align}
where, for  $\pi\leq\alpha\leq 3\pi/2$,
\begin{eqnarray*}
\Psi_1(\bx)& := &
u^i(\bx )+ (u^i)^*(\bx ) +(u^i)'(\bx )+ (u^i)^*{}'(\bx )\\
&=&4\exp(-\ri kL_{\rm nc}'\cos{\alpha})\,\sin\left(kx_1\sin\alpha\right)\sin\left(k(x_2+L_{\rm nc}')\cos\alpha\right),
\end{eqnarray*}
while $\Psi_1(\bx) := 0$, otherwise;
\item \ \vs{-7}
\begin{align*}
\label{}
u(\bx ) = \Psi_1(\bx )+\int_{\gamma\cup \gamma'}\frac{\partial G_k(\bx ,\by )}{\partial \bn(\by )}u(\by )\,\rd s(\by ), \qquad \bx \in Q.
\end{align*}
\end{enumerate}
\end{lem}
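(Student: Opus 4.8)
I would prove all three parts from a single device: Green's second identity in the truncated quarter--plane $Q_R:=Q\cap B_R(\bR)$, combined with the defining properties of $G_k$. As a function of $\by$, $G_k(\bx,\cdot)$ solves the Helmholtz equation in $Q\setminus\{\bx\}$ with a $\delta$--singularity at $\by=\bx$, vanishes on $\partial Q$, and---its image sources lying at fixed points---satisfies the radiation condition as $|\by|\to\infty$. The visibility condition~(ii) of Definition~\ref{classCDef} ensures $\overline\Omega\cap Q=\emptyset$, so $Q\subset D$ and $u,u^s,u^i$ all satisfy the Helmholtz equation throughout $Q$; since $u^s\in H^1_{\rm loc}(D)$ with $\pdonetext{u}{\bn}\in L^2(\Gamma)$ and the fields are smooth on the interior contours $\gamma,\gamma'$ and on $\partial B_R$, the identity is justified on each bounded Lipschitz domain $Q_R$. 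With $\bn$ the inward normal of $Q$ (outward normal $-\bn$), applying the identity to a field $w$ and $G_k(\bx,\cdot)$ and using $G_k|_{\partial Q}=0$ reduces the straight--boundary contribution to $\int_{\partial Q\cap B_R}\pdone{G_k}{\bn}w\,\rd s$, leaving only a circular--arc term $I_R^w(\bx)$ over $\partial B_R\cap Q$ to be controlled as $R\to\infty$.

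For part~(i), with $w=u^s$ the singularity of $G_k$ reproduces $u^s(\bx)$, and since both $u^s$ and $G_k(\bx,\cdot)$ are outgoing, the arc term $I_R^{u^s}$ vanishes as $R\to\infty$ by the standard radiation--condition estimate. Letting $R\to\infty$ yields \eqref{usRepQ}, the integral over the unbounded $\partial Q$ being understood as the (conditionally convergent) limit of its truncations.

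Part~(ii) is the crux, and where the two fields differ. For $w=u^i$ the identity again reproduces $u^i(\bx)$, but $u^i$ does \emph{not} radiate, so the arc term survives; I would show $I_R^{u^i}\to\Psi_1(\bx)$. Substituting the far--field expansions of the four image terms in $G_k$ and of $u^i$ on $\partial B_R\cap Q$, each arc integral reduces to an oscillatory integral in the polar angle whose leading ($R$--order) phase is the common factor $kR(1+\hat\by\cdot\bd)$; its only stationary point with non--vanishing amplitude lies at $\hat\by=-\bd$, i.e.\ in the direction from which $u^i$ arrives. Evaluating each contribution by stationary phase: when this direction lies in the angular sector subtended by $Q$ (the right angle between $\gamma'$ and $\gamma$), equivalently when $\pi\le\alpha\le3\pi/2$, the $j$--th term tends to the corresponding image plane wave at $\bx$ (the bounded $O(1)$ phases in the amplitude supplying exactly the reflected phase shifts), and otherwise tends to $0$. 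Summing with the image signs of \eqref{GFQP} reproduces $u^i+(u^i)^*+(u^i)'+(u^i)^*{}'=\Psi_1$ on $[\pi,3\pi/2]$ and $0$ elsewhere, giving \eqref{uiRepQ}. This is the quarter--plane analogue of the half--plane computation underlying Theorem~\ref{thm:1} and \cite{HeLaMe:11,Convex}. A useful check is that $\Psi_1$ manifestly solves the Helmholtz equation in $Q$ and vanishes on $\partial Q$, by its factored $\sin(kx_1\sin\alpha)\sin(k(x_2+L_{\rm nc}')\cos\alpha)$ form.

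Finally, part~(iii) follows by addition: combining \eqref{usRepQ} and \eqref{uiRepQ} and using $u=u^i+u^s$ gives $u(\bx)=\Psi_1(\bx)+\int_{\partial Q}\pdone{G_k}{\bn}u\,\rd s$, and the sound--soft condition~\eqref{eqn:bc1} forces $u=0$ on $\Gamma_{\rm nc}\cup\Gamma_{\rm nc}'$, so these portions drop out and only the integrals over the extensions $\gamma,\gamma'$ remain, as asserted. The main obstacle is the rigorous arc evaluation in part~(ii): one must justify the far--field expansions uniformly along the arc, apply stationary phase with the $\bx$--dependent amplitude, confirm that all four image contributions switch on over precisely the same interval $[\pi,3\pi/2]$, and handle the grazing endpoints---while also making sense of the only conditionally convergent boundary integrals over the unbounded contours $\gamma,\gamma'$ as truncation limits.
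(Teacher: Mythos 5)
Your parts (i) and (iii) coincide with the paper's own argument: truncate $Q$ to $Q_R$, apply Green's representation theorem, use $G_k=0$ on $\partial Q$ together with the radiation conditions satisfied by both $G_k(\bx,\cdot)$ and $u^s$ to kill the arc term, and then obtain (iii) by adding (i) and (ii) and invoking $u=0$ on $\Gamma$. For part (ii), however, you take a genuinely different and valid route. You keep the plane wave $u^i$ in Green's identity and evaluate the surviving arc term asymptotically: the combination $\pdone{G_k}{\bn}u^i-G_k\pdone{u^i}{\bn}$ carries the factor $(1-\hat\by\cdot\bd)$, which annihilates the forward stationary point $\hat\by=\bd$, leaving only $\hat\by=-\bd$; this direction lies in the aperture of $Q$ precisely when $\alpha\in(\pi,3\pi/2)$, and the four image amplitudes evaluated there reproduce $u^i+(u^i)^*+(u^i)'+(u^i)^*{}'=\Psi_1$. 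The paper instead sidesteps stationary phase entirely: it replaces $u^i$ by the normalised point source $u^i_R(\cdot):=C_R\Phi_k(\cdot,\bx_R)$ with $\bx_R:=-R\bd$ and $C_R:=\re^{-\ri\pi/4}\sqrt{8\pi kR}\,\re^{-\ri kR}$, which \emph{does} satisfy the radiation condition, so the argument of part (i) applies verbatim; the only dichotomy is then the exact geometric one of whether the source $\bx_R$ lies inside $Q$ (iff $\alpha\in(\pi,3\pi/2)$), in which case the representation acquires the extra term $C_R G_k(\bx,\bx_R)$. Letting $R\to\infty$, dominated convergence gives $u^i_R\to u^i$ on $\partial Q$ and $C_R G_k(\bx,\bx_R)\to\Psi_1(\bx)$, and the grazing cases $\alpha=\pi,\,3\pi/2$ are settled by continuity in $\alpha$ rather than by endpoint stationary-phase analysis (note $\Psi_1\equiv 0$ at grazing incidence by pairwise cancellation of the images, so both treatments agree). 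What each approach buys: yours is classical and physically transparent, exhibiting $\Psi_1$ as the contribution of the incoming direction at infinity, but every item you list as the ``main obstacle'' (uniform far-field expansions on the arc, the amplitude cancellation at $\hat\by=\bd$, the common switch-on interval for all four images, the endpoints) requires genuine work; the paper's point-source device compresses all of that into a pointwise limit of Hankel-function asymptotics plus dominated convergence, with the switch-on interval appearing as an exact statement valid for every finite $R$ rather than only asymptotically.
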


\begin{proof}
(i) For $R>0$ define $Q_R:=\left\{\by\in Q:\,\left|\by\right|<R\right\}$, with boundary $\partial Q_R$. %
By Green's theorem and Green's representation theorem \cite[Theorems 2.19, 2.20]{ChGrLaSp:11},%
\begin{align}
\label{usRepGammar}
u^s(\bx )=\int_{\partial Q_R}\left(\pdone{G_k(\bx ,\by )}{\bn(\by )}u^s(\by ) - G_k(\bx ,\by )\pdone{u^s}{\bn}(\by )\right)\,\rd s(\by ), \qquad \bx \in Q_R,
\end{align}
where the normal $\bn$ is directed into the interior of $Q_R$. Then, since $G_k(\bx ,\by )=0$ on $\partial Q$ and both $G_k(\bx, \cdot)$ and $u^s$ satisfy the Sommerfeld radiation condition, \rf{usRepQ} is obtained from \rf{usRepGammar} by taking the limit $R\to\infty$ (see, e.g., \cite[Theorem~3.3]{CoKr:83}).

(ii) For $R>0$ define $\bx _R:=-R\bd = R(\sin{\alpha},-\cos{\alpha})$, and $u^i_R(\bx ):=C_R \Phi_k(\bx ,\bx_R)$, where $C_R:=\re^{-\ri \pi/4}\sqrt{8\pi kR}\re^{-\ri kR}$.
Note that, for fixed $R$, $u^i_R(\bx )$ satisfies the Sommerfeld radiation condition as $|\bx |\to\infty$, but, for fixed $\bx $, $u^i_R(\bx )\to u^i(\bx )$ as $R\to\infty$. If $\alpha\not\in[\pi,3\pi/2]$, then $u^i_R(\bx )$ is regular in $Q$, and, arguing as in part~(i),
\begin{align}
\label{uiREqn}
u^i_R(\bx )=\int_{\partial Q}{\frac{\partial G_k(\bx ,\by )}{\partial \bn(\by )}u^i_R(\by )\,\rd s(\by )}, \qquad \bx \in Q.
\end{align}
If $\alpha\in(\pi,3\pi/2)$, $u^i_R(\bx )$ is singular at $\bx =\bx_R\in Q$, and \rf{uiREqn} must be modified to
\begin{align}
\label{uiREqn2}
u^i_R(\bx )=C_RG_k(\bx ,\bx_R) + \int_{\partial Q}{\frac{\partial G_k(\bx ,\by )}{\partial \bn(\by )}u^i_R(\by )\,\rd s(\by )}, \qquad \bx \in Q,\, \bx \neq \bx_R.
\end{align}
By the dominated convergence theorem, formula \rf{uiRepQ} is then obtained by letting $R\to\infty$ in \rf{uiREqn} and \rf{uiREqn2}, since, for fixed $\bx $, $C_RG_k(\bx ,\bx_R)$ tends to $u^i(\bx )+ (u^i)^*(\bx ) + (u^i)'(\bx )+ (u^i)^*{}'(\bx )$ as $R\to\infty$. %
The result for $\alpha=\pi$ and $3\pi/2$ follows by taking the limits $\alpha\to \pi$ and $\alpha\to 3\pi/2$ in \rf{uiRepQ}.

(iii) This is a trivial consequence of (i) and (ii) and the fact that $u=0$ on $\Gamma$.
\end{proof}

As a consequence of Lemma \ref{uRepThmQ}(iii) we have that
\begin{align}
\label{dudnRepIntegral}
\pdone{u}{\bn}(\bx ) = \pdone{\Psi_1}{\bn}(\bx )+\int_{\gamma\cup \gamma'}\frac{\partial^2 G_k(\bx ,\by )}{\partial \bn(\bx )\partial \bn(\by )}u(\by )\,\rd s(\by ), \qquad \bx \in \Gamma_{\rm nc}.
\end{align}
Theorem~\ref{Gamma2Thm} follows from a careful analysis of the integral in~\rf{dudnRepIntegral}. The terms
$v^+(L_{\rm nc}+s)\re^{\ri ks}$ and $v^-(L_{\rm nc}-s) \re^{-\ri ks}$ in the representation~\rf{dudnGamma2Rep} arise from the integral over $\gamma$. Indeed, noting that
\begin{align*}
\pdtwomix{G_k(\bx ,\by ) }{\bn(\bx )}{\bn(\by )}&= 2\pdtwomix{\Phi_k(\bx ,\by )}{\bn(\bx )}{\bn(\by )} - 2\pdtwomix{\Phi_k(\bx ,\by')}{\bn(\bx )}{\bn(\by )}, \quad \bx \in\Gamma_{\rm nc}, \,\,\by \in\gamma,
\end{align*}
where $\by':=(-y_1,y_2)$, we find that, for $ \bx \in\Gamma_{\rm nc}$,
\begin{align}
\label{gamma2Decomp}
\int_{\gamma}\frac{\partial^2 G_k(\bx ,\by )}{\partial \bn(\bx )\partial \bn(\by )}u(\by )\,\rd s(\by ) =
2\int_{\gamma}\frac{\partial^2 \Phi_k(\bx ,\by )}{\partial \bn(\bx )\partial \bn(\by )}u(\by )\,\rd s(\by ) -
2\int_{\tilde{\gamma}}\frac{\partial^2 \Phi_k(\bx ,\by )}{\partial \bn(\bx )\partial \bn(\by )}u(\by')\,\rd s(\by ),
\end{align}
with $\tilde{\gamma}:=\left\{(x_1,-L_{\rm nc}')\,:\,x_1>L_{\rm nc}\right\}$. This expression is very similar to that encountered in the derivation of the regularity results on a convex side. Indeed, arguing almost exactly as in \cite[\S3]{HeLaMe:11} (and see also \cite[\S3]{Convex}), it can be shown from \rf{gamma2Decomp} that
\begin{align*}
\label{}
\int_{\gamma}\frac{\partial^2 G_k(\bx ,\by )}{\partial \bn(\bx )\partial \bn(\by )}u(\by )\,\rd s(\by ) = v^-(L_{\rm nc}-s)\re^{-\ri ks} + v^+(L_{\rm nc}+s)\re^{\ri ks}, \quad \bx(s) \in\Gamma_{\rm nc},
\end{align*}
where $v^\pm(s)$ are analytic in $\real{s}>0$, where they satisfy the bounds (\ref{vjpmBounds}) with $\delta^\pm = 1-\pi/\omega$. This is the assertion in paragraph~(ii) of Theorem \ref{Gamma2Thm}.

We now consider the integral over $\gamma'$ in \rf{dudnRepIntegral}. Noting that
\begin{align*}
\pdtwomix{G_k(\bx ,\by ) }{\bn(\bx )}{\bn(\by )}&= -4\pdtwomix{\Phi_k(\bx ,\by )}{x_2}{y_1}, \qquad \bx \in\Gamma_{\rm nc},\,\, \by \in\gamma',
\end{align*}
and using the decomposition $u=u^i+u^s$, we have, for $ \bx \in\Gamma_{\rm nc}$, that
\begin{align}
\label{gamma1Int}
\int_{\gamma'}\frac{\partial^2 G_k(\bx ,\by )}{\partial \bn(\bx )\partial \bn(\by )}u(\by )\,\rd s(\by ) =-4\int_{\gamma'}\pdtwomix{\Phi_k(\bx ,\by )}{x_2}{y_1}u^i(\by )\,\rd s(\by ) -4\int_{\gamma'}\pdtwomix{\Phi_k(\bx ,\by )}{x_2}{y_1}u^s(\by )\,\rd s(\by ).
\end{align}
The assertions in paragraphs (i) and (iii) of Theorem \ref{Gamma2Thm} then follow from  \rf{dudnRepIntegral}, \rf{gamma1Int} and Lemmas~\ref{v1tildeThm} and~\ref{v2tildeThm} below.

\begin{lem}
\label{v1tildeThm}
For $\bx =(-s,-L_{\rm nc}')\in\Gamma_{\rm nc}$,
\begin{align*}
-4\int_{\gamma'}\pdtwomix{\Phi_k(\bx ,\by )}{x_2}{y_1}u^i(\by )\,\rd s(\by ) = \Psi(\bx ) - \pdone{\Psi_1}{\bn}(\bx ) + \re^{\ri kr} \sW(s),
\end{align*}
where $\sW(s)$ is analytic in $D_\eps$, with $\eps$ given by \eqref{eqn:eps_defn}; further, for every $k_0>0$,
\begin{align*}
\left|\sW(s)\right|&\leq Ck^{1/2}, \qquad s\in D_\eps,\, k\geq k_0,
\end{align*}
 where $C>0$ depends only on $\Omega$ and $k_0$.
\end{lem}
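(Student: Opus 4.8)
The plan is to isolate the geometrical-optics content of the integral by comparison with the knife-edge canonical solution $u^d$, and then to treat what remains as a diffracted wave, to which the contour-deformation machinery developed for Lemma~\ref{v2tildeThm} applies. On $\gamma'=\{(0,y_2):y_2>0\}$ the incident wave is the explicit plane wave $u^i(\by)=\re^{\ri ky_2\cos\alpha}$, and the point $\bx=(-s,-L_{\rm nc}')$ sits at distance $r=\sqrt{s^2+L_{\rm nc}'^2}=|\bx-\bR|$ from the corner $\bR$. First I would split $u^i=u^d+(u^i-u^d)$ and write the left-hand side as $I_{\rm d}(\bx)+I_\Delta(\bx)$ accordingly. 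The reason for subtracting $u^d$ is that $\gamma'$ lies in the region directly illuminated by $u^i$ but reached by no reflection in the screen $\{(0,x_2):x_2\le0\}$ extending $\Gamma_{\rm nc}'$; hence on $\gamma'$ one has $u^d=u^i+(\text{diffracted field from }\bR)$, so that $u^i-u^d$ is a purely diffracted field decaying like $\re^{\ri ky_2}/\sqrt{ky_2}$ (Lemma~\ref{def:udDef}). This is exactly what removes the geometrical-optics (stationary-phase) obstruction that would otherwise prevent deforming the contour in $I_\Delta$.

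For the term $I_{\rm d}$ I would apply Green's representation theorem for $u^d$ in the half-plane $x_1<0$ (Proposition~\ref{udLem}). Since $u^d$ vanishes on the screen, this representation collapses to a single integral over $\gamma'$ against the half-plane Dirichlet Green's function, whose normal derivative there equals $-2\,\pdonet{\Phi_k}{y_1}$; differentiating in $x_2$ and using $\Psi=2\pdonet{u^d}{\bn}$ on $\Gamma_{\rm nc}$ reproduces the prefactor $-4$ and yields $I_{\rm d}(\bx)=\Psi(\bx)-2\pdonet{\Psi_1^{\rm d}}{x_2}(\bx)$, where $\Psi_1^{\rm d}=u^i+(u^i)'$ is the incident-plus-image term supplied by the limiting point-source argument of Lemma~\ref{uRepThmQ}(ii) (present only when the image source lies in $x_1<0$). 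A direct computation with the explicit plane-wave formulae of Lemma~\ref{uRepThmQ}(ii) then shows that on $\Gamma_{\rm nc}$, where $x_2=-L_{\rm nc}'$, the identity $2\pdonet{\Psi_1^{\rm d}}{x_2}=\pdonet{\Psi_1}{\bn}$ holds (indeed $\Psi_1$ itself vanishes on $\Gamma_{\rm nc}$, but its normal derivative does not); hence $I_{\rm d}(\bx)=\Psi(\bx)-\pdonet{\Psi_1}{\bn}(\bx)$, the two geometrical-optics terms of the statement. In the shadow range $\alpha\notin[\pi/2,3\pi/2]$ both $\Psi$ and $\Psi_1^{\rm d}$ vanish, no subtraction is made, and $I(\bx)$ is itself the remainder $I_\Delta$.

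It then remains to show $I_\Delta(\bx)=\re^{\ri kr}\sW(s)$ with $\sW$ analytic in $D_\eps$ and $|\sW(s)|\le Ck^{1/2}$, and here I would reuse wholesale the tools from the proof of Lemma~\ref{v2tildeThm}. After factoring out the phase $\re^{\ri kr}$ of the wave emanating from $\bR$, I would deform $\gamma'$ off the positive axis into the complex $y_2$-plane. The total phase $|\bx-\by|+y_2$ of the integrand has derivative $(y_2+L_{\rm nc}')/|\bx-\by|+1>0$, so there is no stationary point and the contour can be pushed into the upper half-plane so that the integrand decays exponentially, the dominant contribution coming from the endpoint $y_2=0$ at $\bR$, which carries precisely the phase $\re^{\ri kr}$. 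Since $\Phi_k(\bx,\by)$ is analytic in $\bx$ away from $\by$, and the choice $\eps=L_{\rm nc}'/(32\sqrt2)$ in~\rf{eqn:eps_defn} keeps $\bx$, and with it the branch point $\bx=\by$, uniformly away from the deformed contour for every $s\in D_\eps$, this gives analyticity of $\sW$ in $D_\eps$. The main obstacle is precisely this step: one must choose a single complex contour that is simultaneously admissible (integrand analytic in $s$, branch point avoided) and exponentially decaying, uniformly for all $s\in D_\eps$.

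Finally, estimating on the deformed contour gives the bound. The two derivatives $\pdonet{}{x_2}$ and $\pdonet{}{y_1}$ contribute a factor $k$ relative to the single-layer kernel, while the $k^{-1/2}$ amplitude of the diffracted density $u^i-u^d$ together with the $\ord{k^{-1/2}}$ gained from the integrable $y_2^{-1/2}$ endpoint singularity at $\bR$ (transparent after the rescaling $\tau=ky_2$) produces the net $\ord{k^{1/2}}$. Crucially, because $u^i-u^d$ is an explicit, $k$-uniformly bounded diffracted wave, no appeal to Assumption~\ref{ass:1} and no $L^2$-to-pointwise estimate is required; this is why $\sW$ enjoys the sharper bound $Ck^{1/2}$, without the $\log^{1/2}(2+k)$ factor that appears in the bound on $\tW$ in Lemma~\ref{v2tildeThm}.
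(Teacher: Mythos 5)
Your proposal is correct and follows essentially the same route as the paper's proof: the same splitting $u^i = u^d + (u^i-u^d)$ when $\alpha\in(\pi/2,3\pi/2)$ (with no splitting in the shadow case), the same use of Proposition~\ref{udLem} — Green's representation for $u^d$ in the half-plane $x_1<0$, combined with the identity $2\,\pdonet{\Psi_2}{\bn}=\pdonet{\Psi_1}{\bn}$ on $\Gamma_{\rm nc}$ — to produce the terms $\Psi-\pdonet{\Psi_1}{\bn}$, and the same contour deformation to $\{t\re^{\ri\pi/4}:t\geq0\}$ reusing the machinery of Lemmas~\ref{ImphiCor}--\ref{SkLem} for the remainder. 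Only your closing power count is slightly off (the two derivatives cost $k^2$, not $k$; the paper obtains $Ck^{1/2}$ from the kernel bound $\ord{k^{3/2}}$ times the $\ord{k^{-1}}$ gained from exponential decay on the deformed contour, using only the boundedness, not the decay, of the Fresnel factor $h$), but this heuristic slip does not affect the validity of the argument.
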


\begin{lem}
\label{v2tildeThm}
If Assumption \ref{ass:1} holds, then, for $\bx =(-s,-L_{\rm nc}')\in\Gamma_{\rm nc}$,
\begin{align*}
-4\int_{\gamma'}\pdtwomix{\Phi_k(\bx ,\by )}{x_2}{y_1}u^s(\by )\,\rd s(\by ) = \re^{\ri kr} \tW(s),
\end{align*}
where $\tW(s)$ is analytic in $D_\eps$, with $\eps$ given by \eqref{eqn:eps_defn}; further,
\begin{align}
\label{VEst}
\left|\tW(s)\right|&\leq CC_1k\log^{1/2}(2+k), \qquad s\in D_\eps ,\, k\geq k_1,
\end{align}
where $k_1$ and $C_1$ are as in Assumption \ref{ass:1} and $C>0$ depends only on $\Omega$ and $k_1$.
\end{lem}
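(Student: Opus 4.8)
The plan is to turn the integral over $\gamma'$ into an integral over $\Gamma$ against the density $\pdonet{u}{\bn}$, and then to estimate the result by Cauchy--Schwarz, invoking Assumption~\ref{ass:1}. First I would insert Green's representation formula \rf{RepThm}, written as $u^s(\by)=-\int_\Gamma\Phi_k(\by,\bz)\pdonet{u}{\bn}(\bz)\,\rd s(\bz)$, into the integrand and interchange the order of integration by Fubini's theorem. This recasts the left-hand side as $\int_\Gamma K(s,\bz)\,\pdonet{u}{\bn}(\bz)\,\rd s(\bz)$, where
\[
K(s,\bz)=4\int_{\gamma'}\pdtwomix{\Phi_k(\bx,\by)}{x_2}{y_1}\,\Phi_k(\by,\bz)\,\rd s(\by),\qquad \bx=(-s,-L_{\rm nc}').
\]
For $\by\in\gamma'$ the factor $\Phi_k(\bx,\by)$ oscillates with the distance $|\bx-\by|$, whose minimum over $\by\in\gamma'$ is attained at the endpoint $\bR$ at the origin; this singles out the phase $\re^{\ri kr}$ with $r=\sqrt{s^2+L_{\rm nc}'^2}=|\bx-\bR|$. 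Since $r$ depends on $s$ but not on $\bz$, I would factor it out and write $\tW(s)=\int_\Gamma\tilde K(s,\bz)\,\pdonet{u}{\bn}(\bz)\,\rd s(\bz)$ with the de-oscillated kernel $\tilde K(s,\bz):=\re^{-\ri kr}K(s,\bz)$.

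The heart of the matter is then to prove, for each fixed $\bz\in\Gamma$, that $\tilde K(\cdot,\bz)$ continues analytically to the $k$- and $\bz$-independent neighbourhood $D_\eps$ of $[0,L_{\rm nc}]$, with an accompanying bound on $\normt{\tilde K(s,\cdot)}{L^2(\Gamma)}$ that is uniform for $s\in D_\eps$. Analyticity in $s$ is inherited from that of $H_0^{(1)}$ away from the origin, once one verifies that the defining $\by$-integral still converges after continuation; the precise radius $\eps=L_{\rm nc}'/(32\sqrt2)$ is chosen to keep $\bx(s)$ a fixed distance from $\gamma'$, and hence away from the singularity of $\Phi_k$, as $s$ leaves the real axis. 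To secure convergence and the size estimate I would deform $\gamma'$ (the positive $x_2$-axis) into the complex plane onto a path on which the total phase $k(|\bx-\by|+|\by-\bz|)$ has strictly positive imaginary part, so that the integrand decays exponentially in $|\by|$; this demands quantitative control of the imaginary parts of the two distance functions along the deformed contour, together with a single-layer-type bound to handle the near-field contribution where $\by$ approaches $\bR$.

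Finally I would estimate $\left|\tW(s)\right|\le\normt{\tilde K(s,\cdot)}{L^2(\Gamma)}\,\normt{\pdonet{u}{\bn}}{L^2(\Gamma)}$ by Cauchy--Schwarz and use Assumption~\ref{ass:1} to replace the last factor by $C_1 k$ for $k\ge k_1$, so that \rf{VEst} follows once $\normt{\tilde K(s,\cdot)}{L^2(\Gamma)}\le C\log^{1/2}(2+k)$. The logarithmic factor is what I expect to be the main obstacle: it should originate at the corner $\bR$, where for $\bz\in\Gamma$ near $\bR$ both $|\bx-\by|$ and $|\by-\bz|$ can be small simultaneously for $\by$ near the origin of $\gamma'$, so that the $L^2(\Gamma)$ mass of $\tilde K$ accumulates a single power of $\log k$ under the square root. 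Establishing this bound uniformly in $s\in D_\eps$, with the phase $\re^{\ri kr}$ already extracted, is the crux of the argument; the contour-deformation and imaginary-part estimates above are exactly the machinery needed to make the complex-$s$ integral defining $\tilde K$ both convergent and small enough to produce the stated growth.
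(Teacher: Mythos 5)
Your proposal follows essentially the same route as the paper's proof: substituting the representation formula \rf{RepThm} and applying Fubini to get $\tW(s)=\int_\Gamma K(s,\bz)\,\pdonet{u}{\bn}(\bz)\,\rd s(\bz)$, extracting the phase $\re^{\ri kr}$, deforming $\gamma'$ onto a complex ray (the paper uses $\{t\re^{\ri\pi/4}:t\geq 0\}$) on which the phase has positive imaginary part so that the de-oscillated kernel continues analytically to $D_\eps$ with uniform bounds, and finishing with Cauchy--Schwarz and Assumption~\ref{ass:1}. You also correctly identify the origin of the $\log^{1/2}(2+k)$ factor, which in the paper arises from bounding $\normt{K(s,\cdot)}{L^2(\Gamma)}$ on the sides adjacent to $\bR$, where $|\bz|$ can be of order $1/k$.
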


We begin by proving Lemma~\ref{v2tildeThm}.  Some of the intermediate results derived in this proof will be used again in the proof of Lemma~\ref{v1tildeThm}.

\subsection{Proof of Lemma \ref{v2tildeThm}}
\label{subsec:v2tildeThm}
For $\bx =(-s,-L_{\rm nc}')\in\Gamma_{\rm nc}$ we have $r=r(s)=\sqrt{s^2+L_{\rm nc}'^2}$. Thus, to prove Lemma~\ref{v2tildeThm} we have to show that
\begin{align*}
 \tW(s):= -4\exp\left(-\ri k\sqrt{s^2+L_{\rm nc}'^2}\right)\, \int_{\gamma'}\pdtwomix{\Phi_k(\bx ,\by )}{x_2}{y_1}u^s(\by )\,\rd s(\by )
\end{align*}
is analytic in $D_\eps$, satisfying the bound \eqref{VEst}. Substituting for $u^s$ using \rf{RepThm}, and switching the order of integration, justified by Fubini's theorem, gives
\begin{align}
\tW(s)&= \int_{\Gamma}K(s,\bz )\frac{\partial u}{\partial \bn}(\bz )\,\rd s(\bz ), \label{eqn:v2}
\end{align}
where, for $s\in \mathbb{R}$ and $\bz\in \Gamma$,
\begin{align}
K(s,\bz ):=4\exp\left(-\ri k\sqrt{s^2+L_{\rm nc}'^2}\right)\,\int_0^\infty\frac{\partial^2\Phi_k\left((-s,-L_{\rm nc}'),(0,y_2)\right)}{\partial x_2\partial y_1}\Phi_k((0,y_2),\bz)\,\rd y_2, \label{eqn:Kxz}
\end{align}
and, by the recurrence and differentiation formulae for Hankel functions \cite[\S10.6]{DLMF},
\begin{align*}
\Phi_k((0,y_2),\bz)&=\frac{\ri }{4}H_0^{(1)}\left(k\sqrt{z_1^2+(y_2-z_2)^2}\right),\\
\frac{\partial^2\Phi_k((-s,-L_{\rm nc}'),(0,y_2))}{\partial x_2\partial y_1}&=-\frac{\ri k^2s(L_{\rm nc}'+y_2)}{4\left(s^2+(L_{\rm nc}'+y_2)^2\right)}H_2^{(1)}\left(k\sqrt{s^2+(L_{\rm nc}'+y_2)^2}\right).
\end{align*}
We recall that $H_n^{(1)}(z)$ is analytic in $|z|>0$, $|\arg(z)| < \pi$. To derive bounds on $K(s,\bz)$ we need bounds on $H^*_n(z) := \re^{-\ri z} H_n^{(1)}(z)$. From \cite[\S10.2(ii), \S10.17.5]{DLMF} it follows that, for some constant $C>0$,
\begin{align}
\label{H0bound}
|H^*_0(z)| \leq  C|z|^{-1/2}, \quad |z|>0, \; |\arg(z)| \leq \pi/2
\end{align}
and that, for every $c>0$ there exists $C>0$ such that
\begin{align}
\label{H2bound}
|H^*_2(z)| \leq  C|z|^{-1/2}, \quad |z|>c, \; |\arg(z)| \leq \pi/2.
\end{align}

Note that, for $s\in \mathbb{R}$ and $\bz\in \Gamma$,
\begin{align}
\label{KRep}
K(s,\bz ) = \int_0^\infty \re^{\ri k\phi(s,y_2,\bz )}S_k(s,y_2,\bz )\,\rd y_2,
\end{align}
where
$\phi(s,y_2,\bz ):=\chi{(s,L_{\rm nc}',y_2)}-\chi{(s,L_{\rm nc}',0)} + \chi{(z_1,-z_2,y_2)}$
and
\begin{align*}
S_k(s,y_2,\bz )&:=-\frac{k^2s(L_{\rm nc}'+y_2)}{4(\chi(s,L_{\rm nc}',y_2))^2}\, H^*_2(k\chi(s,L_{\rm nc}',y_2))\, H^*_0(k\chi(z_1,-z_2,y_2)),
\end{align*}
with
$\chi{(a,b,c)}:=\sqrt{a^2+(b+c)^2}$.
We now state some elementary properties of $\chi$.%

\begin{lem}
\label{ImChiLem}
Let $a,b\in\mathbb{R}$ and $c\in \mathbb{C}$, and let $\chi(a,b,c):=\sqrt{a^2+(b+c)^2}$, taking the principal value square root. Then $\chi{(a,b,c)}$ is analytic in %
$\real{c}>-b$, with
\begin{align}
\label{ReChiLB}
\real{\chi{(a,b,c)}}&\geq \real{c}+b>0, &&%
\\
\label{ImChiLB}
\im{\chi{(a,b,c)}}&\geq 0, && \mbox{if } \im{c}\geq 0.
\end{align}
In particular, for $b>0$ and  $t\geq 0$,%
\begin{align}
\label{rechibound}
\real{\chi(a,b,t\re^{\ri \pi/4})}&\geq\frac{\sqrt{a^2+b^2}+t}{2},\\
\label{imchibound}
\im{\chi(a,b,t\re^{\ri \pi/4})}&\geq\frac{bt}{\sqrt{2}\sqrt{a^2+b^2}}.
\end{align}
\end{lem}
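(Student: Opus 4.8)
The plan is to reduce every assertion to elementary facts about the principal-value square root. Throughout I write $\zeta:=b+c$ and $c=\xi+\ri\eta$ with $\xi,\eta$ real, so that $\chi(a,b,c)=\sqrt{a^2+\zeta^2}$ and $\real{\zeta}=b+\xi$. Setting $w:=a^2+\zeta^2=A+\ri B$ and $\chi=p+\ri q$, I will use repeatedly that on the principal branch $p=\real{\chi}>0$ whenever $w\notin(-\infty,0]$, together with the identities $p^2-q^2=A$, $\,2pq=B$, and the explicit formulas $p=\tfrac{1}{\sqrt2}\sqrt{|w|+A}$ and $q=\sign(B)\tfrac{1}{\sqrt2}\sqrt{|w|-A}$. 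For analyticity, since $c\mapsto w(c)$ is entire and $\sqrt{\cdot}$ is holomorphic off $(-\infty,0]$, it suffices to check that $w$ never meets the cut when $\real{c}>-b$. Here $\im{w}=2(b+\xi)\eta$ with $b+\xi>0$, so $\im{w}=0$ forces $\eta=0$, whence $w=a^2+(b+\xi)^2>0$; thus $w\notin(-\infty,0]$ and $\chi$ is analytic on $\real{c}>-b$.

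For the two general bounds, write $\sigma:=b+\xi>0$ and $\tau:=\eta$, so $A=a^2+\sigma^2-\tau^2$ and $B=2\sigma\tau$. To prove \rf{ReChiLB} I show $p\geq\sigma$. Using $p^2=(|w|+A)/2$, this is equivalent to $|w|\geq 2\sigma^2-A=\sigma^2+\tau^2-a^2$; if the right-hand side is nonpositive this is trivial, and otherwise I square and expand, obtaining $|w|^2-(\sigma^2+\tau^2-a^2)^2=4a^2\sigma^2\geq0$, which gives the claim (and $p\geq\sigma>0$). The sign statement \rf{ImChiLB} is then immediate from $2pq=B=2\sigma\tau$: since $p,\sigma>0$, the sign of $q$ equals that of $\tau=\im{c}$, so $\im{c}\geq0$ yields $q=\im{\chi}\geq0$.

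For the sharp estimates I specialise to $c=t\re{}^{\ri\pi/4}$ with $t\geq0$, $b>0$, so that $\sigma=b+t/\sqrt2$ and $\tau=t/\sqrt2$, and a short computation gives $A=\rho^2+\sqrt2\,bt$ and $B=t(\sqrt2\,b+t)$, where $\rho:=\sqrt{a^2+b^2}$; note $A,B\geq0$. For \rf{rechibound} I combine $p^2=(|w|+A)/2$ with $|w|=\sqrt{A^2+B^2}\geq(A+B)/\sqrt2$; substituting and discarding a nonnegative cross term reduces the inequality to $(1+\sqrt2)\rho^2+(\sqrt2-1)t^2\geq 2\rho t$, which follows from AM--GM precisely because $(1+\sqrt2)(\sqrt2-1)=1$. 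For \rf{imchibound} I use $q=\tfrac{1}{\sqrt2}\sqrt{|w|-A}$ (valid as $B\geq0$), so the claim becomes $\sqrt{A^2+B^2}\geq A+b^2t^2/\rho^2$; squaring (both sides nonnegative since $A\geq0$) and clearing $\rho^4$ reduces it to $\rho^4B^2-2Ab^2t^2\rho^2-b^4t^4\geq0$, and inserting the expressions for $A$ and $B$ collapses the left side to $2\sqrt2\,bt^3\rho^2(\rho^2-b^2)+t^4(\rho^4-b^4)$, which is nonnegative because $\rho^2-b^2=a^2\geq0$.

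The main obstacle is that the last two bounds are \emph{sharp} (both hold with equality at $a=0$), so no crude estimate suffices; the work lies entirely in choosing algebraic manipulations that make nonnegativity manifest, namely the numerical coincidence $(1+\sqrt2)(\sqrt2-1)=1$ for \rf{rechibound} and the emergence of the factor $a^2=\rho^2-b^2$ after squaring in \rf{imchibound}. The only point requiring care is the direction of each squaring step, which is legitimate here because the relevant right-hand sides are nonnegative once $A\geq0$ is established.
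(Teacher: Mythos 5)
Your proof is correct and takes essentially the same route as the paper's: the same analyticity argument, the same explicit formulas for the real and imaginary parts of the principal square root, the same difference-of-squares identity (reducing \rf{ReChiLB} to $|w|^2-(\sigma^2+\tau^2-a^2)^2=4a^2\sigma^2\ge 0$), and, for \rf{imchibound}, the same squaring step followed by the same factorization in which the factor $a^2$ makes nonnegativity manifest; for \rf{rechibound} the paper argues marginally more directly (from $A\ge a^2+b^2$, $B\ge t^2$ and then QM--AM), but your $(A+B)/\sqrt{2}$ plus AM--GM variant is equally sound. One small aside: your sharpness remark holds for \rf{imchibound} (equality at $a=0$) but not for \rf{rechibound}, since at $a=0$ the left-hand side is $b+t/\sqrt{2}$, which is strictly larger than $(b+t)/2$ for $b>0$.
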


\begin{proof}
Write $c=c_r+\ri c_i$ where $c_r>-b,c_i\in\mathbb{R}$. Then $a^2+(b+c)^2 = \xi +\ri  \eta$, where
$\xi:=a^2+(b+c_r)^2 - c_i^2$, $\eta:=2c_i(b+c_r)$.
If $c_r>-b$ then $\eta\neq 0$, unless $c_i=0$, in which case $\xi>0$. So $\chi{(a,b,c)}$ is analytic in $\real{c}>-b$ with
\begin{eqnarray}
  \real{\chi(a,b,c)}  &=&  \sqrt{\frac{\xi+\left(\xi^2+\eta^2\right)^{{1/2}}}{2}}>0, \label{RechiLBPrelim}\\
  \im{\chi(a,b,c)}  &=&  \sign(c_i)\,\sqrt{\frac{-\xi+\left(\xi^2+\eta^2\right)^{{1/2}}}{2}}, \label{ImchiLBPrelim}
\end{eqnarray}
which gives \rf{ImChiLB}. Writing
$
2\real{\chi(a,b,c)}^2=2(b+c_r)^2 + \mu_1,
$
where we define
\mbox{$\mu_1 := \sqrt{\left(a^2- c_i^2+(b+c_r)^2 \right)^2 + 4c_i^2(b+c_r)^2} + a^2-c_i^2-(b+c_r)^2$},
and noting that
$
\left(a^2- c_i^2 +(b+c_r)^2 \right)^2 + 4c_i^2(b+c_r)^2 - \left(a^2-c_i^2-(b+c_r)^2 \right)^2 = 4a^2(b+c_r)^2\geq 0$,
it follows that $\mu_1 \geq 0$, and hence~\rf{ReChiLB} holds.

When $c=t\re^{\ri \pi/4}$ with $t\geq 0$, we have
$\xi=a^2+b^2 + \sqrt{2}bt\geq a^2+b^2$ and $\eta=t(\sqrt{2}b + t)\geq t^2$,
and~\rf{rechibound} follows from~\rf{RechiLBPrelim}.  Also,
by~\rf{ImchiLBPrelim}, %
\begin{align*}
  2(a^2+b^2)\,\im{\chi(a,b,t\re^{\ri \pi/4})}^2&= b^2t^2+\mu_2,
\end{align*}
where $\mu_2 :=-(b^2t^2 + (a^2+b^2)\xi) + (a^2+b^2)\sqrt{\xi^2+\eta^2}$.
One can check that
\[ (a^2+b^2)^2(\xi^2+\eta^2) - (b^2t^2 + (a^2+b^2)\xi)^2 = t^3a^2\left(2\sqrt{2}\,b(a^2+b^2) + (a^2+2b^2)t\right)\geq 0, \]
from which we deduce that $\mu_2\geq 0$, from which~\rf{imchibound} follows.
\end{proof}

In order to prove Lemma~\ref{v2tildeThm} we must consider the analytic continuation of $K(s,\bz )$ into the complex $s$-plane. But before complexifying $s$ it is helpful to modify the representation \rf{KRep} by deforming the contour of integration off the real line.
From~\rf{KRep} it follows from Cauchy's theorem that, for $s\in \mathbb{R}$ and $\bz\in \Gamma$, where $f(w):=\re^{\ri k\phi(s,w,\bz )}S_k(s,w,\bz )$ and $\gamma^*:=\{w= t\re^{\ri \pi/4}:t\geq 0\}$,
\begin{align}
\label{KRepDeform}
K(s,\bz ) = \int_{\gamma^*} f(w)\, \rd w =  \re^{\ri \pi/4} \int_0^\infty \re^{\ri k\phi(s,t\re^{\ri \pi/4},\bz )}S_k(s,t\re^{\ri \pi/4},\bz )\,\rd t.
\end{align}
This application of Cauchy's theorem is valid since, by Lemma \ref{ImChiLem}, $f(w)$
is analytic in $\real{w}> 0$; further, $\im{\phi(s,w,\bz )}\geq 0$, so that $|\re^{\ri k\phi(s,w,\bz )}|  \leq 1$, if $\real{w}> 0$ and $\im{w}\geq 0$; moreover, the bounds \eqref{ReChiLB}, \eqref{H0bound}, and \eqref{H2bound} imply that
\begin{align*}
S_k(s,w,\bz ) = \ord{|w|^{-1/2}}, \;\; \mbox{as } |w|\to 0, \quad S_k(s,w,\bz ) = \ord{|w|^{-2}}, \;\; \mbox{as } |w|\to \infty,
\end{align*}
uniformly in $\arg(w)$, for $0\leq \arg(w)\leq \pi/4$.

Having established the validity of the representation \rf{KRepDeform} for $s\in \mathbb{R}$, we now show that this same formula represents the analytic continuation of $K(s,\bz )$.%
\begin{lem}
\label{KAnalCor}
For $\bz\in \Gamma$, $K(s,\bz )$, defined by \rf{KRepDeform}, is analytic as a function of $s$ in $D_\eps $, with $\eps$ given by \eqref{eqn:eps_defn}. Further, for every $k_0>0$,
\begin{align}
\label{KEst1}
\left|K(s,\bz )\right| &\leq Ck^{{1/2}}%
\zeta(\bz ),\qquad s\in D_\eps,\; k\geq k_0, \; \bz\in \Gamma,
\end{align}
where $C>0$ depends only on $\Omega$ and $k_0$, and %
\begin{align*}
\label{}
\zeta(\bz ):=
\begin{cases}
1, & 0<k\left|\bz \right|<1,\\
(k\left|\bz \right|)^{-{1/2}}, & k\left|\bz \right|\geq1.
\end{cases}
\end{align*}
\end{lem}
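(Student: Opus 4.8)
The plan is to read off both claims from a single locally uniform, integrable majorant for the integrand in the deformed representation \eqref{KRepDeform}. Write $s=\sigma+\ri\tau$, so that $s\in D_\eps$ forces $|\tau|<\eps$ and $|\sigma|\leq L_{\rm nc}+\eps$, and abbreviate $\chi_1:=\chi(s,L_{\rm nc}',t\re^{\ri\pi/4})$, $\chi_0:=\chi(s,L_{\rm nc}',0)$ and $\chi_2:=\chi(z_1,-z_2,t\re^{\ri\pi/4})$, so that $\phi=\chi_1-\chi_0+\chi_2$. The first thing I would record is a geometric consequence of the visibility condition in Definition \ref{classCDef}: it confines $\Omega$, and hence every $\bz\in\Gamma$, to $\{x_2\leq 0\}$, so $-z_2\geq 0$ and Lemma \ref{ImChiLem} applies to $\chi_2$ with $b=-z_2\geq 0$. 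In particular \eqref{ImChiLB} gives $\im{\chi_2}\geq 0$ and \eqref{rechibound} gives $\real{\chi_2}\geq(|\bz|+t)/2$, hence $|\chi_2|\geq(|\bz|+t)/2$. Since $\chi_2$ does not involve $s$, it is trivially analytic in $s$.

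Next I would verify that $\chi_1$ and $\chi_0$ are analytic in $s$ on $D_\eps$. Writing $L_{\rm nc}'+t\re^{\ri\pi/4}=p+\ri q$ with $p=L_{\rm nc}'+t/\sqrt2$ and $q=t/\sqrt2$, the identity $p-q=L_{\rm nc}'$ yields $p^2-q^2=L_{\rm nc}'(L_{\rm nc}'+\sqrt2\,t)\geq L_{\rm nc}'^2$, so that for the radicand $W:=s^2+(L_{\rm nc}'+t\re^{\ri\pi/4})^2$ defining $\chi_1$,
\[ \real{s^2+(L_{\rm nc}'+t\re^{\ri\pi/4})^2}=\sigma^2-\tau^2+p^2-q^2\geq L_{\rm nc}'^2-\eps^2>0, \]
using $\eps<L_{\rm nc}'$. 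Hence $W$ lies in the open right half-plane for all $s\in D_\eps$ and $t\geq 0$ (the case $t=0$ covering $\chi_0$), so $\chi_1,\chi_0$ are analytic on $D_\eps$, lie in the sector $|\arg(\chi_1)|<\pi/4$, and satisfy $|\chi_1|\geq(L_{\rm nc}'^2-\eps^2)^{1/2}$. This places $k\chi_1$ in $\{|\arg|\leq\pi/2\}$ with $|k\chi_1|\geq k_0(L_{\rm nc}'^2-\eps^2)^{1/2}$ for $k\geq k_0$, so the Hankel bounds \eqref{H2bound} and \eqref{H0bound} (the latter since $\chi_2$ sits in the first quadrant) are available. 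Feeding these, together with $\real{\chi_1}\geq(\real{W})^{1/2}$ and $|\chi_2|\geq(|\bz|+t)/2$, into the explicit formula for $S_k$ gives a magnitude bound $|S_k|\leq Ck\,(L_{\rm nc}'+t)\,|\chi_1|^{-5/2}|\chi_2|^{-1/2}$, whose $t$-tail decays like $t^{-2}$ and which is therefore integrable, uniformly for $s$ in compact subsets of $D_\eps$ and $\bz\in\Gamma$. Since $H^{(1)}_n$ is analytic off its cut, the full integrand is analytic in $s$ for each fixed $t$.

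The crux — and the step I expect to be hardest — is the lower bound $\im{\phi}\geq 0$ on all of $D_\eps$, because Lemma \ref{ImChiLem} controls $\im{\chi}$ only when the first argument is real, whereas $\chi_1$ and $\chi_0$ both carry the complex parameter $s$. I would treat the combination $\chi_1-\chi_0$, in which the $s$-dependence nearly cancels, through the integral representation
\[ \chi_1-\chi_0=\re^{\ri\pi/4}\int_0^t\frac{L_{\rm nc}'+u\re^{\ri\pi/4}}{\chi(s,L_{\rm nc}',u\re^{\ri\pi/4})}\,\rd u, \]
and show that the integrand has nonnegative imaginary part. Its numerator $\re^{\ri\pi/4}(L_{\rm nc}'+u\re^{\ri\pi/4})=L_{\rm nc}'/\sqrt2+\ri(L_{\rm nc}'/\sqrt2+u)$ lies in the first quadrant, while the denominator lies in $\{|\arg|<\pi/4\}$; a short computation then reduces nonnegativity of the imaginary part to the scalar inequality $\im{W(u)}\leq 2\,\real{W(u)}\,(1+\sqrt2\,u/L_{\rm nc}')$, where $W(u):=s^2+(L_{\rm nc}'+u\re^{\ri\pi/4})^2$. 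This is exactly where the smallness $\eps=L_{\rm nc}'/(32\sqrt2)$ from \eqref{eqn:eps_defn} enters: at $u=0$ the ratio $\im{W}/(2\real{W})$ is at most $\eps/(2(L_{\rm nc}'^2-\eps^2)^{1/2})<1$, and the $\sigma^2$ term in $\real{W(u)}$ keeps the ratio below $1+\sqrt2\,u/L_{\rm nc}'$ for every $u\geq 0$. Combined with $\im{\chi_2}\geq 0$ this gives $\im{\phi}\geq 0$; the same computation yields a linear lower bound $\im{\chi_1-\chi_0}\geq c_0\min(t,L_{\rm nc}')$ for some $c_0>0$ depending only on $\Omega$.

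Finally I would assemble the bound. From $|K(s,\bz)|\leq\int_0^\infty \re^{-k\im{\phi}}|S_k|\,\rd t$ and the estimates above, and using $(L_{\rm nc}'+t)|\chi_1|^{-5/2}\leq C$,
\[ |K(s,\bz)|\leq Ck\int_0^\infty(|\bz|+t)^{-1/2}\re^{-c_0k\min(t,L_{\rm nc}')}\,\rd t. \]
The substitution $t=\tau/k$ cancels the prefactor $k$ and gives $|K(s,\bz)|\leq C\int_0^\infty(|\bz|+\tau/k)^{-1/2}\re^{-c_0\tau}\,\rd\tau$; bounding $(|\bz|+\tau/k)^{-1/2}$ by $|\bz|^{-1/2}$ when $k|\bz|\geq 1$ and by $(k/\tau)^{1/2}$ when $k|\bz|<1$ produces precisely the factor $k^{1/2}\zeta(\bz)$, which is \eqref{KEst1}. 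Analyticity of $K$ on $D_\eps$ then follows from the $s$-analyticity of the integrand for each fixed $t$ together with the uniform integrable majorant of the second paragraph, via Fubini's theorem and Morera's theorem (equivalently, differentiation under the integral sign).
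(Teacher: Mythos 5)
Your strategy is essentially the paper's---work from the deformed representation \eqref{KRepDeform}, obtain analyticity from a locally uniform integrable majorant, and obtain \eqref{KEst1} by bounding $|S_k|$ algebraically and $\im{\phi}$ from below---and several steps coincide with the paper's: your right-half-plane computation $\real{s^2+(L_{\rm nc}'+t\re^{\ri\pi/4})^2}\ge L_{\rm nc}'^2-\eps^2>0$ is exactly how the paper justifies analyticity, and your bound $|S_k|\le Ck(L_{\rm nc}'+t)|\chi_1|^{-5/2}|\chi_2|^{-1/2}$ is essentially the paper's Lemma \ref{SkLem}. Your route to the positivity of $\im{\phi}$, via the integral representation of $\chi_1-\chi_0$ along the ray, is genuinely different from the paper's (which proves the bound at real $s_0$ via Lemma \ref{ImChiLem} and then controls the perturbation $|\phi(s,\cdot)-\phi(s_0,\cdot)|$, the smallness of $\eps$ in \eqref{eqn:eps_defn} entering there). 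But there is a genuine gap in your final assembly. Your phase bound saturates: $\im{\phi}\ge c_0\min(t,L_{\rm nc}')$ gives no decay for $t>L_{\rm nc}'$, and since you also discard the algebraic decay of $S_k$ by using $(L_{\rm nc}'+t)|\chi_1|^{-5/2}\le C$, your displayed majorant $Ck\int_0^\infty(|\bz|+t)^{-1/2}\re^{-c_0k\min(t,L_{\rm nc}')}\,\rd t$ is actually infinite: on $t>L_{\rm nc}'$ the integrand equals $(|\bz|+t)^{-1/2}\re^{-c_0kL_{\rm nc}'}$, which is not integrable. The divergence is masked by an algebra slip in the next step: under $t=\tau/k$ the exponential becomes $\re^{-c_0\min(\tau,\,kL_{\rm nc}')}$, \emph{not} $\re^{-c_0\tau}$, so the inequality $|K(s,\bz)|\le C\int_0^\infty(|\bz|+\tau/k)^{-1/2}\re^{-c_0\tau}\,\rd\tau$ does not follow from what precedes it.

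The repair is available inside your own framework, in either of two ways. (a) Upgrade the phase bound to one linear in $t$ for \emph{all} $t\ge0$, i.e.\ $\im{\phi}\ge c_1t$; this is what the paper proves (Lemma \ref{ImphiCor}, with $c_1=L_{\rm nc}'/(2\sqrt{2}\sqrt{L_{\rm nc}'^2+L_{\rm nc}^2})$), and your integral representation delivers it too, because the integrand $\re^{\ri\pi/4}(L_{\rm nc}'+u\re^{\ri\pi/4})/\chi(s,L_{\rm nc}',u\re^{\ri\pi/4})$ has imaginary part bounded below by a positive constant uniformly in $u\ge0$ (it tends to $1/\sqrt{2}$ as $u\to\infty$), not merely for $u\le L_{\rm nc}'$. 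With $\re^{-c_1kt}$ in the exponent, your substitution $t=\tau/k$ and the case split $k|\bz|\ge1$ versus $k|\bz|<1$ go through verbatim and yield \eqref{KEst1}. (b) Alternatively, keep the $\ord{kt^{-2}}$ decay of $|S_k|$ that you established in your second paragraph on the tail $t>L_{\rm nc}'$: that tail then contributes at most $Ck\re^{-c_0kL_{\rm nc}'}$, which for $k\ge k_0$ is bounded by a constant and hence by $Ck^{1/2}\zeta(\bz)$ (since $|\bz|\le\diam(\Gamma)$), and only the range $t\le L_{\rm nc}'$, where your exponent is genuinely linear, needs the substitution argument. Either repair closes the proof; as written, it does not.
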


The proof of Lemma \ref{KAnalCor} is based on the following two intermediate results.

\begin{lem}
\label{ImphiCor}
For $t\geq 0$ and $\bz\in \Gamma$, $\phi(s,t\re^{\ri \pi/4},\bz )$ is analytic as a function of $s$ in $D_\eps $, with $\eps$ given by \eqref{eqn:eps_defn}. Further,
\begin{align}
\label{ImphiCorRes}
\im{\phi(s,t\re^{\ri \pi/4},\bz )}\geq \frac{L_{\rm nc}'t}{2\sqrt{2}\sqrt{L_{\rm nc}'^2+L_{\rm nc}^2}} ,\qquad s\in D_\eps ,\,t\geq0, \, \bz\in \Gamma.
\end{align}
\end{lem}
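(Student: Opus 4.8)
The plan is to split
$\phi(s,t\re^{\ri\pi/4},\bz)=\big[\chi(s,L_{\rm nc}',t\re^{\ri\pi/4})-\chi(s,L_{\rm nc}',0)\big]+\chi(z_1,-z_2,t\re^{\ri\pi/4})$
and to observe that only the bracketed difference depends on $s$. For the analyticity claim it is enough that the two $s$-dependent radicands $s^2+(L_{\rm nc}'+t\re^{\ri\pi/4})^2$ and $s^2+L_{\rm nc}'^2$ avoid the cut $(-\infty,0]$ of the principal square root: for $s\in D_\eps$ we have $\real{s^2}>-\eps^2$, while $\real{(L_{\rm nc}'+t\re^{\ri\pi/4})^2}=L_{\rm nc}'^2+\sqrt2\,L_{\rm nc}'t\ge L_{\rm nc}'^2$, and since $\eps<L_{\rm nc}'$ by \eqref{eqn:eps_defn} both radicands have strictly positive real part. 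Hence both square roots, and thus $\phi$ (the third term being $s$-independent and finite, since $z_2\le0$ on $\Gamma$ places its radicand in the right half-plane via Lemma \ref{ImChiLem}), are analytic on $D_\eps$.

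For the lower bound I would first note that the third term is harmless: by \eqref{ImChiLB} with $a=z_1$, $b=-z_2\ge0$ (the visibility condition in Definition \ref{classCDef} forces $z_2\le0$ for $\bz\in\Gamma$) and $c=t\re^{\ri\pi/4}$, we get $\im{\chi(z_1,-z_2,t\re^{\ri\pi/4})}\ge0$, so it suffices to bound the bracketed difference. When $s\in[0,L_{\rm nc}]$ is real this is immediate from \eqref{imchibound} (Lemma \ref{ImChiLem} with $a=s$, $b=L_{\rm nc}'$): $\im{\chi(s,L_{\rm nc}',t\re^{\ri\pi/4})}\ge L_{\rm nc}'t/(\sqrt2\sqrt{s^2+L_{\rm nc}'^2})\ge L_{\rm nc}'t/(\sqrt2\sqrt{L_{\rm nc}^2+L_{\rm nc}'^2})$, while $\chi(s,L_{\rm nc}',0)$ is real and so contributes nothing. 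This already gives \emph{twice} the bound \eqref{ImphiCorRes}, and the surplus factor of two is precisely the slack to be spent on complexifying $s$.

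The main obstacle is extending this to complex $s\in D_\eps$, since Lemma \ref{ImChiLem} requires a real first argument. The essential idea is to estimate the difference as a whole rather than the two square roots separately: individually each imaginary part is only of order $\eps$ with uncontrolled sign, whereas in the difference these errors cancel and the true order-$t$ behaviour survives. I would write the difference as $\int_0^t \re^{\ri\pi/4}(L_{\rm nc}'+\tau\re^{\ri\pi/4})/\chi(s,L_{\rm nc}',\tau\re^{\ri\pi/4})\,\rd\tau$ and bound the imaginary part of the integrand below, uniformly in $\tau\ge0$. Setting $w=L_{\rm nc}'+\tau\re^{\ri\pi/4}$ and $\Lambda=\chi(s,L_{\rm nc}',\tau\re^{\ri\pi/4})$, one has $\im{\re^{\ri\pi/4}w/\Lambda}=(|w|/|\Lambda|)\sin(\pi/4+\arg w-\arg\Lambda)$; here $|w|/|\Lambda|\ge L_{\rm nc}'/\sqrt{(L_{\rm nc}+\eps)^2+L_{\rm nc}'^2}$, and (writing $s=\sigma+\ri\delta$ and $X=\real{s^2+w^2}$) $\arg\Lambda$ differs from its real-$s$ value by at most $|\sigma\delta|/X$. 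The key technical point is that, on keeping numerator and denominator together and maximising $|\sigma|/(\sigma^2+L_{\rm nc}'^2)$ in $\sigma$, this perturbation is $\ord{\eps/L_{\rm nc}'}$ \emph{independently of} $L_{\rm nc}$ — which is exactly what matches the $L_{\rm nc}'$-only dependence of $\eps$ in \eqref{eqn:eps_defn}.

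Finally, the value $\eps=L_{\rm nc}'/(32\sqrt2)$ makes both the $|w|/|\Lambda|$ loss and the $\arg\Lambda$ perturbation small enough that, since for real $s$ the angle $\pi/4+\arg w-\arg\Lambda$ lies in $[\pi/4,\pi/2)$ (where $\sin\ge1/\sqrt2$), the integrand remains at least $L_{\rm nc}'/(2\sqrt2\sqrt{L_{\rm nc}^2+L_{\rm nc}'^2})$ for all $\tau\ge0$ and $s\in D_\eps$. Integrating over $[0,t]$ and adding the nonnegative third term then yields \eqref{ImphiCorRes}. I expect the uniform (in both $\tau$ and $L_{\rm nc}$) control of $\arg\Lambda$ to be the crux: it is precisely this uniformity that forces one to work with the full difference, and not with each square root in isolation.
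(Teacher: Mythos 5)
Your proposal is correct, and it shares the paper's overall skeleton: both establish the bound first at real $s$ via Lemma \ref{ImChiLem}, obtaining a factor-of-two surplus over \eqref{ImphiCorRes}; both recognise that the two $\chi$'s must be handled as a single difference, so that the $\ord{\eps}$ imaginary-part errors coming from $\im{s}$ (which have uncontrolled sign) cancel; and both spend the surplus on the complexification, with the specific value \eqref{eqn:eps_defn} making the loss at most half. Where you genuinely differ is the mechanism controlling the complexification. The paper stays purely algebraic: it writes $\phi(s,t\re^{\ri\pi/4},\bz)=A/B(s)+\chi(z_1,-z_2,t\re^{\ri\pi/4})$ with $A=t\re^{\ri\pi/4}(2L_{\rm nc}'+t\re^{\ri\pi/4})$ and $B(s)=\chi(s,L_{\rm nc}',t\re^{\ri\pi/4})+\chi(s,L_{\rm nc}',0)$ (difference of squares over sum), and bounds the full modulus $|\phi(s,\cdot,\bz)-\phi(s_0,\cdot,\bz)|$ for $|s-s_0|<\eps$ by $L_{\rm nc}'t/(2\sqrt2\sqrt{s_0^2+L_{\rm nc}'^2})$, using $|A|\le t(2L_{\rm nc}'+t)$, a lower bound on $|B|$, and the square-root perturbation estimate \eqref{chidiff}; the latter does double duty, being reused in Lemma \ref{SkLem}. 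You instead represent the $s$-dependent part as $\int_0^t\re^{\ri\pi/4}w/\Lambda\,\rd\tau$, with $w=L_{\rm nc}'+\tau\re^{\ri\pi/4}$ and $\Lambda=\chi(s,L_{\rm nc}',\tau\re^{\ri\pi/4})$, and lower-bound $\im{\re^{\ri\pi/4}w/\Lambda}$ pointwise in $\tau$ for complex $s$ by a modulus-argument analysis; this estimates only the imaginary part (all that is needed) and exhibits the phase as accruing at a definite rate along the deformed contour, uniformly in $\tau$---a more geometric picture than the paper's, at the price of some angle bookkeeping. The one step you state too loosely is the argument perturbation: rather than ``at most $|\sigma\delta|/X$'', write $\Lambda^2/\Lambda_0^2=1+\zeta$, where $\Lambda_0=\chi(\sigma,L_{\rm nc}',\tau\re^{\ri\pi/4})$ and $\zeta=(2\ri\sigma\delta-\delta^2)/\Lambda_0^2$, so that $|\arg\Lambda-\arg\Lambda_0|=\tfrac12|\arg(1+\zeta)|\le\tfrac12\arcsin|\zeta|$; since $|\Lambda_0^2|\ge\sigma^2+L_{\rm nc}'^2$ for every $\tau\ge0$, one gets $|\zeta|\le\eps(2|\sigma|+\eps)/(\sigma^2+L_{\rm nc}'^2)\le\eps/L_{\rm nc}'+\eps^2/L_{\rm nc}'^2$, uniformly in $\tau$, $\sigma$ and $L_{\rm nc}$, exactly the uniformity you identified as the crux. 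With $\eps=L_{\rm nc}'/(32\sqrt2)$ this perturbation is about $0.011$ radians, and combined with your bound $|w|/|\Lambda|\ge L_{\rm nc}'/\sqrt{(L_{\rm nc}+\eps)^2+L_{\rm nc}'^2}$ the integrand stays above roughly $0.68\,L_{\rm nc}'/\sqrt{L_{\rm nc}^2+L_{\rm nc}'^2}$, comfortably clearing the required $L_{\rm nc}'/(2\sqrt2\sqrt{L_{\rm nc}^2+L_{\rm nc}'^2})$; so your constants close with nearly a factor-two margin.
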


\begin{proof}
Suppose $t\geq 0$ and $\bz\in \Gamma$. For $s_0\in [0,L_{\rm nc}]$,
\begin{align}
\label{Imphis0Est}
\im{\phi(s_0,t\re^{\ri \pi/4},\bz )} = \im{\chi(s_0,L_{\rm nc}',t\re^{\ri \pi/4})} + \im{\chi(z_1,-z_2,t\re^{\ri \pi/4})}\geq \frac{L_{\rm nc}'t}{\sqrt{2}\sqrt{s_0^2+L_{\rm nc}'^2}},
\end{align}
by \eqref{imchibound} and \eqref{ImChiLB}, applied to $\chi(s_0,L_{\rm nc}',t\re^{\ri \pi/4})$ and $\chi(z_1,-z_2,t\re^{\ri \pi/4})$, respectively.
We next note that,
for $s\in \mathbb{C}$,
\begin{align*}
\label{}
\phi(s,t\re^{\ri \pi/4},\bz ) = \frac{A}{B(s)} + \chi(z_1,-z_2,t\re^{\ri \pi/4}),
\end{align*}
where $A := t\re^{\ri \pi/4}(2L_{\rm nc}'+t\re^{\ri \pi/4})$ and $B(s) :=\chi(s,L_{\rm nc}',t\re^{\ri \pi/4})+\chi(s,L_{\rm nc}',0)$.
Thus, for $s_0\in [0,L_{\rm nc}]$ and $|s-s_0|< \eps$,
\begin{align}
\label{PhiDiff}
|\phi(s,t\re^{\ri \pi/4},\bz )-\phi(s_0,t\re^{\ri \pi/4},\bz )| &=  \frac{\left|A\right|\left|B(s)-B(s_0)\right|}{\left|B(s_0)\right|\left|B(s)\right|}%
\leq \frac{\left|A\right|\left|B(s)-B(s_0)\right|}{\left|B(s_0)\right|\left|\left|B(s_0)\right|-\left|B(s)-B(s_0)\right|\right|}.
\end{align}
Now $|A| \leq t(2L_{\rm nc}'+t)$,
and, by \eqref{ReChiLB},
\begin{align*}
\left|B(s_0)\right| \geq \real{B(s_0)} \geq L_{\rm nc}' + \frac{t}{\sqrt{2}} + \sqrt{s_0^2+L_{\rm nc}'^2}  .
\end{align*}
Also, $\real{\chi(s,L_{\rm nc}',t\re^{\ri \pi/4})}>0 $ for  $s\in D_\eps $, since $\eps<L_{\rm nc}'$ so that
$\real{s^2+(L_{\rm nc}'+t\re^{\ri \pi/4})^2}\geq - \im{s}^2 + L_{\rm nc}'^2 > 0$.
Thus, using \eqref{rechibound},
\begin{align}
\label{chidiff}
\left|\chi(s,L_{\rm nc}',t\re^{\ri \pi/4})-\chi(s_0,L_{\rm nc}',t\re^{\ri \pi/4})\right| &= \frac{\left|s-s_0\right|\left|s+s_0\right|}{\left|\chi(s,L_{\rm nc}',t\re^{\ri \pi/4})+\chi(s_0,L_{\rm nc}',t\re^{\ri \pi/4})\right|}\notag\\
&\leq \frac{{\eps}(2s_0+{\eps})}{\real{\chi(s_0,L_{\rm nc}',t\re^{\ri \pi/4})}}\notag\\
&\leq \frac{4{\eps}(s_0+L_{\rm nc}')}{\sqrt{s_0^2+L_{\rm nc}'^2}} \; \leq 4\sqrt{2}\eps.
\end{align}
This implies that
$\left|B(s)-B(s_0)\right|
\leq 8\sqrt{2}\,\eps$.
Inserting these bounds into \rf{PhiDiff} gives
\begin{eqnarray} \notag
|\phi(s,t\re^{\ri \pi/4},\bz )-\phi(s_0,t\re^{\ri \pi/4},\bz )| & \leq & \frac{8\sqrt{2}\,t\eps(2L_{\rm nc}'+t)}{\left(2L_{\rm nc}'+t/\sqrt{2}\right)\left(L_{\rm nc}' + \sqrt{s_0^2+L_{\rm nc}'^2}-8\sqrt{2}\eps\right)}\\
\label{phidiffs}
& \leq & \frac{L_{\rm nc}'t}{2\sqrt{2}\sqrt{s_0^2+L_{\rm nc}'^2}},
\end{eqnarray}
on using \eqref{eqn:eps_defn}.
The result \rf{ImphiCorRes} follows by combining \rf{Imphis0Est} and \rf{phidiffs}.
\end{proof}

\begin{lem}
\label{SkLem}
For $t\geq 0$ and $\bz\in \Gamma$, $S_k(s,t\re^{\ri \pi/4},\bz )$ is analytic as a function of $s$ in $D_\eps $, with $\eps$ given by \eqref{eqn:eps_defn}. Further, for every $k_0>0$,
\begin{align}
\label{SEst}
|S_k(s,t\re^{\ri \pi/4},\bz )|\leq Ck%
(L_{\rm nc}'+t)(|\bz|+t)^{-1/2},\quad s\in D_\eps,
\end{align}
for $t\geq 0$, $\bz\in \Gamma$, and $k\geq k_0$, where $C>0$ depends only on $\Omega$ and $k_0$.
\end{lem}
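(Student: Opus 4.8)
The statement has two parts, analyticity of $S_k(s,t\re^{\ri\pi/4},\bz)$ in $s$ on $D_\eps$ and the pointwise bound \eqref{SEst}, and both reduce to controlling the two square-root factors $\chi_1:=\chi(s,L_{\rm nc}',t\re^{\ri\pi/4})$ and $\chi_2:=\chi(z_1,-z_2,t\re^{\ri\pi/4})$ in the definition of $S_k$, together with the Hankel bounds \eqref{H0bound}--\eqref{H2bound}. The key structural observation is that only $\chi_1$ (hence $H^*_2(k\chi_1)$ and the rational prefactor $k^2s(L_{\rm nc}'+t\re^{\ri\pi/4})/(4\chi_1^2)$) depends on $s$; the remaining factors $L_{\rm nc}'+t\re^{\ri\pi/4}$, $\chi_2$ and $H^*_0(k\chi_2)$ are constant in $s$. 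I would organise the proof as: first analyticity, then two lower bounds on $|\chi_1|,|\chi_2|$, then assembly.

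For analyticity I would reuse the computation already performed in the proof of Lemma~\ref{ImphiCor}: for $s\in D_\eps$, since $\eps<L_{\rm nc}'$, one has $\real{s^2+(L_{\rm nc}'+t\re^{\ri\pi/4})^2}\geq -\im{s}^2+L_{\rm nc}'^2>0$, so the radicand lies in the right half-plane, $\chi_1$ is analytic with $\real{\chi_1}>0$ (in particular $\chi_1\neq0$), and $k\chi_1$ lies in the sector $|\arg|<\pi/2$, away from the branch cut of $H^{(1)}_2$. Hence $H^*_2(k\chi_1)$ and the prefactor are analytic in $s$ on $D_\eps$, and therefore so is $S_k(s,t\re^{\ri\pi/4},\bz)$.

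For the bound I need two lower bounds. First, for $\chi_1$: given $s\in D_\eps$ choose $s_0\in[0,L_{\rm nc}]$ with $|s-s_0|<\eps$; then \eqref{rechibound} gives $\real{\chi(s_0,L_{\rm nc}',t\re^{\ri\pi/4})}\geq(\sqrt{s_0^2+L_{\rm nc}'^2}+t)/2\geq L_{\rm nc}'/2$, while the perturbation estimate \eqref{chidiff} gives $|\chi(s,L_{\rm nc}',t\re^{\ri\pi/4})-\chi(s_0,L_{\rm nc}',t\re^{\ri\pi/4})|\leq4\sqrt{2}\,\eps=L_{\rm nc}'/8$, using \eqref{eqn:eps_defn}; hence $|\chi_1|\geq3L_{\rm nc}'/8$, a positive $\Omega$-dependent constant, and $|\arg(k\chi_1)|\leq\pi/2$. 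Second, for $\chi_2$: since $z_2\leq0$ on $\Gamma$ for polygons in $\cC$ (the visibility condition places $\Omega$ below the horizontal line through $\bR$ in the standard configuration), \eqref{rechibound} applies with $a=z_1$, $b=-z_2\geq0$ and gives $|\chi_2|\geq\real{\chi_2}\geq(|\bz|+t)/2\geq0$, which simultaneously bounds $|\chi_2|$ below and ensures $|\arg(k\chi_2)|\leq\pi/2$. Applying \eqref{H0bound} to $H^*_0(k\chi_2)$ (valid for $|k\chi_2|>0$, $|\arg|\leq\pi/2$) yields $|H^*_0(k\chi_2)|\leq Ck^{-1/2}(|\bz|+t)^{-1/2}$, and applying \eqref{H2bound} to $H^*_2(k\chi_1)$ with $c:=k_0(3L_{\rm nc}'/8)$ (valid since $|k\chi_1|\geq k_0(3L_{\rm nc}'/8)=c$ for $k\geq k_0$) yields $|H^*_2(k\chi_1)|\leq Ck^{-1/2}$. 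Combining these with $|s|\leq L_{\rm nc}+\eps$, $|L_{\rm nc}'+t\re^{\ri\pi/4}|\leq L_{\rm nc}'+t$, and $|\chi_1|^{-2}\leq C$ gives \eqref{SEst}. The crucial, deliberately lossy, step is to bound $|\chi_1|$ below by the $t$-independent constant $3L_{\rm nc}'/8$ in the $\chi_1^{-2}$ and $H^*_2$ factors: this is what lets the $(L_{\rm nc}'+t)$ from the numerator survive into the final estimate.

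I expect no serious obstacle: essentially all of the analytic machinery is already supplied by Lemma~\ref{ImChiLem} and the estimates derived inside the proof of Lemma~\ref{ImphiCor}. The only points demanding genuine care are (a) verifying that the arguments $k\chi_1$ and $k\chi_2$ satisfy the hypotheses of \eqref{H0bound}--\eqref{H2bound}, in particular the lower bound $|k\chi_1|>c$ required by \eqref{H2bound}, which forces the $t$-independent constant lower bound on $|\chi_1|$ rather than a sharper one growing in $t$; and (b) invoking the geometric fact $z_2\leq0$ on $\Gamma$ so that \eqref{rechibound} may be applied to $\chi_2$ with nonnegative second argument. The growth of $S_k$ in $t$ permitted by \eqref{SEst} is harmless, since it will be absorbed by the exponential decay $|\re^{\ri k\phi}|\leq\re^{-ckt}$ furnished by Lemma~\ref{ImphiCor} when the representation \eqref{KRepDeform} is integrated in the proof of Lemma~\ref{KAnalCor}.
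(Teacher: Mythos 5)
Your proof is correct and follows essentially the same route as the paper's: the paper likewise combines the lower bounds of Lemma \ref{ImChiLem} (it uses \eqref{ReChiLB} to get $\real{\chi(s_0,L_{\rm nc}',t\re^{\ri \pi/4})}\geq L_{\rm nc}'$, where you use \eqref{rechibound} to get $L_{\rm nc}'/2$) with the perturbation estimate \eqref{chidiff} to obtain a $t$-independent lower bound on $\chi(s,L_{\rm nc}',t\re^{\ri \pi/4})$ over $D_\eps$ (its $7L_{\rm nc}'/8$ versus your $3L_{\rm nc}'/8$), and then applies \eqref{H0bound} and \eqref{H2bound} exactly as you do. Your explicit checks --- the sector conditions for the Hankel bounds, the choice of the constant $c$ in \eqref{H2bound}, and the geometric fact $z_2\leq 0$ on $\Gamma$ needed to apply \eqref{rechibound} to $\chi(z_1,-z_2,t\re^{\ri \pi/4})$ --- are points the paper leaves implicit, so nothing is missing.
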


\begin{proof}
Suppose $t\geq 0$ and $\bz\in \Gamma$. By \eqref{ReChiLB} and \eqref{rechibound} we have, for $s_0\in[0,L_{\rm nc}]$,
\begin{align}
\label{reChiLB1}
\real{\chi(s_0,L_{\rm nc}',t\re^{\ri \pi/4})}\geq L_{\rm nc}', \quad
\real{\chi(z_1,-z_2,t\re^{\ri \pi/4})}\geq \frac{|\bz|+t}{2}.
\end{align}
Combining \rf{reChiLB1} with \rf{chidiff} and recalling \eqref{eqn:eps_defn} gives
\begin{align}
\label{reChiLB3}
\real{\chi(s,L_{\rm nc}',t\re^{\ri \pi/4})}&\geq \frac{7L_{\rm nc}'}{8}, \qquad s\in D_\eps.
\end{align}
Thus $S_k(s,t\re^{\ri \pi/4},\bz )$ is analytic in $D_\eps $, and applying \eqref{H0bound} and \eqref{H2bound} gives \eqref{SEst}.
\end{proof}

We are now ready to prove Lemma \ref{KAnalCor}.

\begin{proof}[Proof of Lemma \ref{KAnalCor}]
The analyticity of $K(s,\bz )$ follows immediately from that of $\phi(s,t\re^{\ri \pi/4},\bz )$ and $S_k(s,t\re^{\ri \pi/4},\bz )$, and the fact that the integral \rf{KRepDeform} converges uniformly for $s\in D_\eps$ in view of the bounds in  Lemmas~\ref{ImphiCor} and~\ref{SkLem} (see, e.g., \cite[\S1.88, \S4.4]{Titchmarsh}). Further, by Lemmas~\ref{ImphiCor} and~\ref{SkLem} we have, for $s\in D_\eps$, that
\begin{align}
\label{KEstBoth}
\left|K(s,\bz )\right| %
&\leq CkL_{\rm nc}'^{-5/2}(L_{\rm nc}'+L_{\rm nc})\int_0^\infty \frac{L_{\rm nc}'+t}{(|\bz|+t)^{1/2}}\exp\left[-\frac{kL_{\rm nc}'t}{2\sqrt{2}\sqrt{L_{\rm nc}'^2+L_{\rm nc}^2}}\right]\,\rd t.
\end{align}
The integral in \rf{KEstBoth} is bounded above by
\begin{align*}
\int_0^\infty \frac{L_{\rm nc}'+t}{t^{1/2}}\exp\left[-\frac{kL_{\rm nc}'t}{2\sqrt{2}\sqrt{L_{\rm nc}'^2+L_{\rm nc}^2}}\right]\rd t \leq Ck^{-1/2},
\end{align*}
for $k\geq k_0$, for some $C>0$ depending only on $L_{\rm nc}'$, $L_{\rm nc}$ and $k_0$. For $k|\bz|> 1$ a sharper upper bound is
\begin{align*}
|\bz|^{-1/2}\!\!\int_0^\infty\!\! (L_{\rm nc}'+t)\exp\left[-\frac{kL_{\rm nc}'t}{2\sqrt{2}\sqrt{L_{\rm nc}'^2+L_{\rm nc}^2}}\right]\rd t \leq C k^{-1}|\bz|^{-1/2}.
\end{align*}
Combining these two results  gives \rf{KEst1}.
\end{proof}

We can now complete the proof of Lemma \ref{v2tildeThm}.  Applying the Cauchy-Schwarz inequality to \rf{eqn:v2}, and recalling Assumption \ref{ass:1}, we estimate
\begin{align}
\label{VEstKdudn}
|\tW(s)|&\leq\left\|K(s,\cdot)\right\|_{L^2\left(\Gamma\right)}\left\|\frac{\partial u}{\partial \bn}\right\|_{L^2\left(\Gamma\right)}
\leq C_1 k \left\|K(s,\cdot)\right\|_{L^2\left(\Gamma\right)}, \qquad k\geq k_1.
\end{align}
It therefore remains to bound $\normt{K(s,\cdot)}{L^2(\Gamma)}$. But, by \rf{KEst1}, this just requires a bound on $\normt{\zeta}{L^2(\Gamma)}$. Let $\Gamma^*$ denote any one of the sides of $\Gamma$. Then it is clear that, if $\Gamma^*$ is not one of the sides of $\Gamma$ adjacent to
$\bR$, then $\int_{\Gamma^*} (\zeta(\bz ))^2\,\rd s \leq C k^{-1}$, for $k\geq k_0$. On the other hand, if $\Gamma^*$ has length $L^*$ and is adjacent to $\bR$, then
$$
\int_{\Gamma^*} (\zeta(\bz ))^2\,\rd s \leq C \int_0^{1/k} \,\rd s + C k^{-1} \int_{1/k}^{L^*} t^{-1} \, \rd t \leq C k^{-1} \log(2+k).
$$
Thus
$\normt{\zeta}{L^2(\Gamma)} \leq C k^{-1/2}\log^{1/2}(2+k)$,
so that, by \rf{KEst1},
\begin{align}
\label{KEstFinal}
\norm{K(s,\cdot)}{L^2(\Gamma)} \leq C \log^{1/2}(2+k).
\end{align}
Finally, combining \rf{VEstKdudn} and \rf{KEstFinal} proves \rf{VEst}, completing the proof of Lemma \ref{v2tildeThm}.

\subsection{Proof of Lemma \ref{v1tildeThm}}
\label{subsec:v1tildeThm}

Suppose first that $\alpha \not\in(\pi/2,3\pi/2)$, in which case both $\Psi$ and $\Psi_1$ are zero. Then, since $u^i(\by )=\re^{\ri ky_2\cos{\alpha}}$ for $\by \in \gamma'$, we have
\begin{align*}
-4\int_{\gamma'}\pdtwomix{\Phi_k(\bx ,\by )}{x_2}{y_1}u^i(\by )\,\rd s(\by ) =\re^{\ri kr} \sW(s),
\end{align*}
where
\begin{align}
\label{v1tildeRep}
 \sW(s)&=\int_0^\infty \re^{\ri k\varpi(s,y_2,\alpha)}T_k(s,y_2)\,\rd y_2,\\
\varpi(s,y_2,\alpha)&:=\chi(s,L_{\rm nc}',y_2)-\chi(s,L_{\rm nc}',0) + y_2\cos{\alpha},
\notag\\ T_k(s,y_2)&:= \frac{\ri k^2s(L_{\rm nc}'+y_2)}{[\chi(s,L_{\rm nc}',y_2)]^2}\, H^*_2(k \chi(s,L_{\rm nc}',y_2)).\notag
\end{align}
We deform the contour of integration in \rf{v1tildeRep} to $\gamma^*=\{t\re^{\ri \pi/4}:t\geq 0\}$, as in \eqref{KRepDeform}. Then, arguing as in the proofs of Lemmas \ref{ImphiCor} and \ref{SkLem}, we find that $\varpi(s,t\re^{\ri \pi/4},\alpha)$ and $T(s,t\re^{\ri \pi/4})$ are analytic as functions of $s$ in $D_\eps$, with $\eps$ given by \eqref{eqn:eps_defn}. Further, since $\cos{\alpha}\geq 0$, it follows from the calculations in the proof of Lemma \ref{ImphiCor} that
\begin{align}
\label{imvarphiEst}
\im{\varpi(s,t\re^{\ri \pi/4},\alpha)}&\geq\frac{L_{\rm nc}'t}{2\sqrt{2}\sqrt{L_{\rm nc}'^2+L_{\rm nc}^2}}, \quad s\in D_\eps, \; t \geq 0,
\end{align}
while, from \eqref{reChiLB3} and \eqref{H2bound}, it follows that, for all $k_0>0$,
$$
|T_k(s,t\re^{\ri \pi/4})|\leq Ck^{3/2}(L_{\rm nc}'+t), \nonumber
$$
if $s\in D_\eps $, $t\geq0$, and $k\geq k_0$, where $C>0$ depends only on $\Omega$ and $k_0$. Thus
\begin{align}
\label{v1tildeEst}
|\sW(s)|&\leq \int_0^\infty \re^{-k\im{\varpi(s,t\re^{\ri \pi/4},\alpha)}}|T_k(s,t\re^{\ri \pi/4})|\,\rd t \leq  Ck^{{1/2}}.
\end{align}

If $\alpha \in(\pi/2,3\pi/2)$, however, \rf{imvarphiEst} no longer holds (since $\cos{\alpha}< 0$). In this case we write $u^i=u^d + (u^i-u^d)$, and note that, by Lemma \ref{def:udDef}, for  $\by\in\gamma'$,
\begin{align*}
\label{}
u^i(\by )-u^d(\by )
& = 2\re^{\ri ky_2}h\left(\sqrt{2ky_2}\sin\left(\alpha/2\right)\right),
\end{align*}
where $h(w) := \re^{-\ri w^2}{\rm Fr}(w)$.
The function $h(w)$ is entire, and is uniformly bounded in the sector $\arg[w]\in[-\pi/2,\pi]$ (this follows from the asymptotic behaviour of the complementary error function \cite[\S7.12(i)]{DLMF}, and that $h(w) = \frac{1}{2}\re^{-\ri w^2}{\rm{erfc}}(\re^{-\ri\pi/4} w)$).  Hence a similar argument to that leading to \eqref{v1tildeEst}, but applied to $u^i-u^d$ rather than to $u^i$, shows that
\begin{align}
\label{uiminusudRep}
-4\int_{\gamma'}\pdtwomix{\Phi_k(\bx ,\by )}{x_2}{y_1}(u^i(\by )-u^d(\by ))\,\rd s(\by ) =\re^{\ri kr} \sW(s),
\end{align}
with $\sW(s)$ analytic in $D_\eps $, satisfying \rf{v1tildeEst} with $\varpi(s,t\re^{\ri \pi/4},\alpha)$ replaced by $\varpi(s,t\re^{\ri \pi/4},0)$ and $T_k(s,t\re^{\ri \pi/4})$ replaced by $2T_k(s,t\re^{\ri \pi/4})h\left(\sqrt{2kt}\sin\left(\alpha/2\right)\re^{\ri \pi/8}\right)$; in particular, $|\sW(s)|\leq C k^{1/2}$ for $k\geq k_0$ and $s\in D_\eps$, where $C$ depends only on $k_0$ and $\Omega$.
The next result deals with the remaining term, $4\int_{\gamma'}(\partial^2\Phi_k(\bx ,\by )/\partial x_2 \partial y_1)u^d(\by )\,\rd s(\by )$.
\begin{prop}
\label{udLem}
\begin{align}
\label{udRep}
u^d(\bx )&=\Psi_2(\bx )-2\int_{\gamma'}\frac{\partial\Phi_k(\bx ,\by )}{\partial y_1}u^d(\by )\,\rd s(\by ), \qquad x_1<0,
\end{align}
where
\begin{align*}
\label{}
\Psi_2(\bx ):=
\begin{cases}
0, & 0\leq\alpha\leq\pi,\\
u^i(\bx )+(u^i)'(\bx ) %
=-2\ri \re^{\ri kx_2\cos{\alpha}}\sin{(kx_1\sin{\alpha})}, & \pi< \alpha < 2\pi.
\end{cases}
\end{align*}
\end{prop}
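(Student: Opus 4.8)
The plan is to obtain \rf{udRep} as a Green's representation formula for $u^d$ in the half-plane $\Pi:=\{\bx:x_1<0\}$, whose boundary is the line $x_1=0$, following closely the argument used to prove Lemma~\ref{uRepThmQ}(ii). The Dirichlet Green's function for $\Pi$ is, by the method of images, $G^{\rm hp}_k(\bx,\by):=\Phi_k(\bx,\by)-\Phi_k(\bx,\by')$, with $\by'=(-y_1,y_2)$; this vanishes on $x_1=0$ and is radiating in $\by$. Differentiating $\Phi_k(\bx,\by')$ through the reflection and evaluating at $y_1=0$ shows that, on the boundary, $\partial G^{\rm hp}_k(\bx,\by)/\partial\bn(\by)=-2\,\partial\Phi_k(\bx,\by)/\partial y_1$, where $\bn=(-1,0)$ is the unit normal directed into $\Pi$. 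Since $u^d$ satisfies the Helmholtz equation in $\Pi$ and vanishes on the knife edge $\{(0,x_2):x_2<0\}\subset\partial\Pi$ (because $u^d(r,0,\alpha)=0$ by Lemma~\ref{def:udDef}), a formal application of Green's representation theorem over $\Pi$ (with $\bn$ directed into $\Pi$, as in \eqref{usRepQ}) would give $u^d(\bx)=\int_{\gamma'}\bigl(\partial G^{\rm hp}_k/\partial\bn\bigr)u^d\,\rd s=-2\int_{\gamma'}\bigl(\partial\Phi_k/\partial y_1\bigr)u^d\,\rd s$, the only boundary contribution coming from $\gamma'=\{(0,x_2):x_2>0\}$, where $u^d$ need not vanish. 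The term $\Psi_2$ is missing from this formal identity precisely because $u^d$ is not radiating: it contains the incident plane wave, and, for some $\alpha$, its reflection in $x_1=0$.

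To make this rigorous and to recover $\Psi_2$, I would regularise exactly as in Lemma~\ref{uRepThmQ}(ii). Let $\bx_R:=R(\sin\alpha,-\cos\alpha)$ and $C_R:=\re^{-\ri\pi/4}\sqrt{8\pi kR}\,\re^{-\ri kR}$, and let $u^d_R$ denote the solution of the knife-edge diffraction problem with the point-source incident field $u^i_R(\bx):=C_R\Phi_k(\bx,\bx_R)$ replacing $u^i$. Then $u^d_R$ vanishes on the knife edge, $u^d_R-u^i_R$ is radiating, and $u^i_R\to u^i$ locally uniformly as $R\to\infty$, so that $u^d_R\to u^d$ pointwise. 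Since $u^i_R$ is itself radiating away from $\bx_R$, Green's second identity applied to $u^d_R$ and $G^{\rm hp}_k(\bx,\cdot)$ over $\{|\by|<R'\}\cap\Pi$ is valid, the semicircular arc contribution vanishing as $R'\to\infty$ by the radiation conditions. Tracking the interior sources $(\Delta+k^2)u^d_R=-C_R\delta_{\bx_R}$ and $(\Delta+k^2)G^{\rm hp}_k(\bx,\cdot)=-\delta_{\bx}$ then yields, for $\bx\in\Pi$, that $u^d_R(\bx)=C_R G^{\rm hp}_k(\bx,\bx_R)-2\int_{\gamma'}\bigl(\partial\Phi_k/\partial y_1\bigr)u^d_R\,\rd s$, the first term being present exactly when $\bx_R\in\Pi$, i.e.\ when $\sin\alpha<0$, i.e.\ when $\alpha\in(\pi,2\pi)$.

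It then remains to let $R\to\infty$. The integral converges to $-2\int_{\gamma'}\bigl(\partial\Phi_k/\partial y_1\bigr)u^d\,\rd s$ by dominated convergence, using the decay of $\partial\Phi_k/\partial y_1$ along $\gamma'$ and the boundedness of $u^d_R$. For the singular term, $G^{\rm hp}_k(\bx,\bx_R)=\Phi_k(\bx,\bx_R)-\Phi_k(\bx,\bx_R')$ with $\bx_R'=R(-\sin\alpha,-\cos\alpha)$, and the point-source-to-plane-wave limit used in Lemma~\ref{uRepThmQ}(ii) gives $C_R\Phi_k(\bx,\bx_R)\to u^i(\bx)$ and $C_R\Phi_k(\bx,\bx_R')\to\re^{\ri k(x_1\sin\alpha+x_2\cos\alpha)}=-(u^i)'(\bx)$, whence $C_R G^{\rm hp}_k(\bx,\bx_R)\to u^i(\bx)+(u^i)'(\bx)=\Psi_2(\bx)$ when $\alpha\in(\pi,2\pi)$, and the term is absent otherwise. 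Combining these limits gives \rf{udRep}; the boundary values $\alpha=\pi$ and $\alpha=2\pi$, where $\bx_R$ lies on $x_1=0$, are recovered by continuity by letting $\alpha\to\pi^+$ and $\alpha\to 2\pi^-$, the two branches of $\Psi_2$ agreeing (both equal $0$) at $\alpha=\pi$. I expect the main obstacle to be the careful justification of the auxiliary point-source problem for $u^d_R$ — its existence and radiation behaviour, and the convergence $u^d_R\to u^d$ — together with getting every sign right in the image construction; the arc estimate and the dominated-convergence argument are routine given the template of Lemma~\ref{uRepThmQ}(ii).
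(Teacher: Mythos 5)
Your strategy is genuinely different from the paper's, and its bookkeeping is correct as far as it goes: the image Green's function $G^{\rm hp}_k$, the identity $\partial G^{\rm hp}_k/\partial\bn(\by)=-2\,\partial\Phi_k/\partial y_1$ on $x_1=0$, the observation that the source term appears precisely when $\bx_R\in\Pi$, i.e.\ when $\alpha\in(\pi,2\pi)$, the limit $C_R G^{\rm hp}_k(\bx,\bx_R)\to u^i(\bx)+(u^i)'(\bx)=\Psi_2(\bx)$, and the continuity argument at $\alpha=\pi,2\pi$ are all right. The genuine gap is the step ``$u^i_R\to u^i$ locally uniformly as $R\to\infty$, so that $u^d_R\to u^d$ pointwise''. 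This inference fails as stated, and it is exactly where the analytical difficulty of the proposition lives. The knife edge is an \emph{unbounded} scatterer: $u^i_R\to u^i$ only locally uniformly, and on the edge itself the discrepancy is $O(1)$ at distances comparable to $R$ from the tip, so no off-the-shelf continuous-dependence result for the scattering problem applies; well-posedness and stability for scattering by unbounded boundaries is precisely the delicate theory that the paper's reference \cite{HalfPlaneRep} exists to supply. To close the gap you would need either the explicit (Macdonald-type) solution for point-source diffraction by a Dirichlet half-line, with uniform asymptotics showing convergence to the Sommerfeld solution $u^d$ as the source recedes, together with a uniform-in-$R$ bound for $u^d_R$ on $\gamma'$ (which your dominated-convergence step also silently uses), or a uniqueness/stability framework for half-plane problems---i.e.\ machinery of the same order as what you hoped to avoid. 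The same remark applies to your unproved assertions that $u^d_R$ exists, is unique, and satisfies the radiation condition uniformly.

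The paper sidesteps all of this by never perturbing the scattering problem. It applies a half-plane representation theorem \cite[Theorems 3.1, 3.2]{HalfPlaneRep} directly to $u^d$, after temporarily giving $k$ a positive imaginary part so that $u^d$ (for $0\le\alpha\le\pi/2$), or $u^d-u^i$ plus a separate known result for the plane wave $u^i$ (for $\pi/2<\alpha\le\pi$), is bounded in $x_1<0$, and then passes to real $k$ by limiting absorption. The range $\pi<\alpha<2\pi$ is handled by a reflection symmetry: the solution $\tilde u^d$ for incidence direction $2\pi-\alpha$ satisfies $\tilde u^d=u^d-u^i-(u^i)'$ and agrees with $u^d$ on $\gamma'$, which is exactly where $\Psi_2$ comes from. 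Note that your point-source device is the one used in Lemma~\ref{uRepThmQ}(ii), but there it is applied to the explicit field $u^i_R$ alone, with no scattering problem attached, so no convergence-of-solutions issue arises; transplanting it to $u^d$ imports a difficulty of an entirely different order.
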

\begin{proof}Suppose first that $0\leq \alpha \leq \pi/2$. Let $k$ temporarily have a positive imaginary part. Then it is straightforward to show that $u^d$ is uniformly bounded in the half-plane $x_1<0$, and hence \rf{udRep} follows from \cite[Theorem 3.1]{HalfPlaneRep} for $\im{k}>0$, and for $k>0$ by taking the limit as $\im{k}\to 0$ \cite[Theorem 3.2]{HalfPlaneRep}.

For $\pi/2< \alpha \leq \pi$ this argument fails, since if $k$ has a positive imaginary part, $|u^d(\bx)|$ grows exponentially as $x_2\to \infty$ for any fixed $x_1<0$. However, the argument does provide a proof that \rf{udRep} holds (with $\Psi_2=0$) with $u^d$ replaced by $u^d-u^i$, which is uniformly bounded in $x_1<0$. Also, the analysis of \cite[p.~193]{HalfPlaneRep} shows that \rf{udRep} holds (with $\Psi_2=0$) with $u^d$ replaced by the plane wave $u^i$. Adding together these two results proves \rf{udRep}.

The above two paragraphs prove \rf{udRep} for the case $0\leq \alpha \leq \pi$ when $\Psi_2=0$. For $\pi<\alpha<2\pi$, the above arguments allow us to prove \rf{udRep} (with $\Psi_2=0$) with $u^d$ replaced by $\tilde{u}^d$, the solution to the knife edge scattering problem of Figure \ref{fig-1}(b) corresponding to the incident direction $2\pi-\alpha\in(0,\pi)$. Since $\tilde{u}^d =u^d - u^i - (u^i)'$,
 and in particular $\tilde{u}^d=u^d$ on $\gamma'$, this implies that
\begin{align*}
u^d(\bx )-u^i(\bx )-(u^i)'(\bx )=-2\int_{\gamma'}\frac{\partial\Phi_k(\bx ,\by )}{\partial y_1}u^d(\by )\,\rd s(\by ), \quad x_1<0, %
\end{align*}
i.e.\ that \rf{udRep} holds for $\pi<\alpha<2\pi$.
\end{proof}

For  $\alpha \in(\pi/2,3\pi/2)$  we have $2\pdonetext{\Psi_2}{\bn} = \pdonetext{\Psi_1}{\bn}$ on $\Gamma_{\rm nc}$, and so it follows from Proposition \ref{udLem} that, for $\bx\in \Gamma_{\rm nc}$,
\begin{align*}
-4\int_{\gamma'}\pdtwomix{\Phi_k(\bx ,\by )}{x_2}{y_1} u^d(\by )\,\rd s(\by ) = 2\pdone{u^d}{\bn}(\bx )- \pdone{\Psi_1}{\bn}(\bx ). %
\end{align*}
Recalling \rf{uiminusudRep}, this completes the proof of Lemma \ref{v1tildeThm},
and hence of Theorem \ref{Gamma2Thm}.

\section{hp Approximation Space and Approximation Results}
\label{sec:ApproxSpace}

We now design an $hp$ approximation space for the numerical solution of \rf{eqn:op061210}, based on the regularity results provided by Theorems \ref{thm:1} and \ref{Gamma2Thm}.
Rather than approximating $\pdonetext{u}{\bn}$ itself (as in conventional methods),
we will approximate
\begin{equation}
  \varphi(\bx) := \frac{1}{k}\left(\frac{\partial u}{\partial\bn}(\bx)-\Psi(\bx)\right), \quad \bx\in\Gamma,
  \label{eqn:varphi}
\end{equation}
which represents the difference between $\partial u/\partial\bn$ and the known leading order high frequency behaviour~$\Psi$ (cf.\ Remarks~\ref{rem:PO} and \ref{rem:LO}), scaled by $1/k$ so that $\varphi$ is nondimensional. This leading order behaviour is as defined in Theorems \ref{thm:1} and \ref{Gamma2Thm}. Thus, on a convex side, $\Psi:=2\pdonetext{u^i}{\bn}$ if the side is illuminated and $\Psi:=0$ otherwise. On a nonconvex side, $\Psi:=2u^i(\bR)\pdonetext{u^d}{\bn}$ if $\pi/2\leq\alpha\leq3\pi/2,$ and $\Psi:=0$ otherwise; here $u^d$ is defined as in Lemma \ref{def:udDef} in terms of the local variables $r,\theta,\alpha$ of Figure \ref{fig-1}(a) (as remarked previously, any nonconvex side can be transformed to the configuration in Figure \ref{fig-1}(a) by a suitable rotation and reflection of $\Omega$). The factor $u^i(\bR)$ is a phase shift arising because the origin of the global coordinates $\bx$ may not be located at the point $\bR$, as was assumed in Theorem \ref{Gamma2Thm}.

Furthermore, instead of approximating $\varphi$ directly by conventional piecewise polynomials, on each side of the polygon we use the appropriate representation~\rf{Decomp} or \rf{dudnGamma2Rep}, with the non-oscillatory coefficients $v^\pm$ and $v$ replaced by piecewise polynomial approximations supported on overlapping meshes, graded towards corner singularities (where these are present). Before detailing the approximation space, we introduce some notation.

\begin{defn}%
\label{GeomDefMain}
Given $-\infty<a<b<\infty$ and an integer $p\geq 0$, let $\mathcal{P}_p(a,b)$ denote the space of polynomials on $(a,b)$ of degree $\leq p$.
Given $A>0$ %
and an integer $n\geq 1$ we denote by $\mathcal{G}_n(0,A)=\{x_0,x_1,\ldots,x_n\}$ the geometric mesh on $[0,A]$ with $n$ layers, whose meshpoints $x_i$ are defined by%
\[
  x_0:=0, \qquad x_i := \sigma^{n-i}A, \quad i=1,2,\ldots,n,
\]
where $0<\sigma<1$ is a fixed grading parameter.
We denote by $\mathcal{P}_{p,n}(0,A)$ the space of piecewise polynomials on $\mathcal{G}_n(0,A)$ with degree $\leq p$, i.e.
\begin{align*}
\label{}
\mathcal{P}_{p,n}(0,A):=&\left\{\rho:[0,A]\to \C \,: \,\rho|_{(x_{i-1},x_i)} \in \mathcal{P}_{p}(x_{i-1},x_i),\, i=1,\ldots,n \right \}.
\end{align*}
\end{defn}
A smaller $\sigma$ represents a more severe grading. While $\sigma= (\sqrt{2}-1)^2\approx0.17$ is in some sense an optimal choice, e.g., \cite[p.96]{Sc:98},
it is common practice to slightly ``overrefine'' by taking $\sigma = 0.15$; we use this value in the computations
in~\S\ref{sec:num}.

For simplicity we use the same polynomial degree $p$ and the same number of layers $n$ in each graded mesh in our approximation space. We also assume that
\begin{align}
  \label{npAssump}
  n\geq cp,
\end{align}
for some fixed constant $c>0$.

On a convex side $\Gamma_{\rm c}$, we recall from \rf{Decomp} that
\begin{align*}
\label{}
\varphi(\bx(s)) =\frac{1}{k}\left( v^+(s)\re^{\ri ks} + v^-(L_{\rm c}-s)\re^{-\ri ks}\right), \qquad s\in [0,L_{\rm c}],
\end{align*}
where the coefficients $v^+(s)$ and $v^-(L_{\rm c}-s)$ are singular at $s=0$ and $s=L_{\rm c}$, respectively. To approximate $\varphi$ on $\Gamma_{\rm c}$ we approximate $v^+(s)\approx \rho^+(s)$ and $v^-(L_{\rm c}-s)\approx \rho^-(L_{\rm c}-s)$, for some $\rho_\pm\in\mathcal{P}_{p,n}(0,L_{\rm c})$.

On a nonconvex side $\Gamma_{\rm c}$, we recall from \rf{dudnGamma2Rep} that
\begin{align*}
\label{}
\varphi(\bx(s)) =\frac{1}{k}\left( v^+(L_{\rm nc}+s)\re^{\ri ks} + v^-(L_{\rm nc}-s)\re^{-\ri ks} + v(s)\re^{\ri kr}\right), \qquad s\in [0,L_{\rm nc}].
\end{align*}
The coefficient $v^-(L_{\rm c}-s)$ is singular at $s=L_{\rm nc}$, but the coefficients $v^+(L_{\rm nc}+s)$ and $ v(s)$ are both analytic in a neighbourhood of $[0,L_{\rm nc}]$ and can be approximated by single polynomials supported on the whole side. To approximate $\varphi$ on $\Gamma_{\rm nc}$ we therefore approximate $v^-(L_{\rm nc}-s)\approx \rho^-(L_{\rm nc}-s)$ for some $\rho^-\in\mathcal{P}_{p,n}(0,L_{\rm nc})$, and approximate $v^+(s)\approx \rho^+(s)$ and $v(s)\approx \rho(s)$ for some $\rho^+,\rho\in\mathcal{P}_{p}(0,L_{\rm nc})$.  An illustration of the resulting meshes is given in Figure~\ref{fig:meshes}.

\begin{figure}[ht]
  \begin{center}
\subfigure[Convex side $\Gamma_{\rm c}$]{
  	\begin{tikzpicture}[line cap=round,line join=round,>=triangle 45,x=1.0cm,y=1.0cm, scale=0.95]
	\fill[line width=0pt,color=cqcqcq,fill=cqcqcq] (-1,-1)--(0,0)--(3,-0) -- (4,-1)  -- cycle;
	\draw [line width=1.2pt] (-1,-1)--(0,0)--(3,-0) -- (4,-1);
	\draw (0,0.4)--(3,0.4);
	\draw (0,0.3)--(0,0.5);
	\draw (2,0.3)--(2,0.5);
	\draw (2.7,0.3)--(2.7,0.5);
	\draw (2.9,0.3)--(2.9,0.5);
	\draw (3,0.3)--(3,0.5);
	\draw (-0.6,0.4) node {$v^+(s)$};
\begin{scope}[xscale=-1,yscale=1,xshift=-3cm,yshift=0.6cm]
	\draw (0,0.4)--(3,0.4);
	\draw (0,0.3)--(0,0.5);
	\draw (2,0.3)--(2,0.5);
	\draw (2.7,0.3)--(2.7,0.5);
	\draw (2.9,0.3)--(2.9,0.5);
	\draw (3,0.3)--(3,0.5);
\end{scope}
	\draw (-1.04,1) node {$v^-(L_{\rm c}-s)$};
	\draw [<-] (0,-0.2) -- (3,-0.2);
	\draw (1.5,-0.4) node {$s$};
\end{tikzpicture}
}
\subfigure[Nonconvex side $\Gamma_{\rm nc}$]{
	\begin{tikzpicture}[line cap=round,line join=round,>=triangle 45,x=1.0cm,y=1.0cm, scale=0.95]
		\fill[line width=0pt,color=cqcqcq,fill=cqcqcq]  (6.25,1.5)--(5,2.5)--  (5,0)-- (2,0)-- (2.54,-1) --(6.25,-1)  -- cycle;
	\draw [line width=1.2pt] (6.25,1.5)--(5,2.5)--  (5,0)-- (2,0)-- (2.54,-1);
	\draw [<-] (2.3,-0.2) -- (4.9,-0.2);
	\draw (3.7,-0.4) node {$s$};
\begin{scope}[xscale=-1,yscale=1,xshift=-5cm,yshift=0.6cm]
	\draw (0,0.4)--(3,0.4);
	\draw (0,0.3)--(0,0.5);
	\draw (2,0.3)--(2,0.5);
	\draw (2.7,0.3)--(2.7,0.5);
	\draw (2.9,0.3)--(2.9,0.5);
	\draw (3,0.3)--(3,0.5);
\end{scope}
	\draw (0.9,0.4) node {$v^+(L_{\rm nc}+s)$};
\begin{scope}[xscale=-1,yscale=1,xshift=-5cm,yshift=0cm]
	\draw (0,0.4)--(3,0.4);
	\draw (0,0.3)--(0,0.5);
	\draw (3,0.3)--(3,0.5);
\end{scope}
	\draw (0.9,1) node {$v^-(L_{\rm nc}-s)$};
\begin{scope}[xscale=-1,yscale=1,xshift=-5cm,yshift=1.2cm]
	\draw (0,0.4)--(3,0.4);
	\draw (0,0.3)--(0,0.5);
	\draw (3,0.3)--(3,0.5);
\end{scope}
	\draw (1.54,1.6) node {$v(s)$};
\end{tikzpicture}
}
\end{center}
  \caption{Illustration of the overlapping meshes.}%
  \label{fig:meshes} %
\end{figure}
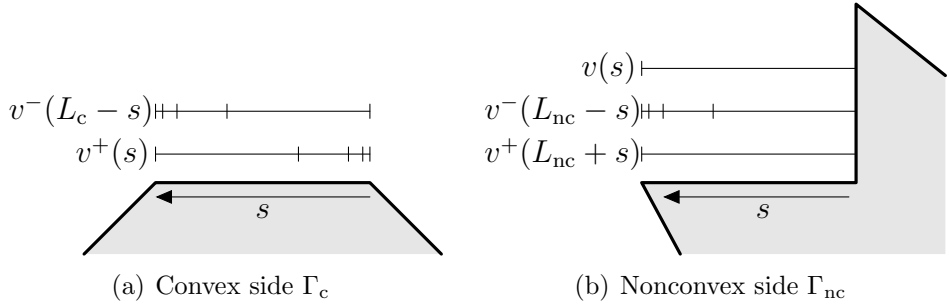

The above construction amounts to constraining the approximation to $\varphi$ to lie in a particular  finite-dimensional approximation space $V_{N,k}\subset L^2(\Gamma)$, of dimension $N$ (the total number of degrees of freedom), given by
\begin{align}
\label{NDof}
N = (p+1)(2n n_{\rm c}+ (n+2) n_{\rm nc}),
\end{align}
where $n_{\rm c}$ and $n_{\rm nc}$ denote the number of convex and nonconvex sides, respectively.

For $a <b$ and $r>b-a$, let
\begin{align}
\label{EllipseDef}
\mathcal{E}_{a,b,r}:=\left\{ w\in\mathbb{C}: |w-a|+|w-b|<r \right\},
\end{align}
the interior of an ellipse with foci $\{a,b\}$. Our best approximation estimates are based on the following standard result, which follows from \cite[Theorem 2.1.1]{Stenger}.

\begin{lem}
\label{EllipseLem}
If the function $g$ is analytic and bounded in $\mathcal{E}_{a,b,r}$, for some $a,b,r\in \mathbb{R}$ with $a <b$ and $r>b-a$,
then
\begin{align*}
\label{}
\inf_{v'\in \mathcal{P}_p(a,b)}\norm{g-v'}{L^\infty(a,b)}\leq\frac{2}{\rho-1}\rho^{-p}\norm{g}{L^\infty(\mathcal{E}_{a,b,r})},
\end{align*}
where $\rho =(r+\sqrt{r^2-(b-a)^2})/(b-a)>1$.
\end{lem}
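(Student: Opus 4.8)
The plan is to reduce the statement to the classical Bernstein-type estimate for best polynomial approximation of a function analytic in a Bernstein ellipse --- which is the content of \cite[Theorem 2.1.1]{Stenger} --- by means of an affine change of variables carrying the general interval $[a,b]$ to the canonical interval $[-1,1]$ and the ellipse $\mathcal{E}_{a,b,r}$ to a standard Bernstein ellipse.

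First I would introduce the affine map $\phi(w):=(2w-(a+b))/(b-a)$, which sends $[a,b]$ bijectively onto $[-1,1]$ and the foci $a,b$ to $-1,1$ respectively. Since $\phi$ scales all distances by the factor $2/(b-a)$, the defining condition $|w-a|+|w-b|<r$ of $\mathcal{E}_{a,b,r}$ becomes $|\zeta+1|+|\zeta-1|<2r/(b-a)$ in the image variable $\zeta=\phi(w)$. This is exactly the open Bernstein ellipse $\mathcal{B}_\rho$ with foci $\pm1$ whose sum of semi-axes $\rho>1$ satisfies $\rho+\rho^{-1}=2r/(b-a)$; solving this quadratic and taking the root exceeding $1$ gives $\rho=(r+\sqrt{r^2-(b-a)^2})/(b-a)$, precisely the quantity in the statement (the hypothesis $r>b-a$ makes the discriminant positive and forces $\rho>1$). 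Thus $\phi$ maps $\mathcal{E}_{a,b,r}$ onto $\mathcal{B}_\rho$.

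Next I would set $\tilde g:=g\circ\phi^{-1}$, which is analytic and bounded in $\mathcal{B}_\rho$ with $\norm{\tilde g}{L^\infty(\mathcal{B}_\rho)}=\norm{g}{L^\infty(\mathcal{E}_{a,b,r})}$, as $\phi$ is a bijection between the two ellipses. Applying \cite[Theorem 2.1.1]{Stenger} to $\tilde g$ on $[-1,1]$ then yields a polynomial $\tilde v$ of degree at most $p$ with
\begin{equation*}
\norm{\tilde g-\tilde v}{L^\infty(-1,1)}\leq\frac{2}{\rho-1}\rho^{-p}\norm{\tilde g}{L^\infty(\mathcal{B}_\rho)}.
\end{equation*}
The mechanism behind this estimate is that analyticity in $\mathcal{B}_\rho$ forces the Chebyshev coefficients $a_n$ of $\tilde g$ to decay like $|a_n|\leq 2\rho^{-n}\norm{\tilde g}{L^\infty(\mathcal{B}_\rho)}$, and truncating the Chebyshev series at degree $p$ leaves the geometric tail $\sum_{n=p+1}^\infty\rho^{-n}=\rho^{-p}/(\rho-1)$, which produces the constant $2\rho^{-p}/(\rho-1)$. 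Finally I would pull back, setting $v':=\tilde v\circ\phi\in\mathcal{P}_p(a,b)$ (an affine reparametrisation preserves the polynomial degree). Since $\phi$ maps $(a,b)$ bijectively onto $(-1,1)$, the supremum norm is invariant, so $\norm{g-v'}{L^\infty(a,b)}=\norm{\tilde g-\tilde v}{L^\infty(-1,1)}$ and the bound transfers verbatim.

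The work here is essentially bookkeeping rather than a genuine obstacle: the one point requiring care is the geometric identification in the first step --- verifying that the sum-of-focal-distances ellipse $\mathcal{E}_{a,b,r}$ corresponds under $\phi$ to the Bernstein ellipse with parameter exactly equal to the $\rho$ of the statement, and that the tail constant is exactly $2/(\rho-1)$. One should also observe that boundedness of $g$ in the \emph{open} ellipse suffices: if $\norm{g}{L^\infty(\mathcal{E}_{a,b,r})}$ is finite one applies the estimate on each slightly smaller closed ellipse $\mathcal{B}_{\rho'}$ with $\rho'<\rho$ and lets $\rho'\to\rho$, the right-hand side converging to the stated bound.
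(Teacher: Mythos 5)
Your proposal is correct and takes essentially the same route as the paper: the paper offers no argument beyond citing \cite[Theorem 2.1.1]{Stenger}, and your affine map carrying $[a,b]$ to $[-1,1]$ and $\mathcal{E}_{a,b,r}$ to the Bernstein ellipse of parameter $\rho=(r+\sqrt{r^2-(b-a)^2})/(b-a)$, followed by the Chebyshev-coefficient tail bound giving the constant $2\rho^{-p}/(\rho-1)$, is exactly the standard reduction that citation presupposes. Your handling of the open-ellipse boundedness by exhausting with smaller ellipses is also the correct way to close that minor point.
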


Lemma \ref{EllipseLem} implies the following best approximation results for the two nonsingular terms in the representation on a nonconvex side. %
\begin{thm}
\label{vjPIThm}
Suppose that Assumption \ref{ass:1} holds. Then, for every $k_0>0$,
for the approximation of $v^+(L_{\rm nc}+s)$ on a nonconvex side $\Gamma_{\rm nc}$ we have
\[%
\inf_{v'\in \mathcal{P}_{p}(0,L_{\rm nc})}\normt{v^+(L_{\rm nc}+\cdot)-v'}{L^2(0,L_{\rm nc})}\leq C\uM k^{1/2}\re^{-p\tau}, \quad k\geq k_0,\nonumber
\]%
where $\tau=\log{(2+\sqrt{3})}$ and $C>0$ depends only on $\Omega$ and $k_0$.
\end{thm}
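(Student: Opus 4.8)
The plan is to reduce the statement to a single application of the one-dimensional best approximation result Lemma~\ref{EllipseLem}, applied to the function $g(s):=v^+(L_{\rm nc}+s)$ on the interval $(0,L_{\rm nc})$. The crucial structural observation is that, although $v^+$ is singular at the corner (i.e.\ at argument $0$), in the term $v^+(L_{\rm nc}+s)$ this singularity is pushed to $s=-L_{\rm nc}$, a fixed distance $L_{\rm nc}$ to the left of the approximation interval. By paragraph~(ii) of Theorem~\ref{Gamma2Thm}, $v^+$ is analytic in $\real{s}>0$ and obeys the bound \rf{vjpmBounds}; hence $g$ is analytic in the half-plane $\real{s}>-L_{\rm nc}$, and in particular in a $k$-independent neighbourhood of $[0,L_{\rm nc}]$. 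This is precisely why a single (ungraded) polynomial suffices here, in contrast to the singular term $v^-$.

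First I would fix the ellipse. Taking $a=0$, $b=L_{\rm nc}$ and $r=2L_{\rm nc}$ in \rf{EllipseDef} gives $\rho=(r+\sqrt{r^2-(b-a)^2})/(b-a)=2+\sqrt{3}$, so that $\rho^{-p}=\re^{-p\tau}$ with $\tau=\log(2+\sqrt{3})$, which is exactly the claimed rate. It then remains to check that $\mathcal{E}_{0,L_{\rm nc},2L_{\rm nc}}$ lies inside the domain of analyticity of $g$: this ellipse has centre $L_{\rm nc}/2$ and semi-major axis $r/2=L_{\rm nc}$, so its leftmost point is at $-L_{\rm nc}/2>-L_{\rm nc}$, confirming $\mathcal{E}_{0,L_{\rm nc},2L_{\rm nc}}\subset\{\real{s}>-L_{\rm nc}\}$ with room to spare.

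Next I would bound $\norm{g}{L^\infty(\mathcal{E}_{0,L_{\rm nc},2L_{\rm nc}})}$ using \rf{vjpmBounds}. Setting $w=L_{\rm nc}+s$, as $s$ ranges over the ellipse $w$ ranges over a translate on which $\real{w}\geq L_{\rm nc}/2$, so that $|w|\geq L_{\rm nc}/2>0$ and $v^+(w)$ is evaluated only where $\real{w}>0$, i.e.\ where \rf{vjpmBounds} is valid. Since $\delta^+\in(0,1/2)$, for $0<k|w|\leq1$ one has $(k|w|)^{-\delta^+}\leq(k|w|)^{-1/2}$, so in both regimes of \rf{vjpmBounds} the estimate collapses to $|v^+(w)|\leq C\uM k(k|w|)^{-1/2}=C\uM k^{1/2}|w|^{-1/2}$. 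Combined with $|w|\geq L_{\rm nc}/2$ this gives $\norm{g}{L^\infty(\mathcal{E}_{0,L_{\rm nc},2L_{\rm nc}})}\leq C\uM k^{1/2}$, with $C$ depending only on $L_{\rm nc}$ and $k_0$, hence on $\Omega$ and $k_0$.

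Finally I would assemble the pieces: Lemma~\ref{EllipseLem} yields $\inf_{v'\in\mathcal{P}_p(0,L_{\rm nc})}\norm{g-v'}{L^\infty(0,L_{\rm nc})}\leq \tfrac{2}{\rho-1}\re^{-p\tau}\norm{g}{L^\infty(\mathcal{E}_{0,L_{\rm nc},2L_{\rm nc}})}\leq C\uM k^{1/2}\re^{-p\tau}$, and the elementary embedding $\norm{f}{L^2(0,L_{\rm nc})}\leq L_{\rm nc}^{1/2}\norm{f}{L^\infty(0,L_{\rm nc})}$ converts this to the claimed $L^2$ estimate (absorbing $L_{\rm nc}^{1/2}$ into $C$). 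There is no genuine obstacle here beyond bookkeeping; the only point requiring care is that the $k^{1/2}$ bound on the ellipse must hold uniformly for all $k\geq k_0$, which succeeds precisely because the lower bound $|w|\geq L_{\rm nc}/2$ is independent of $k$ and eventually dominates $1/k$, so that whichever branch of \rf{vjpmBounds} is active the same power $|w|^{-1/2}$ controls the growth.
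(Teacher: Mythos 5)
Your proposal is correct and follows essentially the same route as the paper's own proof: both apply Lemma~\ref{EllipseLem} to $g(s)=v^+(L_{\rm nc}+s)$ on the ellipse $\mathcal{E}_{0,L_{\rm nc},2L_{\rm nc}}$ (giving $\rho=2+\sqrt{3}$), bound $g$ there via \rf{vjpmBounds} and the analyticity of $v^+$ in $\real{s}>0$, and convert the $L^\infty$ estimate to $L^2$. The paper states this more tersely, but your added details (the $k$-uniform collapse of the two regimes of \rf{vjpmBounds} to $C\uM k^{1/2}|w|^{-1/2}$ using $|w|\geq L_{\rm nc}/2$) are exactly the bookkeeping it leaves implicit.
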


\begin{proof} By Theorem~\ref{Gamma2Thm}, $v_+(s)$ is analytic in $\real{s}>0$ where it satisfies the bound \rf{vjpmBounds}. Thus $g(s):=v^+(L_{\rm nc}+s)$ is analytic in $\real{s}>-L_{\rm nc}$, in particular analytic and bounded in $\real{s}>-L_{\rm nc}/2$, which contains the ellipse $\mathcal{E}_{0,L_{\rm nc},r}$ with $r=2L_{\rm nc}$. Thus combining Lemma \ref{EllipseLem} with \rf{vjpmBounds} gives
\begin{align*}
\inf_{v'\in \mathcal{P}_{p}(0,L_{\rm nc})}\normt{v^+(L_{\rm nc}+\cdot)-v'}{L^\infty(0,L_{\rm nc})}
& \leq C\uM k^{1/2}\rho^{-p},
\end{align*}
with $\rho = 2+\sqrt{3}$, from which the result follows. %
\end{proof}

\begin{thm}
\label{vjtildeThm}
Suppose that Assumption \ref{ass:1} holds.
Then, for the approximation of $v(s)$ on a nonconvex side $\Gamma_{\rm nc}$, we have
\[%
\inf_{v'\in \mathcal{P}_{p}(0,L_{\rm nc})}\norm{v-v'}{L^2(0,L_{\rm nc})}\leq CC_1k\log^{1/2}(2+k)\re^{-p\tau},\quad k\geq k_1,%
\]%
where $\tau=\log{(\sqrt{1+(2\eps/L_{\rm nc})^2} + 2\eps/L_{\rm nc})}$, $\eps$ is given by \eqref{eqn:eps_defn}, $k_1>0$ and $C_1>0$ are as in Assumption \ref{ass:1}, and $C>0$ depends only on $\Omega$ and $k_1$.
\end{thm}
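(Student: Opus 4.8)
The plan is to follow the proof of Theorem \ref{vjPIThm} almost verbatim, the only substantive difference being that I exploit the analyticity of $v$ supplied by part (iii) of Theorem \ref{Gamma2Thm} (analyticity in the neighbourhood $D_\eps$ of $[0,L_{\rm nc}]$, together with the bound \rf{tildevEst}) in place of the half-plane analyticity used for $v^+$. Concretely, I would: (a) inscribe in $D_\eps$ the ellipse $\mathcal{E}_{0,L_{\rm nc},r}$ with foci $\{0,L_{\rm nc}\}$ and a suitably chosen $r$; (b) apply Lemma \ref{EllipseLem} with $g=v$, $a=0$, $b=L_{\rm nc}$, bounding $\normt{v}{L^\infty(\mathcal{E}_{0,L_{\rm nc},r})}$ by \rf{tildevEst}; and (c) pass from the resulting $L^\infty(0,L_{\rm nc})$ best-approximation bound to the claimed $L^2$ bound using $\normt{f}{L^2(0,L_{\rm nc})}\leq L_{\rm nc}^{1/2}\normt{f}{L^\infty(0,L_{\rm nc})}$, absorbing $L_{\rm nc}^{1/2}$ into the $\Omega$-dependent constant.

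The value of $r$ is dictated by the target exponent $\tau$. I would take $r=\sqrt{L_{\rm nc}^2+4\eps^2}$, so that the ellipse has semi-major axis $r/2$, focal half-distance $L_{\rm nc}/2$, and hence semi-minor axis $\sqrt{(r/2)^2-(L_{\rm nc}/2)^2}=\eps$. With this choice the constant $\rho$ of Lemma \ref{EllipseLem} is
\[
\rho=\frac{r+\sqrt{r^2-L_{\rm nc}^2}}{L_{\rm nc}}=\frac{\sqrt{L_{\rm nc}^2+4\eps^2}+2\eps}{L_{\rm nc}}=\sqrt{1+(2\eps/L_{\rm nc})^2}+2\eps/L_{\rm nc},
\]
so that $\rho^{-p}=\re^{-p\tau}$ with precisely the stated $\tau=\log\rho$, and the prefactor $2/(\rho-1)$ depends only on $\eps/L_{\rm nc}$, hence only on $\Omega$.

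The one point requiring genuine (if short) verification is that $\mathcal{E}_{0,L_{\rm nc},r}\subseteq D_\eps$. Here $D_\eps$ is the open $\eps$-neighbourhood of the segment $[0,L_{\rm nc}]$, and I would check directly that the distance from a point of the ellipse to $[0,L_{\rm nc}]$ is maximised at the minor-axis vertices $L_{\rm nc}/2\pm\ri\eps$, where it equals $\eps$, or at the major-axis vertices $L_{\rm nc}/2\pm r/2$, where it equals $(r-L_{\rm nc})/2$. Since $(L_{\rm nc}+2\eps)^2=L_{\rm nc}^2+4L_{\rm nc}\eps+4\eps^2\geq L_{\rm nc}^2+4\eps^2=r^2$ we have $(r-L_{\rm nc})/2<\eps$, and a brief check shows the minor-axis vertices are the only boundary points at distance exactly $\eps$; hence the open ellipse lies strictly inside the open neighbourhood $D_\eps$ with no constraint on the ratio $L_{\rm nc}'/L_{\rm nc}$. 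Consequently $v$ is analytic and bounded on $\mathcal{E}_{0,L_{\rm nc},r}$ with $\normt{v}{L^\infty(\mathcal{E}_{0,L_{\rm nc},r})}\leq CC_1k\log^{1/2}(2+k)$ for $k\geq k_1$ by \rf{tildevEst}, and combining this with Lemma \ref{EllipseLem} and the $L^\infty$-to-$L^2$ estimate yields the theorem. The main obstacle is really only this geometric containment claim, which fixes the exact $\tau$; once $r$ is pinned down the rest is a direct application of Lemma \ref{EllipseLem} and \rf{tildevEst}, exactly as for the nonsingular term $v^+$ in Theorem \ref{vjPIThm}.
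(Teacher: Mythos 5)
Your proposal is correct and follows essentially the same route as the paper's (two-sentence) proof: analyticity and boundedness of $v$ in $D_\eps$ from Theorem \ref{Gamma2Thm}, the inscribed ellipse with foci $\{0,L_{\rm nc}\}$ and semi-minor axis $\eps$, Lemma \ref{EllipseLem} with the bound \rf{tildevEst}, and the trivial $L^\infty$-to-$L^2$ step. In fact you supply details the paper elides — the explicit choice $r=\sqrt{L_{\rm nc}^2+4\eps^2}$ reproducing the stated $\tau$, and the verification that this ellipse lies in the open neighbourhood $D_\eps$ — which also quietly corrects the paper's notational slip in writing ``$\mathcal{E}_{0,L_{\rm nc},\eps}$'' for that ellipse.
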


\begin{proof}
By Theorem~\ref{Gamma2Thm}, $v(s)$ is analytic and bounded in $D_\eps \supset \mathcal{E}_{0,L_{\rm nc},\eps}$.  The result follows by combining Lemma \ref{EllipseLem} with \rf{tildevEst}.
\end{proof}

The remaining terms all have singularities associated with corner singularities requiring geometric mesh refinement. Let
\begin{align}
\label{}
\delta_*:=1-\pi/\omega_{\rm min}\in(0,1/2),
\end{align}
where $\omega_{\rm min}$ denotes the smallest of the exterior angles of the polygon that are larger than $\pi$. Arguing as in \cite[\S 5]{HeLaMe:11} one can use Lemma \ref{EllipseLem} to prove:%
\begin{thm}[cf.~{\cite[Theorem 5.4]{HeLaMe:11}}]
\label{vjPMThm}
If \rf{npAssump} holds, then, for every $k_0>0$, for the approximation of $v^+(s)$ and $v^-(L_{\rm c}-s)$ on a convex side $\Gamma_{\rm c}$ we have
\begin{align*}
  \inf_{v'\in \mathcal{P}_{p,n}(0,L_{\rm c})}\|v^\pm-v'\|_{L^2(0,L_{\rm c})}\leq C\uM k^{1-\delta_*}\,\re^{-p\tau}, \quad k\geq k_0,
\end{align*}
where $\tau>0$ depends only on $\sigma$, the corner angles at the ends of $\Gamma_{\rm c}$, and $c$ (the constant in \eqref{npAssump}), and $C>0$ only on $\Omega$ and $k_0$. If also Assumption \ref{ass:1} holds, then the same estimate holds for the approximation of $v^-(L_{\rm nc}-s)$ on a nonconvex side $\Gamma_{\rm nc}$, except that $L_{\rm c}$ is replaced by $L_{\rm nc}$ in the above formula, and $\tau$ depends now on $\sigma$, $c$, and the exterior angle $\omega$ in Figure~\ref{fig-1}(a).
\end{thm}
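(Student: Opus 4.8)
The plan is to reduce all three assertions to a single model problem and then run the standard $hp$-on-a-geometric-mesh argument. Since the approximation of $v^-(L_{\rm c}-s)$ by $\rho^-(L_{\rm c}-\cdot)$ with $\rho^-\in\mathcal{P}_{p,n}(0,L_{\rm c})$ becomes, after the substitution $s\mapsto L_{\rm c}-s$, identical to that of $v^+$, and since the nonconvex case differs only by $L_{\rm c}\to L_{\rm nc}$ and $\delta^\pm=1-\pi/\omega$, it suffices to treat a function $v^+$ that is analytic in $\real{s}>0$ and satisfies \rf{vjpmBounds} (furnished by Theorem~\ref{thm:1} on a convex side, and by Theorem~\ref{Gamma2Thm}(ii), under Assumption~\ref{ass:1}, on a nonconvex side), to be approximated on the mesh $\mathcal{G}_n(0,A)$ graded towards the singularity at $s=0$. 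The first simplification I would make is to collapse the two-regime bound \rf{vjpmBounds} into the single power law
\[
  |v^\pm(s)|\leq C\uM k^{1-\delta^\pm}|s|^{-\delta^\pm}, \qquad \real{s}>0,
\]
which is legitimate because for $|s|>1/k$ one has $|ks|^{-1/2}\leq|ks|^{-\delta^\pm}$ (as $\delta^\pm<1/2$ and $k|s|>1$). This removes the explicit $k$-dependence from the singular profile, leaving a clean scale-invariant $|s|^{-\delta^\pm}$ singularity.

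Next I would split $[0,A]=[0,x_1]\cup\bigcup_{i=2}^{n}[x_{i-1},x_i]$ and estimate the error separately on the innermost cell and on the remaining cells. On each non-singular cell $[x_{i-1},x_i]$ the essential point is the \emph{self-similarity} of the geometric mesh: since $x_{i-1}=\sigma x_i$, the foci $\{x_{i-1},x_i\}$ are separated by $x_i(1-\sigma)$ while the origin lies a distance $\sigma x_i$ to their left, both scaling linearly in $x_i$. Choosing the ellipse $\mathcal{E}_{x_{i-1},x_i,r}$ with $r=\beta x_i$ for a fixed $\beta\in(1-\sigma,1+\sigma)$ keeps the closed ellipse inside $\real{s}>0$, at distance $\geq c\,x_i$ from the origin, and crucially produces an ellipse parameter $\rho=(\beta+\sqrt{\beta^2-(1-\sigma)^2})/(1-\sigma)=:\rho_0>1$ that is \emph{independent of $i$ and of $k$}. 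Applying Lemma~\ref{EllipseLem} with $\|v^\pm\|_{L^\infty(\mathcal{E})}\leq C\uM k^{1-\delta^\pm}(c\,x_i)^{-\delta^\pm}$, passing from $L^\infty$ to $L^2$ on the cell (a factor $(x_i-x_{i-1})^{1/2}\leq C x_i^{1/2}$), squaring, and summing the geometric series $\sum_{i\geq2}x_i^{1-2\delta^\pm}$ (convergent since $1-2\delta^\pm>0$) yields a total contribution $\leq C\uM k^{1-\delta^\pm}\rho_0^{-p}$ from the non-singular cells.

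For the innermost cell I would simply take the zero polynomial and bound $\normt{v^\pm}{L^2(0,x_1)}$ directly: since $\int_0^{x_1}s^{-2\delta^\pm}\,\rd s=x_1^{1-2\delta^\pm}/(1-2\delta^\pm)$ is finite, this gives $\leq C\uM k^{1-\delta^\pm}x_1^{1/2-\delta^\pm}$ with $x_1=\sigma^{n-1}A$. Here the mesh condition \rf{npAssump} does the decisive work: $n\geq cp$ converts the geometric-in-$n$ factor $\sigma^{(n-1)(1/2-\delta^\pm)}$ into $\re^{-p\tau_1}$ with $\tau_1=c(1/2-\delta^\pm)\log(1/\sigma)>0$, the bounded prefactor being absorbed into $C$. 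Assembling the cell-wise polynomials into one element of $\mathcal{P}_{p,n}(0,A)$ (no interelement continuity is required by Definition~\ref{GeomDefMain}) and combining the two contributions gives the estimate with $\tau=\min\{\log\rho_0,\tau_1\}$. Finally, since $\delta^\pm\geq\delta_*$ one has $k^{1-\delta^\pm}\leq C k^{1-\delta_*}$ for $k\geq k_0$, upgrading the exponent to the stated $k^{1-\delta_*}$; the nonconvex case is identical with $A=L_{\rm nc}$ and $\delta^\pm=1-\pi/\omega$.

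The main obstacle, and the one genuinely new ingredient beyond routine $hp$-theory, is securing a lower bound $\rho_0>1$ on the ellipse parameter that is uniform in both the cell index $i$ and the wavenumber $k$; this is exactly where the self-similarity of the geometric mesh and the scale-invariant form of the simplified singularity bound must be exploited together, and it is also what dictates the precise balance $n\geq cp$ needed to tame the innermost cell. Everything else — the $L^\infty\!\to\!L^2$ conversion, the summation of the geometric series, and the final adjustment of the $k$-exponent — is bookkeeping carried out with $k$-independent constants.
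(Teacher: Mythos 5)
Your proof is correct and follows essentially the same route as the paper, whose own proof simply invokes the argument of \cite[\S 5]{HeLaMe:11} built on Lemma~\ref{EllipseLem}: self-similar ellipses on the outer cells of the geometric mesh (giving a cell- and $k$-independent parameter $\rho_0>1$), a direct $L^2$ bound on the innermost cell, and the condition \eqref{npAssump} to convert the grading $\sigma^{n-1}$ into the factor $\re^{-p\tau}$. Your one deviation --- collapsing the two-regime bound \eqref{vjpmBounds} into the single power law $C\uM k^{1-\delta^\pm}|s|^{-\delta^\pm}$ --- discards some of the $k$-sharpness that the analysis of \cite{HeLaMe:11} retains by exploiting the $|ks|^{-1/2}$ decay away from the corner, but it is legitimate and suffices for the stated $k^{1-\delta_*}$ dependence, since $\delta^\pm\geq\delta_*$.
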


We now combine these results into a single estimate for the best approximation error associated with the approximation of $\varphi\in L^2(\Gamma)$ by an element of $V_{N,k}$. From \rf{Decomp}, \rf{dudnGamma2Rep}, \rf{eqn:varphi}, Theorems \ref{vjPIThm}, \ref{vjtildeThm}, and \ref{vjPMThm}, and the definition of the approximation space $V_{N,k}$, the following result follows:
\begin{thm}
\label{dudnThm}
Suppose that Assumption \ref{ass:1} and \rf{npAssump} hold. Then, where $k_1$, $C_1$, and $c$ are the constants in those assumptions, we have
\begin{align}
  \label{BestAppdudn}
  \inf_{v'\in V_{N,k}}\norm{\varphi-v'}{L^2(\Gamma)}\leq C(M(u)k^{-\delta_*}+\log^{1/2}(2+k))\,\re^{-p\tau},\quad k\geq k_1,
\end{align}
where $C>0$ depends only on $C_1$, $\Omega$ and $k_1$, and $\tau>0$ only on $c$, $\sigma$, and $\Omega$.
\end{thm}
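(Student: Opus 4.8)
The plan is to exploit the additivity of the $L^2(\Gamma)$ norm over the sides of the polygon and to bound the best approximation error side-by-side by the errors incurred in approximating the individual nonoscillatory coefficients appearing in the representations \rf{Decomp} and \rf{dudnGamma2Rep}. Since the squared $L^2(\Gamma)$ norm of any function decomposes as the sum over sides $\Gamma^*$ of the squared $L^2(\Gamma^*)$ norms, and since there are only finitely many sides (a number depending only on $\Omega$), it suffices to exhibit, for each side, a single element of $V_{N,k}$ whose restriction to that side approximates $\varphi$ at the required rate: the infimum over $V_{N,k}$ is then bounded by the value at this particular element, and the contributions from the finitely many sides are summed.

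First I would fix the element $v'\in V_{N,k}$ explicitly by replacing, on each side, every coefficient in \rf{Decomp}/\rf{dudnGamma2Rep} by its best approximant from the relevant finite-dimensional space (which is attained, these spaces being finite-dimensional): namely $v^\pm$ on a convex side and $v^-(L_{\rm nc}-\cdot)$ on a nonconvex side by elements of $\mathcal{P}_{p,n}$, and $v^+(L_{\rm nc}+\cdot)$ and $v$ on a nonconvex side by elements of $\mathcal{P}_p$. By construction this $v'$ lies in $V_{N,k}$. The crucial elementary observation is that on $\Gamma$, i.e.\ for real arc-length $s$, the factors $\re^{\pm\ri ks}$ and $\re^{\ri kr}$ (with $r=r(s)$ real) all have modulus one; hence, recalling \rf{eqn:varphi}, on each side $\varphi-v'$ equals $1/k$ times a sum of terms of the form $(\textrm{coefficient}-\textrm{its approximant})\times(\textrm{unimodular phase})$. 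The triangle inequality then yields, on each side $\Gamma^*$, the bound $\norm{\varphi-v'}{L^2(\Gamma^*)}\le (1/k)\sum(\textrm{coefficient } L^2\textrm{ best-approximation errors})$.

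Second I would insert the per-coefficient estimates. On a convex side Theorem \ref{vjPMThm} bounds each of the two coefficient errors by $C\uM k^{1-\delta_*}\re^{-p\tau}$; on a nonconvex side Theorem \ref{vjPIThm} bounds the $v^+$ error by $C\uM k^{1/2}\re^{-p\tau}$, Theorem \ref{vjtildeThm} bounds the $v$ error by $CC_1k\log^{1/2}(2+k)\re^{-p\tau}$, and the nonconvex case of Theorem \ref{vjPMThm} bounds the $v^-$ error by $C\uM k^{1-\delta_*}\re^{-p\tau}$. Dividing by $k$ converts these to $C\uM k^{-\delta_*}\re^{-p\tau}$, $C\uM k^{-1/2}\re^{-p\tau}$ and $CC_1\log^{1/2}(2+k)\re^{-p\tau}$, respectively. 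To consolidate them I take $\tau$ to be the minimum of the decay exponents furnished by the three theorems; each depends only on $\sigma$, $c$ and the geometry of $\Omega$, so the minimum does too, and replacing every exponent by this common value only weakens the bounds. Since $\delta_*\in(0,1/2)$ we have $k^{-1/2}\le k^{-\delta_*}$ for $k\ge 1$, so the nonconvex $v^+$ contribution is absorbed into the $\uM k^{-\delta_*}$ term, while $C_1$ and the finite, $\Omega$-dependent number of sides are absorbed into $C$. Summing the side contributions then yields \rf{BestAppdudn}.

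This argument is in essence bookkeeping that assembles results already established, so I anticipate no deep obstacle. The only points demanding care are (i) checking that the particular $v'$ constructed above genuinely lies in $V_{N,k}$, so that the infimum is legitimately controlled by the error at $v'$, and (ii) verifying that the disparate $k$-powers and decay rates emerging from the three best-approximation theorems can be subsumed under a single exponent $\tau$ and the two-term envelope $\uM k^{-\delta_*}+\log^{1/2}(2+k)$, which rests on the inequality $k^{-1/2}\le k^{-\delta_*}$ and on the dependence of the minimal $\tau$ on $c$, $\sigma$ and $\Omega$ alone.
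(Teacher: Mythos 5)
Your proposal is correct and is essentially the paper's own proof: the paper simply states that the theorem "follows from \rf{Decomp}, \rf{dudnGamma2Rep}, \rf{eqn:varphi}, Theorems \ref{vjPIThm}, \ref{vjtildeThm}, and \ref{vjPMThm}, and the definition of $V_{N,k}$", and your argument fills in exactly that assembly — side-by-side decomposition, unimodular phases, triangle inequality, the three per-coefficient estimates divided by $k$, a common exponent $\tau$, and absorption of $\uM k^{-1/2}$ into $\uM k^{-\delta_*}$ (for $k_1\leq k<1$ this absorption still works at the cost of a factor $k_1^{\delta_*-1/2}$, which may be put into $C$).
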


\section{Galerkin Method}
\label{sec:gal}
Having designed a HNA approximation space $V_{N,k}$ which can efficiently approximate $\varphi$, we select an element of $V_{N,k}$ by applying the Galerkin method to the integral equation \rf{eqn:op061210}, rewritten with $\varphi$ defined by (\ref{eqn:varphi}) as the unknown. That is, we seek $\varphi_N\in V_{N,k}\subset L^2\left(\Gamma\right)$ such that
\begin{equation}
  \left\langle \cA\varphi_N,v\right\rangle_{L^2\left(\Gamma\right)}=\frac{1}{k}\left\langle f-\cA\Psi,v\right\rangle_{L^2\left(\Gamma\right)},
 \quad\textrm{for all } v\in V_{N,k}.
  \label{eqn:gal}
\end{equation}
If Assumption~\ref{ass:2} holds (cf.\ the discussion at the end of \S\ref{sec:prob}),
then existence and uniqueness of the Galerkin solution $\varphi_N$ is guaranteed by the Lax-Milgram lemma. Moreover, C\'{e}a's lemma (e.g., \cite[Lemma~6.9]{ChGrLaSp:11}) gives the quasi-optimality estimate
\begin{equation}
\label{quasi-opt}
  \norm{\varphi-\varphi_N}{L^2(\Gamma)}\leq\frac{C_0k^{1/2}}{C_2}\inf_{v'\in V_{N,k}}\norm{\varphi-v}{L^2(\Gamma)}, \quad   k\geq k_2,%
\end{equation}
where $C_2$ and $k_2$ are the constants from Assumption~\ref{ass:2}, and $C_0$ is the constant from Lemma~\ref{lem:1} in the case that $k_0=k_2$. Combined with Theorem~\ref{dudnThm}, this gives:
\begin{thm}
\label{GalerkinThm}
Suppose that Assumption~\ref{ass:2} and \rf{npAssump} hold. Then, where $k_2$, $C_2$, and $c$ are the constants in those assumptions, we have
\begin{align}
  \label{GalerkinErrorEst}
  \norm{\varphi-\varphi_N}{L^2(\Gamma)}\leq Ck^{1/2}(M(u)k^{-\delta_*}+\log^{1/2}(2+k))\,\re^{-p\tau}, \quad   k\geq k_2,
\end{align}
where $C>0$ depends only on $C_2$, $\Omega$ and $k_2$, and $\tau>0$ only on $c$, $\sigma$, and $\Omega$.
\end{thm}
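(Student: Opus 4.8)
The plan is to assemble the estimate from two ingredients that are already established: the quasi-optimality bound \rf{quasi-opt}, which comes from coercivity together with continuity, and the best approximation bound of Theorem~\ref{dudnThm}. The one logical gap to bridge is that Theorem~\ref{dudnThm} is stated under Assumption~\ref{ass:1}, whereas the present theorem hypothesises only Assumption~\ref{ass:2}; so the first task is to verify that Assumption~\ref{ass:2} implies Assumption~\ref{ass:1}, after which the two bounds can simply be chained together.

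First I would recall the argument given at the end of \S\ref{sec:prob}. Under Assumption~\ref{ass:2}, Lemma~\ref{lem:1} and the Lax--Milgram lemma yield invertibility of $\cA$ together with the uniform bound \rf{eqn:InverseBound}, namely $\normt{\cA^{-1}}{L^2(\Gamma)}\leq 1/C_2$ for $k\geq k_2$. Since for either formulation the data satisfies $\normt{f}{L^2(\Gamma)}\leq Ck$ with $C$ independent of $k$, and since $\pdonetext{u}{\bn}=\cA^{-1}f$, it follows that $\normt{\pdonetext{u}{\bn}}{L^2(\Gamma)}\leq Ck/C_2$ for $k\geq k_2$. Hence Assumption~\ref{ass:1} holds with $k_1=k_2$ and $C_1=C/C_2$, so in particular $C_1$ depends only on $C_2$ and $\Omega$. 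This makes Theorem~\ref{dudnThm} available at the same $k$-threshold $k_2$.

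Next I would invoke the Galerkin framework. Assumption~\ref{ass:2} is precisely the coercivity needed to apply the Lax--Milgram lemma to \rf{eqn:gal}, guaranteeing existence and uniqueness of $\varphi_N$; combined with the continuity bound $\normt{\cA}{L^2(\Gamma)}\leq C_0 k^{1/2}$ from Lemma~\ref{lem:1} (taken with $k_0=k_2$), C\'ea's lemma then gives the quasi-optimality estimate \rf{quasi-opt}. Finally, substituting the best approximation bound \rf{BestAppdudn} of Theorem~\ref{dudnThm} (now valid for $k\geq k_1=k_2$) into the right-hand side of \rf{quasi-opt} produces
\[
\norm{\varphi-\varphi_N}{L^2(\Gamma)}\leq \frac{C_0 k^{1/2}}{C_2}\,C\big(M(u)k^{-\delta_*}+\log^{1/2}(2+k)\big)\,\re^{-p\tau},\qquad k\geq k_2,
\]
which is \rf{GalerkinErrorEst}. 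Tracking the constants, the prefactor depends only on $C_0$, $C_2$, and the constant supplied by Theorem~\ref{dudnThm}, hence ultimately only on $C_2$, $\Omega$ and $k_2$, while $\tau$ inherits its dependence on $c$, $\sigma$, and $\Omega$ directly from Theorem~\ref{dudnThm}.

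The main obstacle here is bookkeeping rather than genuine mathematical difficulty: all of the hard analysis is already encapsulated in the regularity result Theorem~\ref{Gamma2Thm} and the approximation result Theorem~\ref{dudnThm}. The point requiring care is confirming that the hypotheses align --- that Assumption~\ref{ass:2} is strong enough to supply both the quasi-optimality of the Galerkin scheme and, through \rf{eqn:InverseBound}, the solution bound Assumption~\ref{ass:1} on which the best approximation estimate rests --- and that the various $k$-thresholds collapse consistently to $k\geq k_2$.
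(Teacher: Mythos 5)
Your proposal is correct and follows exactly the paper's own route: the paper obtains Theorem~\ref{GalerkinThm} by noting that Assumption~\ref{ass:2} yields Assumption~\ref{ass:1} with $k_1=k_2$ and $C_1=C/C_2$ (via Lemma~\ref{lem:1}, Lax--Milgram and \rf{eqn:InverseBound}, as discussed at the end of \S\ref{sec:prob}), then chaining C\'ea's quasi-optimality estimate \rf{quasi-opt} with the best approximation bound of Theorem~\ref{dudnThm}. Your constant-tracking and the collapsing of the $k$-thresholds to $k\geq k_2$ also match the paper's bookkeeping.
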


An approximation $u_N$ to the solution $u$ of the BVP can be found by inserting the approximation $\partial u/\partial\bn \approx \Psi+ k \varphi_N $ into the formula~\rf{RepThm}, i.e.
\begin{align*}
  u_N(\bx) := u^i(\bx) - \int_{\Gamma}\Phi_k(\bx,\by)\left(\Psi(\by) +  k\varphi_N(\by)\right)\,\rd s(\by), \qquad \bx\in D.
\end{align*}
Arguing as in the proof of \cite[Theorem 6.3]{HeLaMe:11}, noting that $\uM = \norm{u}{L^\infty(D)}\geq 1$ (since $|u(\bx)| \sim |u^i(\bx)| = 1$ as $|\bx|\to \infty$), we deduce:

\begin{thm}
\label{DomainThm} Under the assumptions of Theorem~\ref{GalerkinThm} we have
\begin{align}
  \label{DomainErrorEst}
  \frac{\norm{u-u_N}{L^\infty(D)}}{\norm{u}{L^\infty(D)}} \leq Ck\log(2+k)\,\re^{-p\tau}, \quad   k\geq k_2,
\end{align}
where $C>0$ depends only on $C_2$, $\Omega$ and $k_2$, and $\tau>0$ only on $c$, $\sigma$, and $\Omega$.
\end{thm}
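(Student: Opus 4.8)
The plan is to exploit the fact that both $u$ and $u_N$ are single-layer potentials of boundary data, so that their difference in $D$ is controlled entirely by the Galerkin error $\norm{\varphi-\varphi_N}{L^2(\Gamma)}$ bounded in Theorem~\ref{GalerkinThm}. First I would subtract the representation formula~\rf{RepThm} for $u$ from the definition of $u_N$; recalling from~\rf{eqn:varphi} that $\pdonetext{u}{\bn}=\Psi+k\varphi$, the incident field $u^i$ and the leading-order term $\Psi$ cancel, leaving, for $\bx\in D$,
\begin{align}
\label{uminusuNProp}
u(\bx)-u_N(\bx) = -k\int_\Gamma \Phi_k(\bx,\by)\big(\varphi(\by)-\varphi_N(\by)\big)\,\rd s(\by).
\end{align}
Applying the Cauchy--Schwarz inequality to~\rf{uminusuNProp} then gives
\begin{align}
\label{CSProp}
\norm{u-u_N}{L^\infty(D)} \leq k\Big(\sup_{\bx\in D}\norm{\Phi_k(\bx,\cdot)}{L^2(\Gamma)}\Big)\norm{\varphi-\varphi_N}{L^2(\Gamma)}.
\end{align}

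The hard part will be to establish the $k$-explicit bound
\begin{align}
\label{PhiL2Prop}
\sup_{\bx\in D}\norm{\Phi_k(\bx,\cdot)}{L^2(\Gamma)} \leq Ck^{-1/2}\log^{1/2}(2+k),
\end{align}
since the supremum must be controlled uniformly even as $\bx$ approaches $\Gamma$ and the kernel becomes singular. To prove~\rf{PhiL2Prop} I would use the logarithmic behaviour of $H_0^{(1)}(z)$ as $z\to0$ together with the decay $|H^*_0(z)|\leq C|z|^{-1/2}$ recorded in~\rf{H0bound}. Writing $d:=\mathrm{dist}(\bx,\Gamma)$ and parametrising $\Gamma$ near the closest point by arc length $\tau$, so that $|\bx-\by|^2\approx d^2+\tau^2$, I would split $\int_\Gamma|\Phi_k(\bx,\by)|^2\,\rd s(\by)$ according to whether $k|\bx-\by|<1$ or $k|\bx-\by|\geq1$. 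The near region $k|\bx-\by|<1$ has arc length $\ord{1/k}$, and rescaling arc length by $k$ shows that its square-integrable logarithmic singularity contributes only $\ord{1/k}$, uniformly in $d$. On the far region $k|\bx-\by|\geq1$ one uses $|H^{(1)}_0|^2\leq C(k|\bx-\by|)^{-1}$ to bound the contribution by $(C/k)\int(d^2+\tau^2)^{-1/2}\,\rd\tau$; here the logarithm appears precisely in the worst case $d\leq1/k$, where this integral behaves like $\int_{1/k}^{L}\tau^{-1}\,\rd\tau=\ord{\log(2+k)}$. Summing the two contributions and taking square roots yields~\rf{PhiL2Prop}.

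Finally I would combine~\rf{CSProp}, \rf{PhiL2Prop} and the Galerkin estimate~\rf{GalerkinErrorEst} to obtain
\begin{align}
\label{combineProp}
\norm{u-u_N}{L^\infty(D)} \leq Ck\log^{1/2}(2+k)\big(\uM k^{-\delta_*}+\log^{1/2}(2+k)\big)\re^{-p\tau}, \qquad k\geq k_2,
\end{align}
since $k\cdot k^{-1/2}\cdot k^{1/2}=k$, with the outer factor $\log^{1/2}(2+k)$ coming from~\rf{PhiL2Prop}. Dividing~\rf{combineProp} through by $\norm{u}{L^\infty(D)}=\uM$ and using $\uM\geq1$ (as $|u(\bx)|\to|u^i(\bx)|=1$ when $|\bx|\to\infty$) both to absorb the factor $1/\uM$ and to replace $k^{-\delta_*}+\log^{1/2}(2+k)$ by $C\log^{1/2}(2+k)$ for $k\geq k_2$, I arrive at $Ck\log(2+k)\re^{-p\tau}$, which is the required estimate~\rf{DomainErrorEst}. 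I expect~\rf{PhiL2Prop} to be the only substantive point: the potential-theoretic cancellation in~\rf{uminusuNProp} and the final bookkeeping are routine, whereas extracting the logarithmic factor from the boundary layer of width $\ord{1/k}$ around $\Gamma$ requires the case analysis to be carried out carefully.
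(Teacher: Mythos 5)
Your proposal is correct and is essentially the paper's own argument: the paper proves Theorem~\ref{DomainThm} by ``arguing as in the proof of [HeLaMe:11, Theorem 6.3]'' together with the observation $\uM\geq 1$, and that cited proof is precisely your route — subtract \rf{RepThm} from the definition of $u_N$ so only $k(\varphi-\varphi_N)$ remains, apply Cauchy--Schwarz, bound $\sup_{\bx\in D}\normt{\Phi_k(\bx,\cdot)}{L^2(\Gamma)}$ by $Ck^{-1/2}\log^{1/2}(2+k)$ via the near/far splitting of the Hankel kernel, and combine with \rf{GalerkinErrorEst}. Your $k$-bookkeeping and the use of $\uM\geq1$ to absorb the division by $\norm{u}{L^\infty(D)}$ match the paper's reasoning exactly.
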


An object of interest in applications is the \emph{far field pattern} of the scattered field.
An asymptotic expansion of the representation~\rf{RepThm} reveals that (cf.~\cite{CoKr:92})%
\begin{equation}
  \label{FarField}
  u^s(\bx) \sim \dfrac{\re^{\ri\pi/4}}{2\sqrt{2\pi}}\dfrac{\re^{\ri kr}}{\sqrt{kr}}F(\hat{\bx}), \quad \mbox{as }r:=|\bx|\rightarrow\infty,
\end{equation}
where $\hat{\bx}:=\bx/|\bx|\in \mathbb{S}^1$, the unit circle, and
\begin{align}
  \label{FDef}
  F(\hat{\bx}):=-\int_\Gamma \re^{-\ri k\hat{\bx}\cdot \by}\pdone{u}{\bn}(\by)\,\rd s(\by), \qquad \hat{\bx}\in \mathbb{S}^1.
\end{align}
An approximation $F_N$ to the far field pattern $F$ can be found by inserting the approximation $\partial u/\partial\bn \approx \Psi + k\varphi_N$ into the formula~\rf{FDef}, i.e.
\begin{equation}
  F_N(\hat{\bx}):=-\int_\Gamma \re^{-\ri k\hat{\bx}\cdot \by} \left(\Psi(\by)+ k\varphi_N(\by)  \right) \,\rd s(\by), \qquad \hat{\bx}\in \mathbb{S}^1.
  \label{eqn:FFP_approx}
\end{equation}
The proof of the following estimate follows precisely that of \cite[Theorem 6.4]{HeLaMe:11}.

\begin{thm}
\label{FarFieldThm} Under the assumptions of Theorem~\ref{GalerkinThm} we have
\begin{align}
  \label{FarFieldErrorEst}
  \norm{F-F_N}{L^\infty(\mathbb{S}^1)}\leq Ck^{3/2}(M(u)k^{-\delta_*}+\log^{1/2}(2+k))\,\re^{-p\tau}, \quad   k\geq k_2,
\end{align}
where $C>0$ depends only on $C_2$, $\Omega$ and $k_2$, and $\tau>0$ only on $c$, $\sigma$, and $\Omega$.
\end{thm}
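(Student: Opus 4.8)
The plan is to reduce the far field error directly to the $L^2(\Gamma)$ Galerkin error already controlled by Theorem~\ref{GalerkinThm}, exactly as in the proof of the domain estimate, Theorem~\ref{DomainThm}. The starting point is the identity $\pdonetext{u}{\bn} = \Psi + k\varphi$ on $\Gamma$, which is simply a restatement of the definition \rf{eqn:varphi} of $\varphi$. Subtracting the approximation \rf{eqn:FFP_approx} from the exact far field pattern \rf{FDef} and inserting this identity, the leading-order geometrical-optics term $\Psi$ cancels, leaving
\[
F(\hat{\bx}) - F_N(\hat{\bx}) = -k\int_\Gamma \re^{-\ri k\hat{\bx}\cdot\by}\big(\varphi(\by)-\varphi_N(\by)\big)\,\rd s(\by), \qquad \hat{\bx}\in \mathbb{S}^1.
\]

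Next I would bound this integral uniformly in $\hat{\bx}$ by the Cauchy--Schwarz inequality. The key observation—and the only point requiring any thought—is that the far field kernel $\re^{-\ri k\hat{\bx}\cdot\by}$ is unimodular, so that $\normt{\re^{-\ri k\hat{\bx}\cdot\,\cdot}}{L^2(\Gamma)} = |\Gamma|^{1/2}$, a constant depending only on $\Omega$ and in particular independent of both $k$ and $\hat{\bx}$. Hence
\[
\big|F(\hat{\bx}) - F_N(\hat{\bx})\big| \leq k\,|\Gamma|^{1/2}\,\norm{\varphi-\varphi_N}{L^2(\Gamma)}, \qquad \hat{\bx}\in \mathbb{S}^1,
\]
and taking the supremum over $\hat{\bx}\in\mathbb{S}^1$ gives $\norm{F-F_N}{L^\infty(\mathbb{S}^1)} \leq Ck\,\norm{\varphi-\varphi_N}{L^2(\Gamma)}$ with $C>0$ depending only on $\Omega$.

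Finally I would invoke Theorem~\ref{GalerkinThm}, whose hypotheses (Assumption~\ref{ass:2} and \rf{npAssump}) are precisely those assumed here, to substitute the bound \rf{GalerkinErrorEst} for $\norm{\varphi-\varphi_N}{L^2(\Gamma)}$. The resulting product of the explicit factor $k$ with the $k^{1/2}$ in \rf{GalerkinErrorEst} produces the claimed $k^{3/2}$ prefactor, yielding \rf{FarFieldErrorEst} with the stated dependence of $C$ and $\tau$. I do not expect any genuine obstacle in this argument: all the analytic difficulty has already been absorbed into Theorem~\ref{GalerkinThm} (and ultimately into the regularity result Theorem~\ref{Gamma2Thm} and the best-approximation estimates of \S\ref{sec:ApproxSpace}). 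The sole subtlety is the unimodularity of the plane-wave kernel, which is what keeps the estimate free of any additional $k$- or geometry-dependent growth beyond the trivial factor $|\Gamma|^{1/2}$; the remainder is a routine application of Cauchy--Schwarz followed by substitution.
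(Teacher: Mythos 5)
Your argument is correct and is essentially the paper's own proof: the paper simply defers to \cite[Theorem 6.4]{HeLaMe:11}, which proceeds exactly as you do, cancelling $\Psi$ via $\pdonetext{u}{\bn}=\Psi+k\varphi$, applying Cauchy--Schwarz with the unimodular plane-wave kernel (contributing only the factor $|\Gamma|^{1/2}$), and then inserting the Galerkin error bound \rf{GalerkinErrorEst} to obtain the $k\cdot k^{1/2}=k^{3/2}$ prefactor.
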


The above results hold for all polygons $\Omega$ in the class $\cC$ of Definition~\ref{classCDef}, provided that Assumption~\ref{ass:2} holds.
But, as remarked in \S\ref{sec:prob}, if $\Omega$ is star-like and $\cA=\cA_k$, then Assumption \ref{ass:2} holds for every $k_2>0$.
Furthermore, in this case it has been shown in \cite[Theorem 4.3]{HeLaMe:11} (and see Remark~\ref{rem:M}) that
\begin{align}
\label{MBoundProp2}
  \uM \leq C k^{1/2}\log^{1/2}{(2+k)}, \quad    k\geq k_2,
\end{align}
where $C$ depends only on $k_2$ and $\Omega$. %
Thus the above results have the following corollary which requires no coercivity assumption.

\begin{cor}
\label{StarlikeCor}
Suppose that $\Omega$ is a star-like member of the class $\cC$. Suppose also that we choose $\cA=\cA_k$, the star-combined potential operator defined in \rf{eqn:star_comb}, and that we choose $n$ so that \rf{npAssump} holds. Then, for any $k_2>0$, for $k\geq k_2$ we have  %
\begin{align}
\label{GalerkinErrorEstStarlike}
\norm{\varphi-\varphi_N}{L^2(\Gamma)}&\leq Ck^{1-\delta_*}\log^{1/2}(2+k)\,\re^{-p\tau},\\
\label{DomainErrorEstStarlike}
\frac{\norm{u-u_N}{L^\infty(D)}}{\norm{u}{L^\infty(D)}} &\leq Ck\log(2+k)\,\re^{-p\tau},\\
\label{FarFieldErrorEstStarlike}
\norm{F-F_N}{L^\infty(\mathbb{S}^1)}&\leq Ck^{2-\delta_*}\log^{1/2}(2+k)\,\re^{-p\tau},%
\end{align}
where $C>0$ depends only on $\Omega$ and $k_2$, and $\tau>0$ depends only on $c$, $\sigma$, and $\Omega$.
\end{cor}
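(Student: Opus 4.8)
The plan is to obtain the corollary purely by assembling results already established in the paper, the point being that in the star-like case with the star-combined operator all the hypotheses of Theorems \ref{GalerkinThm}, \ref{DomainThm} and \ref{FarFieldThm} are automatically satisfied. First I would invoke the fact, recorded in \S\ref{sec:prob} on the basis of \cite{SpChGrSm:11}, that when $\Omega$ is star-like and $\cA=\cA_k$ the coercivity Assumption \ref{ass:2} holds for every $k_2>0$ (with the explicit constant $C_2=\tfrac12\,\essinf_{\bx\in\Gamma}(\bx\cdot\bn(\bx))$). This is exactly the hypothesis required by the three theorems above, so each of \rf{GalerkinErrorEst}, \rf{DomainErrorEst} and \rf{FarFieldErrorEst} is valid here for arbitrary $k_2>0$; note in particular that Assumption \ref{ass:1} is then implied, as discussed after \rf{eqn:InverseBound}.

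The second ingredient is the a priori bound \rf{MBoundProp2} on $\uM$, valid precisely for star-like polygons, namely $\uM\leq Ck^{1/2}\log^{1/2}(2+k)$ for $k\geq k_2$. Substituting this into \rf{GalerkinErrorEst}, the bracketed factor satisfies
\begin{align*}
\uM k^{-\delta_*}+\log^{1/2}(2+k)\leq Ck^{1/2-\delta_*}\log^{1/2}(2+k), \qquad k\geq k_2,
\end{align*}
the first summand being bounded directly and the second using the identity $\log^{1/2}(2+k)=k^{-(1/2-\delta_*)}\big(k^{1/2-\delta_*}\log^{1/2}(2+k)\big)$ together with $k^{-(1/2-\delta_*)}\leq k_2^{-(1/2-\delta_*)}$, valid since $0<\delta_*<1/2$. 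Multiplying by the prefactor $k^{1/2}$ in \rf{GalerkinErrorEst} then yields \rf{GalerkinErrorEstStarlike}. The far-field estimate \rf{FarFieldErrorEstStarlike} follows in exactly the same way from \rf{FarFieldErrorEst}, the only difference being the prefactor $k^{3/2}$ in place of $k^{1/2}$, which produces the exponent $2-\delta_*$. The relative domain error \rf{DomainErrorEstStarlike} is even more immediate: Theorem \ref{DomainThm} already delivers the bound $Ck\log(2+k)\re^{-p\tau}$ with no residual dependence on $\uM$ (the factor $\uM=\norm{u}{L^\infty(D)}$ having been divided out in its proof), so \rf{DomainErrorEstStarlike} is nothing more than a restatement of that estimate under the now-verified coercivity.

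There is no substantial obstacle here, since the hard analysis has been front-loaded: the frequency-explicit best-approximation estimate (Theorem \ref{dudnThm}, resting on the regularity Theorem \ref{Gamma2Thm}), the continuity bound (Lemma \ref{lem:1}), the coercivity of $\cA_k$ (from \cite{SpChGrSm:11}), and the growth bound \rf{MBoundProp2} on $\uM$ (from \cite{HeLaMe:11}) have all been established or quoted. The only mild care needed is the elementary observation, used above, that because $0<\delta_*<1/2$ the substitution of the $\uM$ bound does not degrade the power of $k$ beyond those stated. I would conclude by collecting the $k$-independent constants, noting that each $C$ depends only on $\Omega$ and $k_2$, while $\tau$ is inherited unchanged from Theorem \ref{dudnThm} and so depends only on $c$, $\sigma$ and $\Omega$.
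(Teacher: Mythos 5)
Your proposal is correct and follows exactly the route the paper intends (the paper states this corollary without a written proof, as an immediate consequence of the preceding results): coercivity of the star-combined operator for star-like Lipschitz domains gives Assumption \ref{ass:2} for every $k_2>0$ with $C_2$ depending only on $\Omega$, so Theorems \ref{GalerkinThm}--\ref{FarFieldThm} apply, and substituting the bound \rf{MBoundProp2} on $\uM$ (using $0<\delta_*<1/2$ to absorb the lone $\log^{1/2}(2+k)$ term into $k^{1/2-\delta_*}\log^{1/2}(2+k)$) yields \rf{GalerkinErrorEstStarlike} and \rf{FarFieldErrorEstStarlike}, while \rf{DomainErrorEstStarlike} is a direct restatement of Theorem \ref{DomainThm}. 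Your bookkeeping of the constants and of where Assumption \ref{ass:1} comes from is also accurate.
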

\begin{rem}
\label{rem:sncw}
As remarked at the end of \S\ref{sec:prob}, it is reasonable, based on the numerical evidence in \cite{BeSp:11}, to conjecture that Assumption \ref{ass:2} holds for every $k_2>0$ also for $\cA=\cA_{k,k}$, for all members of the class $\cC$ (not necessarily star-like). Thus we conjecture that \rf{GalerkinErrorEstStarlike}--\rf{FarFieldErrorEstStarlike} hold also for $\cA=\cA_{k,k}$, for all members of the class $\cC$.
\end{rem}

\begin{rem}
  \label{rem:log2k}
The algebraically $k$-dependent prefactors in the error estimates of this section can be absorbed into the exponentially decaying factors by allowing $p$ to grow modestly with increasing~$k$. We illustrate this in the case of \rf{DomainErrorEstStarlike}. If %
\[ %
  p\geq \frac{\log{(k\log(2+k))}}{c_0},
\] %
for some $0<c_0<\tau$, then~\rf{DomainErrorEstStarlike} can be replaced by
\begin{align}
  \label{GalerkinExpDecay}
  \frac{\norm{u-u_N}{L^\infty(D)}}{\norm{u}{L^\infty(D)}}\leq C\re^{-p\kappa}, \quad   k\geq k_2,
\end{align}
where $\kappa=\tau-c_0$, and both $C$ and $\kappa$ are independent of $k$.
 Since the number of degrees of freedom, $N$, is given by \rf{NDof}, and it is sufficient to increase $n$ in proportion to $p$ for \rf{GalerkinExpDecay} to hold, it follows from \rf{GalerkinExpDecay} that, to maintain a fixed accuracy, we need only increase $N$ in proportion to $(\log(k\log k))^2$ as $k\to\infty$.
\end{rem}

\section{Numerical Results}
\label{sec:num}
We present numerical computations of the Galerkin approximation $\varphi_N$ defined by (\ref{eqn:gal}), using the standard combined-potential formulation, $\cA=\cA_{k,k}$, given by \eqref{eqn:BIE_standard}, for a particular star-like scatterer in the class $\cC$. In contrast to the choice $\cA=\cA_k$, the star-combined operator given by \eqref{eqn:star_comb}, for which Corollory \ref{StarlikeCor} holds, we do not have a complete theory for $\cA=\cA_{k,k}$ in the sense that, while Theorems \ref{GalerkinThm}--\ref{FarFieldThm} apply,  Assumption~\ref{ass:2} has not been shown to hold for obstacles in the class $\cC$ for $\cA=\cA_{k,k}$. One point of the computations in this section is to provide evidence for the conjecture in Remark \ref{rem:sncw} that \eqref{GalerkinErrorEstStarlike}--\eqref{FarFieldErrorEstStarlike} hold also for $\cA=\cA_{k,k}$.

The scatterer we consider is shown in Figure~\ref{fig:scatterer1}. Its nonconvex sides have length $2\pi$ and its convex sides length $4\pi$, so the total length of the boundary is $12\pi$, which is $6k$ wavelengths since the wavelength $\lambda=2\pi/k$.  We consider two different incident directions $\alpha$, measured anticlockwise from the downwards vertical (as in Figure~\ref{fig-1}):
\begin{enumerate}
  \item $\alpha=5\pi/4$, as shown in Figure~\ref{fig:total1}(a); in this case, multiply-reflected rays are present in the asymptotic solution.
  \item $\alpha=5\pi/3$, as shown in Figure~\ref{fig:total1}(b); in this case, one of the nonconvex sides is partially illuminated.%
\end{enumerate}
The scatterers, the incident directions, the corresponding total fields for $k=10$, and a circle of radius $3\pi$ on which we compute the total field for the purpose of calculating errors (see Figure~\ref{fig:total4} below) are plotted in Figure~\ref{fig:total1}.
In all of our experiments we take $n=2(p+1)$. From~(\ref{NDof}), %
the total number of degrees of freedom is then $N=12p^2+28p + 16$.
Quadrature routines for the evaluation of oscillatory integrals similar to those that appear in our ($N$-dimensional) linear system (arising from \rf{eqn:gal}) are described in \cite[\S4]{ChGrLaSp:11} - for more details see \cite[\S4]{TwiggerThesis:12}.
We evaluate the Fresnel integral ${\rm Fr}$ appearing in the leading order behaviour $\Psi$ on nonconvex sides (cf.\ Lemma \ref{def:udDef} and Theorem \ref{Gamma2Thm}),
and hence in the integrals on the right hand side of \rf{eqn:gal}, efficiently and accurately using the method of \cite{AlChLP:13}.

We will demonstrate exponential decay of $\norm{\varphi-\varphi_N}{L^2(\Gamma)}$ as $p$ increases, for fixed $k$, as predicted by~(\ref{GalerkinErrorEst}).
More significantly, we will also see that, as $k$ increases with $p$ fixed, $\norm{\varphi-\varphi_N}{L^2(\Gamma)}$ actually decreases, suggesting that we can maintain accuracy as $k\to\infty$ with a fixed number of degrees of freedom, and that the bound~(\ref{GalerkinErrorEstStarlike}) is not sharp.  Similarly, we will see that the relative error, $\norm{\varphi-\varphi_N}{L^2(\Gamma)}/\norm{\varphi}{L^2(\Gamma)}$, grows only very slowly as $k$ increases with $N$ fixed.  We will also compute the solution in the domain and the far field pattern, making comparison with the error estimates (\ref{DomainErrorEstStarlike}) and~(\ref{FarFieldErrorEstStarlike}).

Since $N$ depends only on $p$, and the values of $p$ are more intuitively meaningful, we introduce the additional notation $\psi_p(s):=\varphi_{N}(s)$.  We begin in Figure~\ref{fig:bdy_soln1} by plotting $|\psi_7(s)|$ (sampled at 100,000 evenly spaced points on the boundary) for $\alpha=5\pi/3$ and
$k=10$ and $160$.
\begin{figure}[t]
\begin{center}
\subfigure[$\alpha=5\pi/3$, $k=10$]{\includegraphics[width=5.9cm]{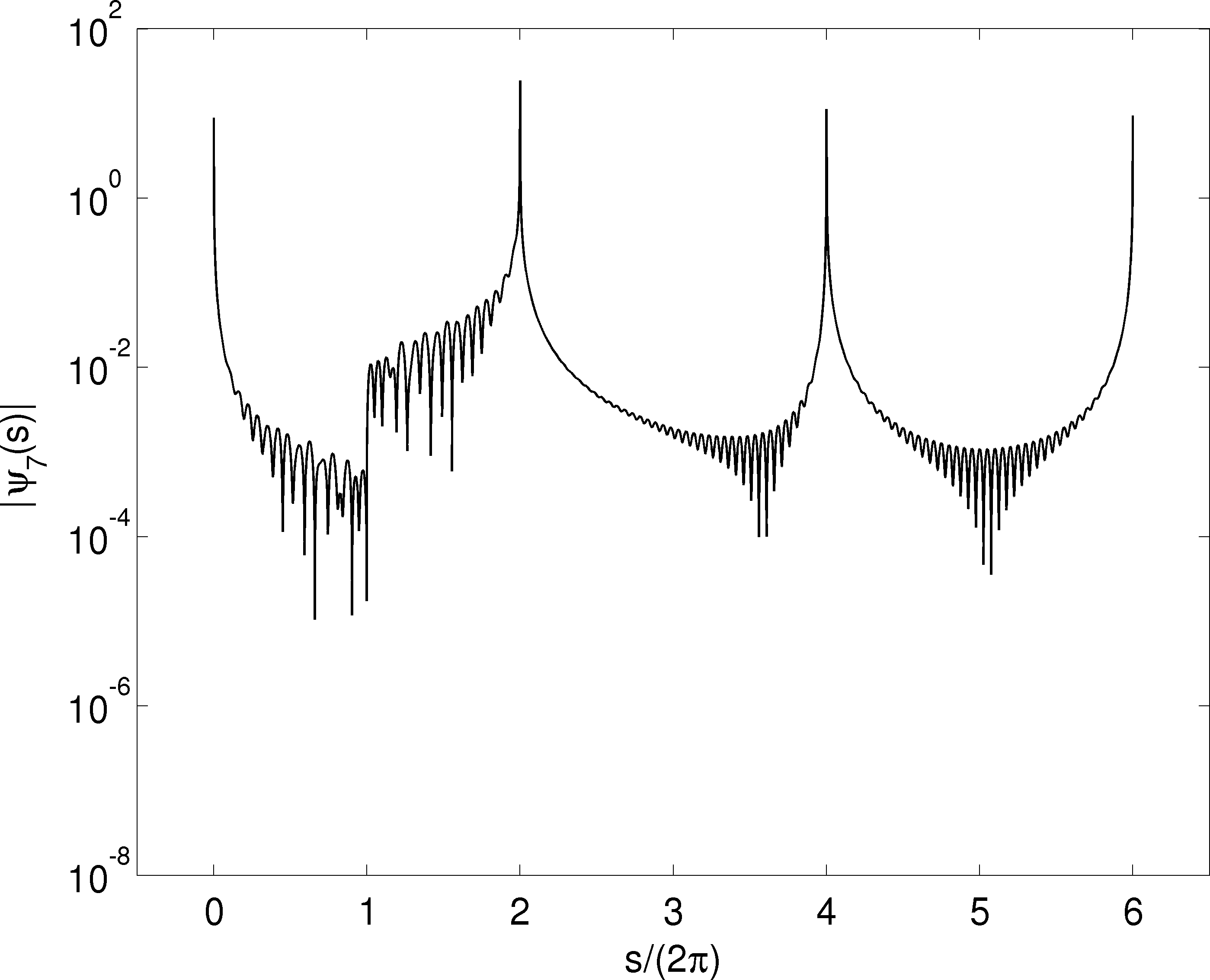}}
\hs{2}
\subfigure[$\alpha=5\pi/3$, $k=160$]{\includegraphics[width=5.9cm]{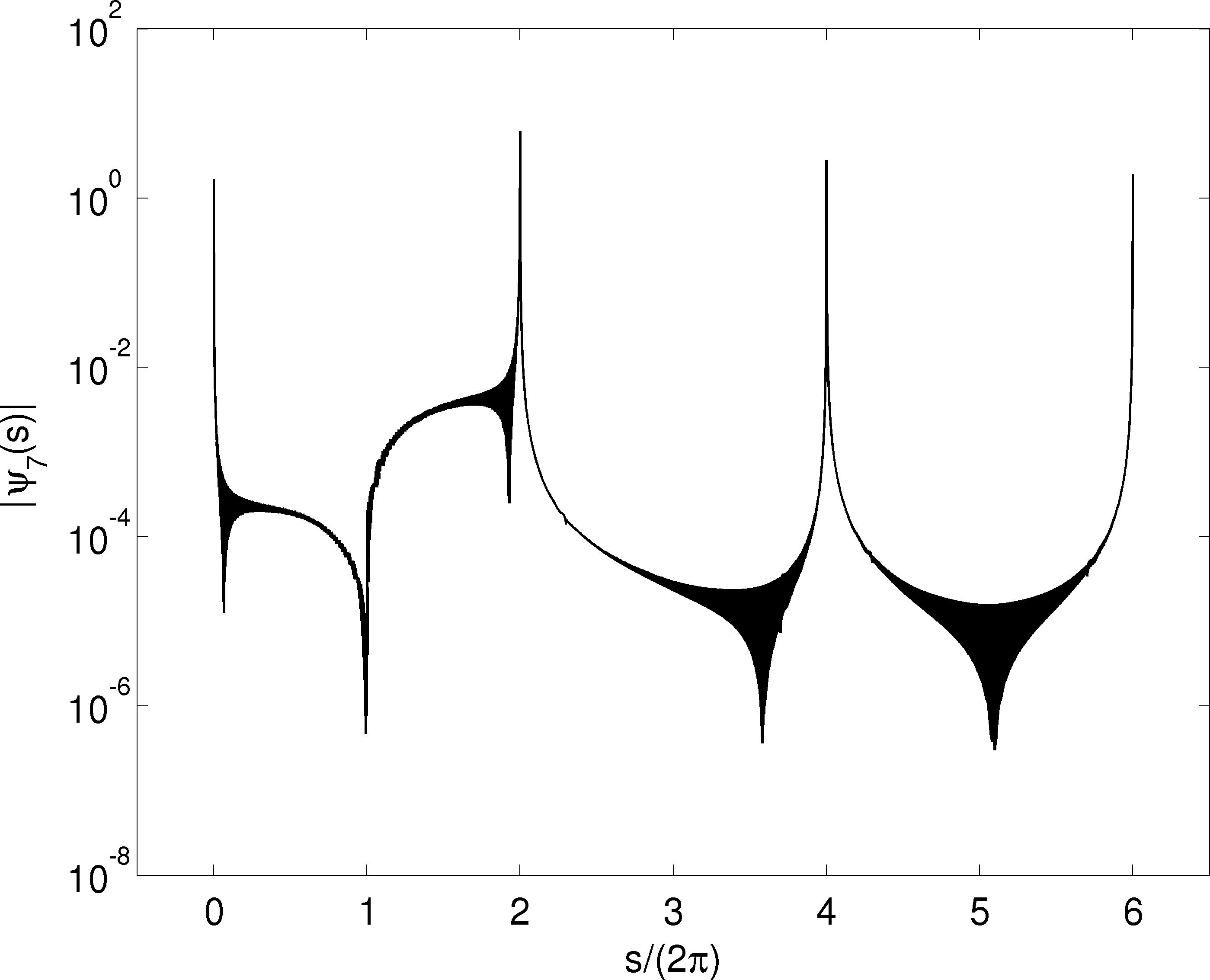}}
\end{center}
\caption{Boundary solution for $\alpha=5\pi/3$, with $k=10$ and $k=160$.}
\label{fig:bdy_soln1}
\end{figure}
The corner between the two nonconvex sides is at $s/(2\pi)=1$; the corners between convex and nonconvex sides are at  $s/(2\pi)=2$ and $s/(2\pi)=0$ (equivalently, by periodicity, $s/(2\pi)=6$), and the corner between the two convex sides is at $s/(2\pi)=4$.  There is a singularity in the solution $\varphi$ at all corners except the one between the nonconvex sides, where $\varphi=0$.  These singularities are evident in Figure~\ref{fig:bdy_soln1} as is the increased oscillation for larger~$k$.  (The apparent shaded region is an artefact of very high oscillation.) %

In Figure~\ref{fig:rel_errors} we plot the relative $L^2$ and $L^1$ errors against $p$, for the two angles of incidence, for three values of $k$.
\begin{figure}[tb]
\begin{center}
\subfigure[$\alpha=5\pi/4$ - relative $L^2$ errors]{\includegraphics[width=5.5cm]{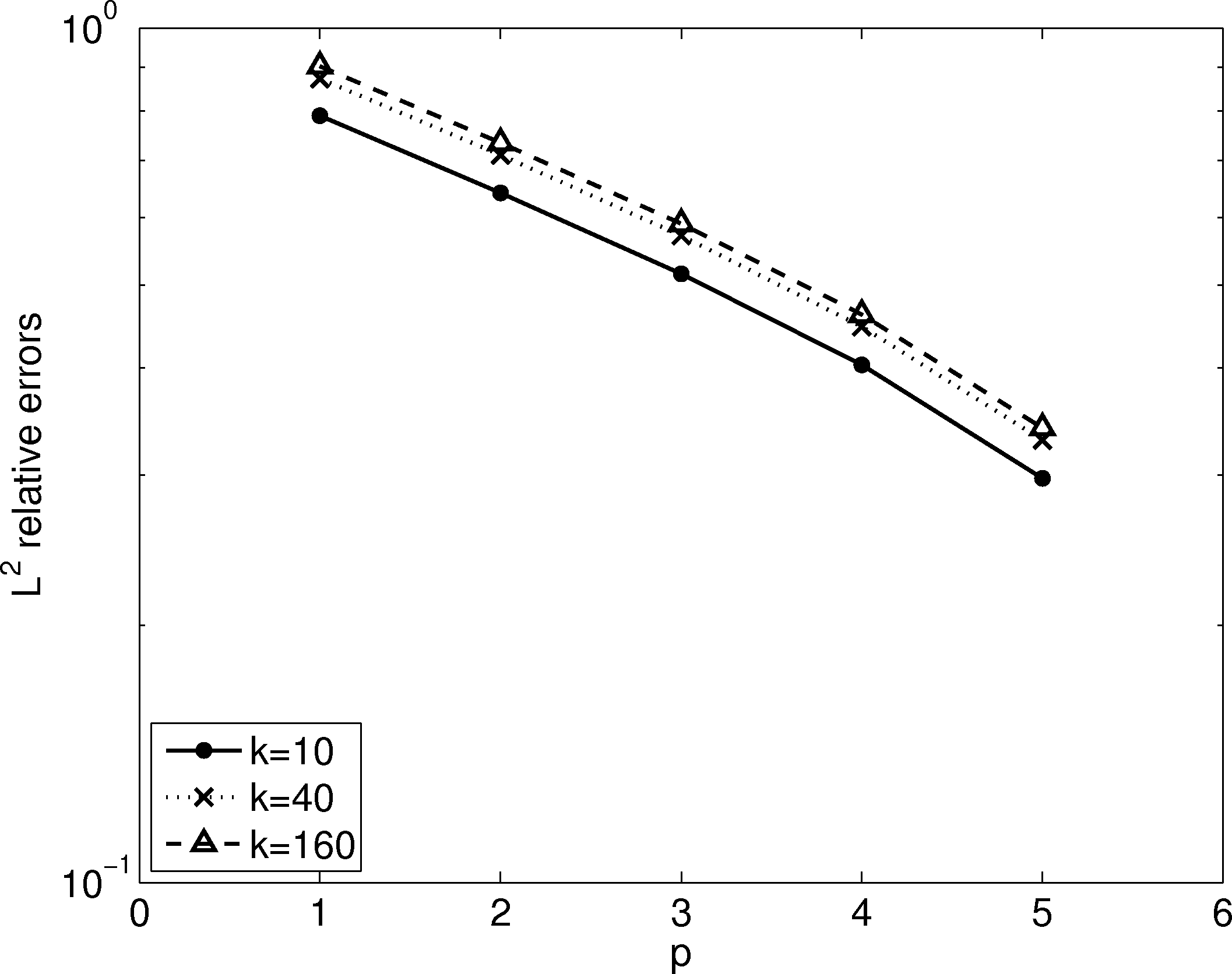}}
\hs{1}
\subfigure[$\alpha=5\pi/4$ - relative $L^1$ errors]{\includegraphics[width=5.5cm]{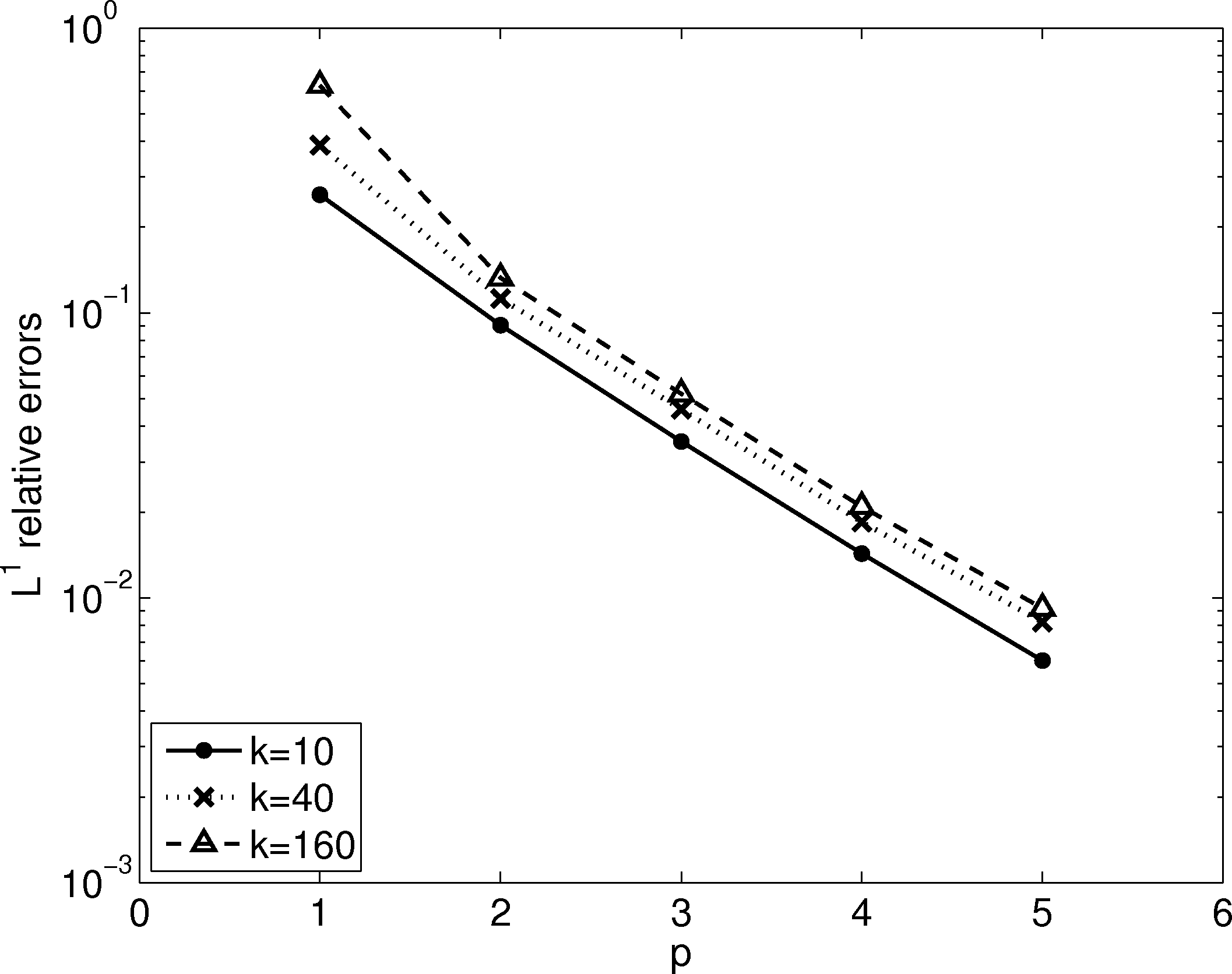}}
\vs{1}
\subfigure[$\alpha=5\pi/3$ - relative $L^2$ errors]{\includegraphics[width=5.5cm]{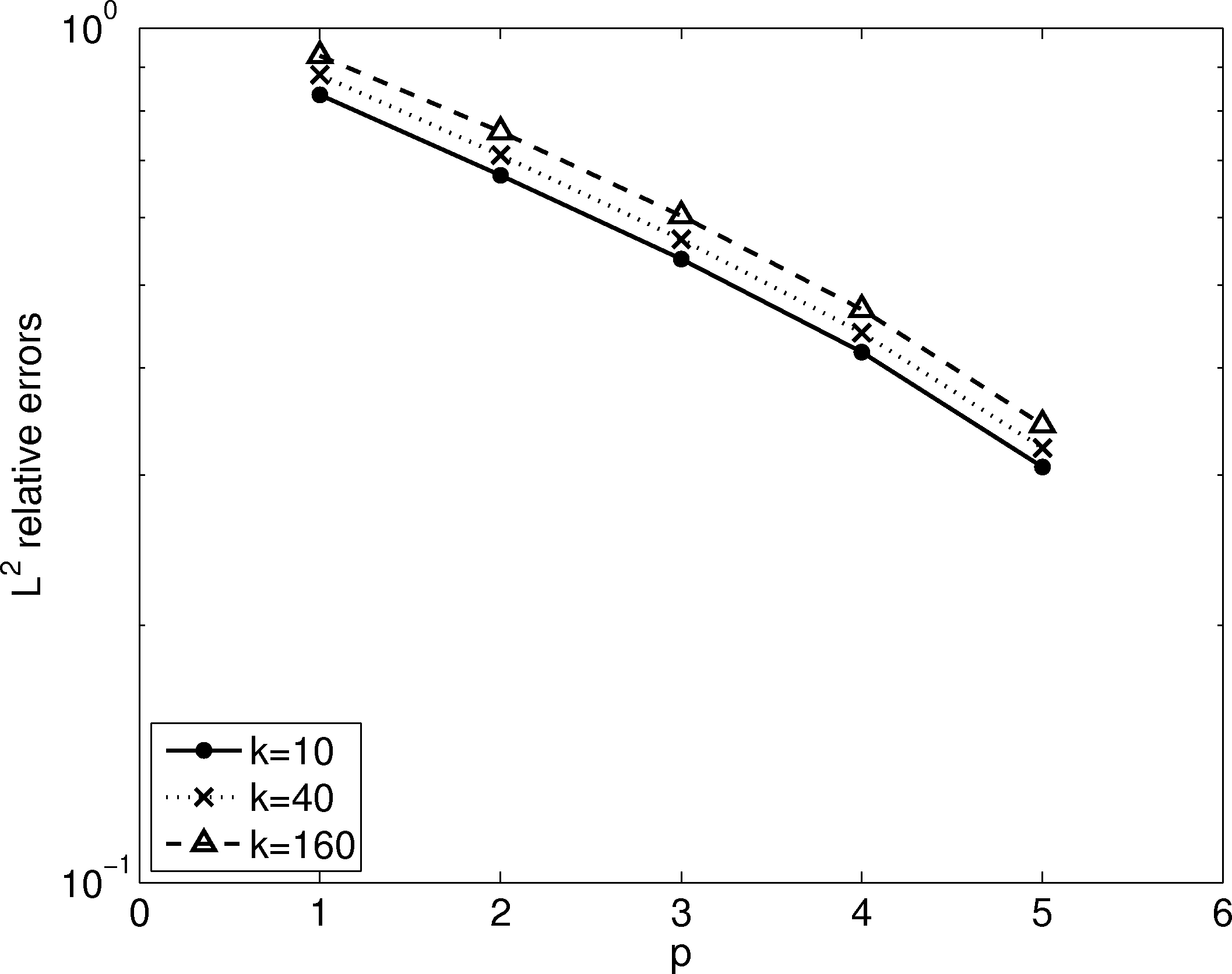}}
\hs{1}
\subfigure[$\alpha=5\pi/3$ - relative $L^1$ errors]{\includegraphics[width=5.5cm]{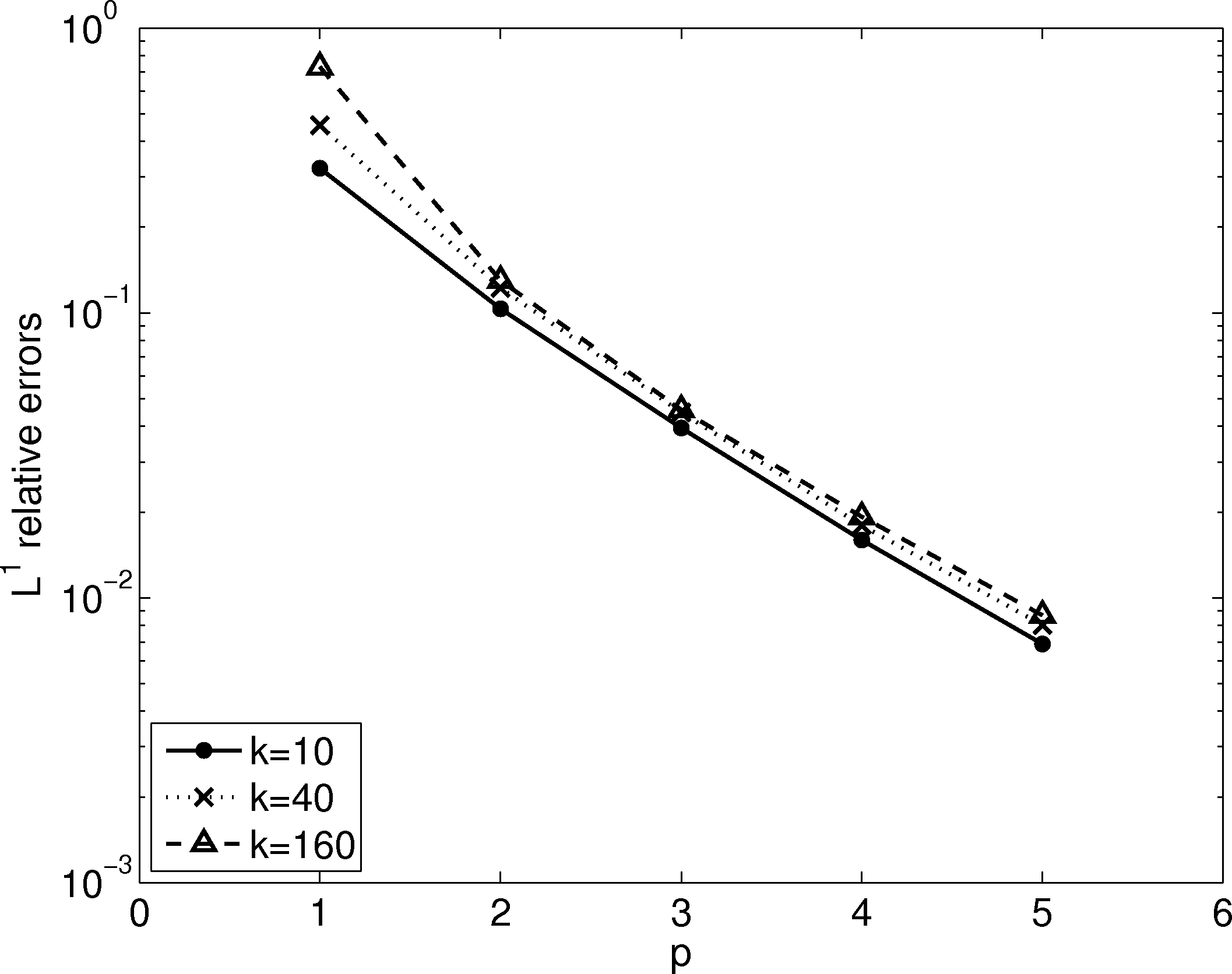}}
\end{center}
\caption{Relative $L^2$ and $L^1$ errors in boundary solution.}
\label{fig:rel_errors}
\end{figure}
We take the ``exact'' reference solutions to be those computed with $p=7$, as plotted in Figure~\ref{fig:bdy_soln1} for the case $\alpha =5\pi/3$.  The $L^2$ and $L^1$ norms are computed by high-order composite Gaussian quadrature on a mesh graded towards the corner singularities; experimental evidence suggests that these calculations are accurate to at least two significant figures.

Figure~\ref{fig:rel_errors} shows the exponential decay as $p$ increases that is predicted for the $L^2$ error by~(\ref{GalerkinErrorEst}).  %
 A key question is how the accuracy depends on $k$; we see that in all four plots in Figure~\ref{fig:rel_errors} the relative errors increase only very mildly as $k$ increases. To investigate this further, in Table~\ref{table2} we show results for the two angles of incidence for $p=4$ (and hence $N=320$), for a range of $k$. We tabulate $L^2$ errors, relative $L^2$ and $L^1$ errors, and also $N/(L/\lambda)$, the average number of degrees of freedom per wavelength.  As $k$ increases, the relative errors increase very slowly, the absolute $L^2$ error actually decreases, while the average number of degrees of freedom per wavelength decreases in proportion to $k^{-1}$.
\begin{table}[htbp]
  \caption{$L^2$ and $L^1$ errors for each example, fixed $p=4$ (and hence $N=320$), various~$k$, with $N/(L/\lambda)$ the average number of degrees of freedom per wavelength along the boundary.}
  \label{table2}
  \begin{center}
    \begin{tabular}{|c|c|r|c|c|c|c|c|}
       \hline
      $\alpha$ & $k$ & $\frac{N}{L/\lambda}$  & $\|\psi_7-\psi_4\|_{L^2(\Gamma)}$ & $\mu$ & $\frac{\|\psi_7-\psi_4\|_{L^2(\Gamma)}}{\|\psi_7\|_{L^2(\Gamma)}}$ & $\frac{\|\psi_7-\psi_4\|_{L^1(\Gamma)}}{\|\psi_7\|_{L^1(\Gamma)}}$ & COND \\
      \hline
 $5\pi/4$ &   5 &  10.67 &   8.37$\times10^{-1}$ &  -0.35 &   3.90$\times10^{-1}$ &   1.03$\times10^{-2}$ & 3.36$\times10^{5}$ \\
          &  10 &   5.33 &   6.55$\times10^{-1}$ &  -0.19 &   4.04$\times10^{-1}$ &   1.43$\times10^{-2}$ & 1.87$\times10^{2}$ \\
          &  20 &   2.67 &   5.72$\times10^{-1}$ &  -0.29 &   4.24$\times10^{-1}$ &   1.69$\times10^{-2}$ & 1.34$\times10^{2}$ \\
          &  40 &   1.33 &   4.68$\times10^{-1}$ &  -0.91 &   4.47$\times10^{-1}$ &   1.85$\times10^{-2}$ & 1.73$\times10^{2}$ \\
          &  80 &   0.67 &   2.48$\times10^{-1}$ &  -0.20 &   4.39$\times10^{-1}$ &   1.91$\times10^{-2}$ & 2.30$\times10^{2}$ \\
          & 160 &   0.33 &   2.16$\times10^{-1}$ &        &   4.62$\times10^{-1}$ &   2.09$\times10^{-2}$ & 3.03$\times10^{2}$ \\
   \hline
 $5\pi/3$ &   5 &  10.67 &   8.64$\times10^{-1}$ &  -0.46 &   4.05$\times10^{-1}$ &   1.17$\times10^{-2}$ & 3.36$\times10^{5}$ \\
          &  10 &   5.33 &   6.30$\times10^{-1}$ &  -0.54 &   4.18$\times10^{-1}$ &   1.60$\times10^{-2}$ & 1.87$\times10^{2}$ \\
          &  20 &   2.67 &   4.32$\times10^{-1}$ &  -0.46 &   4.27$\times10^{-1}$ &   1.80$\times10^{-2}$ & 1.34$\times10^{2}$ \\
          &  40 &   1.33 &   3.15$\times10^{-1}$ &  -0.46 &   4.40$\times10^{-1}$ &   1.80$\times10^{-2}$ & 1.73$\times10^{2}$ \\
          &  80 &   0.67 &   2.30$\times10^{-1}$ &  -0.45 &   4.54$\times10^{-1}$ &   1.88$\times10^{-2}$ & 2.30$\times10^{2}$ \\
          & 160 &   0.33 &   1.69$\times10^{-1}$ &        &   4.69$\times10^{-1}$ &   1.92$\times10^{-2}$ & 3.03$\times10^{2}$ \\
   \hline
    \end{tabular}
  \end{center}
\end{table}
We also tabulate $\log_2(\mbox{error}(2k)/\mbox{error}(k))$,  where error$(k)$ refers to the absolute $L^2$ error for a particular value of $k$. This is an estimate of the order of convergence, $\mu$, on a hypothesis that $\mbox{error}(k) \sim k^{\mu}$ as $k\rightarrow\infty$. Since, for this scatterer, $\delta_*\approx0.4350$,  a value $\mu\approx 0.5650$ is the largest consistent with the bound (\ref{GalerkinErrorEstStarlike}). In fact, we see values in the range $(-0.91,-0.19)$, suggestive that the bound (\ref{GalerkinErrorEstStarlike}) overestimates the error growth as $k$ increases. In part this overestimate may be due to using the bound \eqref{MBoundProp2} to get \eqref{GalerkinErrorEstStarlike}; as noted in Remark~\ref{rem:M} we conjecture that in fact $\uM=\ord{1}$ as $k\to\infty$.

In the final column of Table~\ref{table2} we also show the condition number (COND) of the $N$-dimensional linear system arising from~\rf{eqn:gal}.  For fixed $p=4$, the condition number increases slowly as $k$ increases, for $k\geq10$.  The condition number is significantly larger for $k=5$.  We investigate the dependence of the condition number on both $k$ and $p$ further in Figure~\ref{fig:cond}.
\begin{figure}[htbp]
\begin{center}
  \includegraphics[height=7cm]{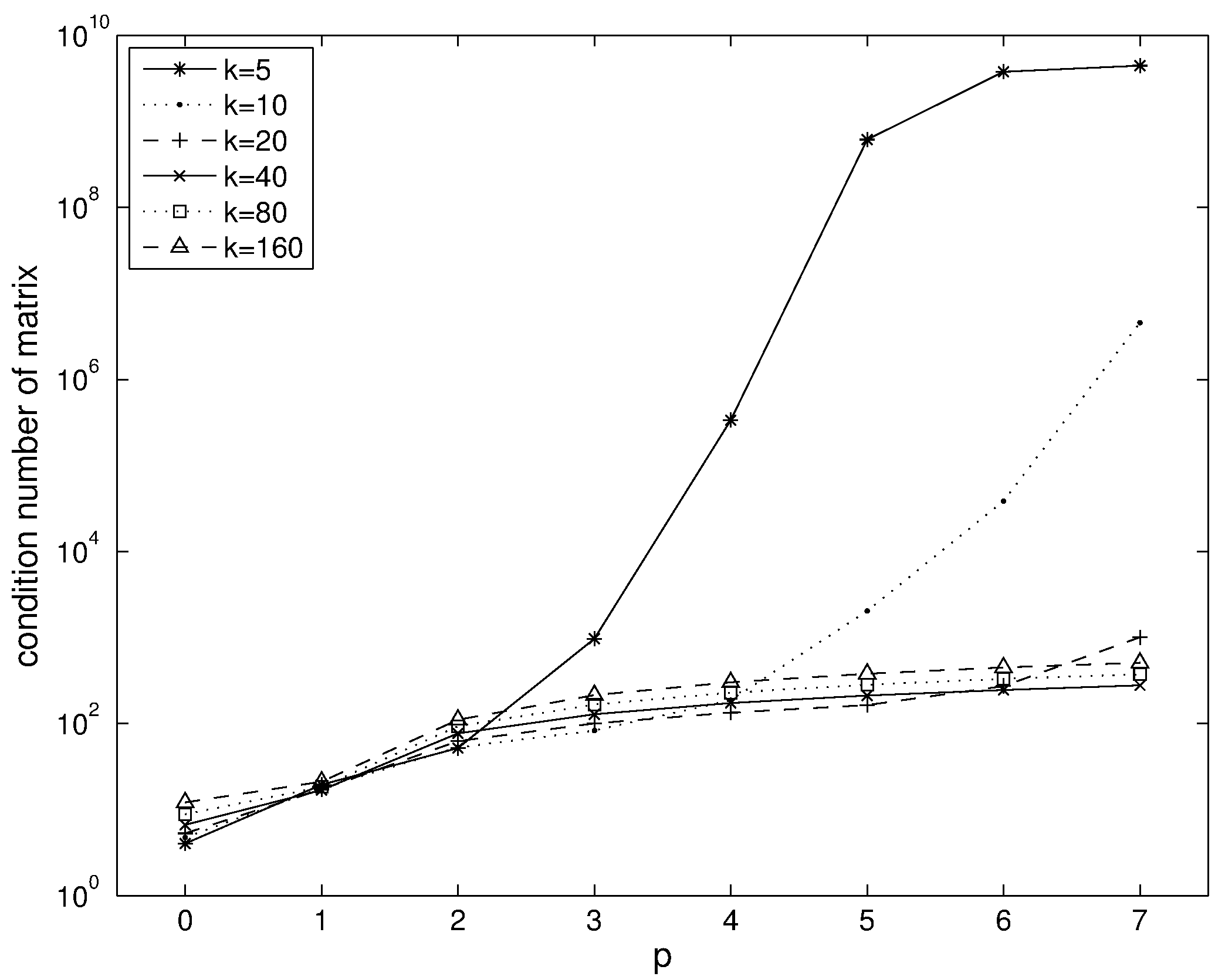}
\end{center}
\caption{Condition number of the $N$-dimensional linear system arising from~\rf{eqn:gal}.}
\label{fig:cond}
\end{figure}
For fixed $k$, the condition number grows slowly (approximately with $O(\log^2 p)$) as $p$ increases, for $p<p_0$, before growing approximately exponentially with respect to $p$ for $p>p_0$, where the value of $p_0$ appears to increase as $k$ increases; indeed, it appears from Figure~\ref{fig:cond} that, for $k=5, 10, 20$,
the value of $p_0$ corresponds to the point at which our discretisation is equivalent to approximately 5--7 degrees of freedom per wavelength (recalling that  $N = 12p^2 + 28p + 16$ and the boundary is $6k$ wavelengths long).
For $k=5$, as $p$ increases beyond $p=5$ (representing approximately 15 degrees of freedom per wavelength) this exponential growth appears to tail off.

We now return to Figure~\ref{fig:rel_errors} where we see that the $L^2$ errors, while decreasing exponentially as $p$ increases, are large in absolute value. Errors of a similar magnitude are seen in the corresponding convex case \cite{HeLaMe:11}. There it is noted that the $L^2$ errors blow up as the largest exterior angle, $\omega_{\rm max}$, approaches $2\pi$, this because $\|\varphi\|_{L^2(\Gamma)}$ itself blows up in the same limit (this can be seen from the bound \eqref{vjpmBounds} which is sharp in the limit $s\to0$). Thus large $L^2$ errors are inevitable for $\omega_{\rm max}$ close to $2\pi$. One ``solution'' is to measure errors in a more appropriate norm: in particular this blow up is not seen in the $L^1$ norm and, indeed, the relative $L^1$ errors in Figure \ref{fig:rel_errors} are 20--40 times smaller than the corresponding $L^2$ errors (note the different scales in (b) and (d) compared to (a) and (c)).

We now turn our attention to the approximation of $u(\bx)$, $\bx\in D$, and of the far field pattern $F$ (often the quantities of real interest in scattering problems).  As is common for linear functionals of the solution on the boundary, the errors in $u(\bx)$ and $F(\hat \bx)$ are, in general, much smaller than the relative errors in $\varphi$.  To investigate the accuracy of $u_N(\bx)$, we compute the error in this solution on a circle of radius $3\pi$ surrounding the scatterer, as illustrated in Figure~\ref{fig:total1}.  To allow easy comparison between different discretizations, noting again that for each example $N$ depends only on $p$, we denote the solution on this circle  (with a slight abuse of notation) by
$u_p(t) := u_N(\bx(t))$, $t\in[0,2\pi]$,
where $t=0$ corresponds to the direction from which $u^i$ is incident, and $\bx(t)$ is a point at angular distance $t$ around the circle.

In Figure~\ref{fig:total4} we plot for each example the relative maximum error on the circle,
\[ \frac{\max_{t\in[0,2\pi]}|u_7(t)-u_p(t)|}{\max_{t\in[0,2\pi]}|u_7(t)|}, \]
computed over 30,000 evenly spaced points in $[0,2\pi]$, for $k=10$, $40$, and $160$.
\begin{figure}[htbp]
\begin{center}
\subfigure[$\alpha=5\pi/4$]{\includegraphics[width=5.9cm]{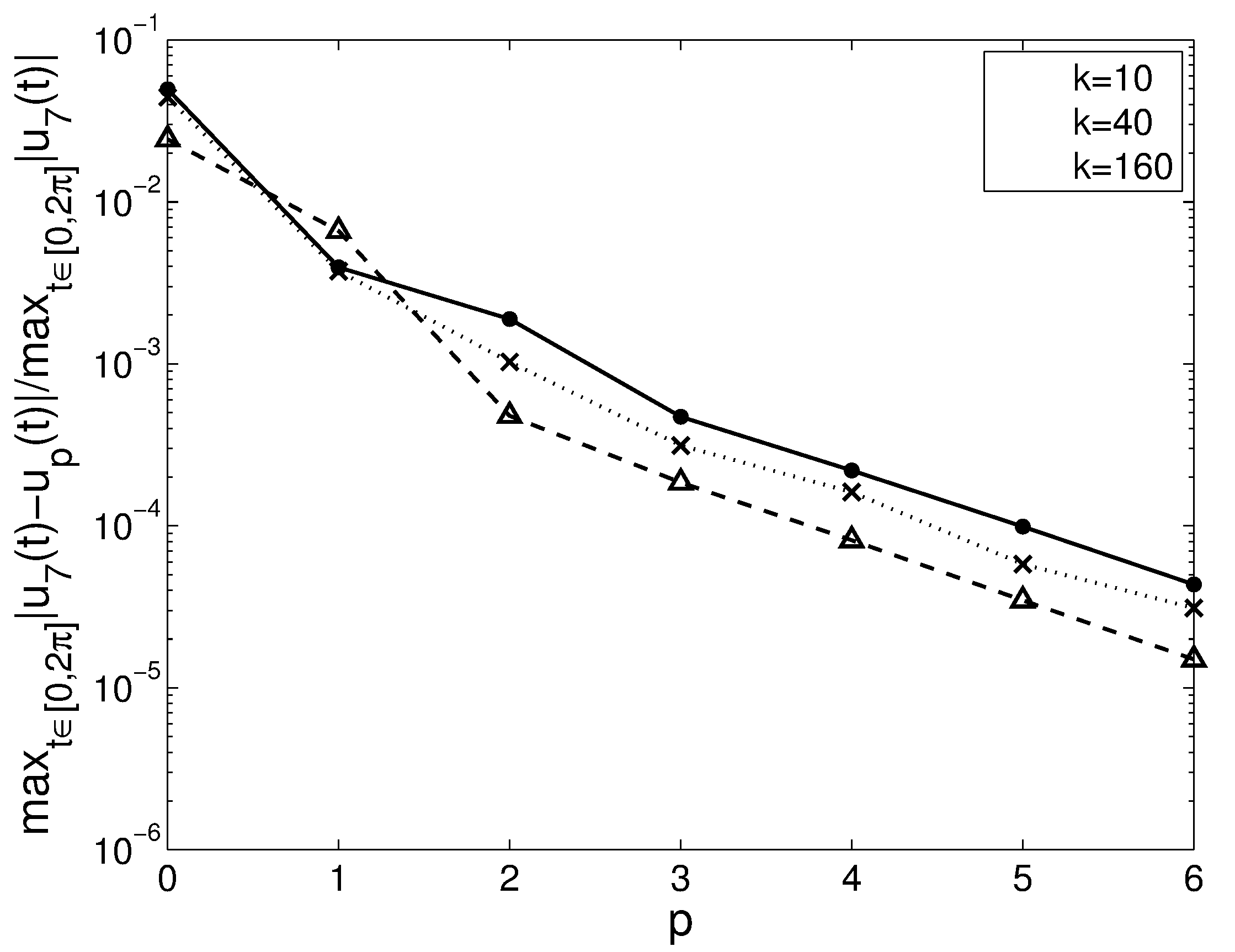}}
\hs{2}
\subfigure[$\alpha=5\pi/3$]{\includegraphics[width=5.9cm]{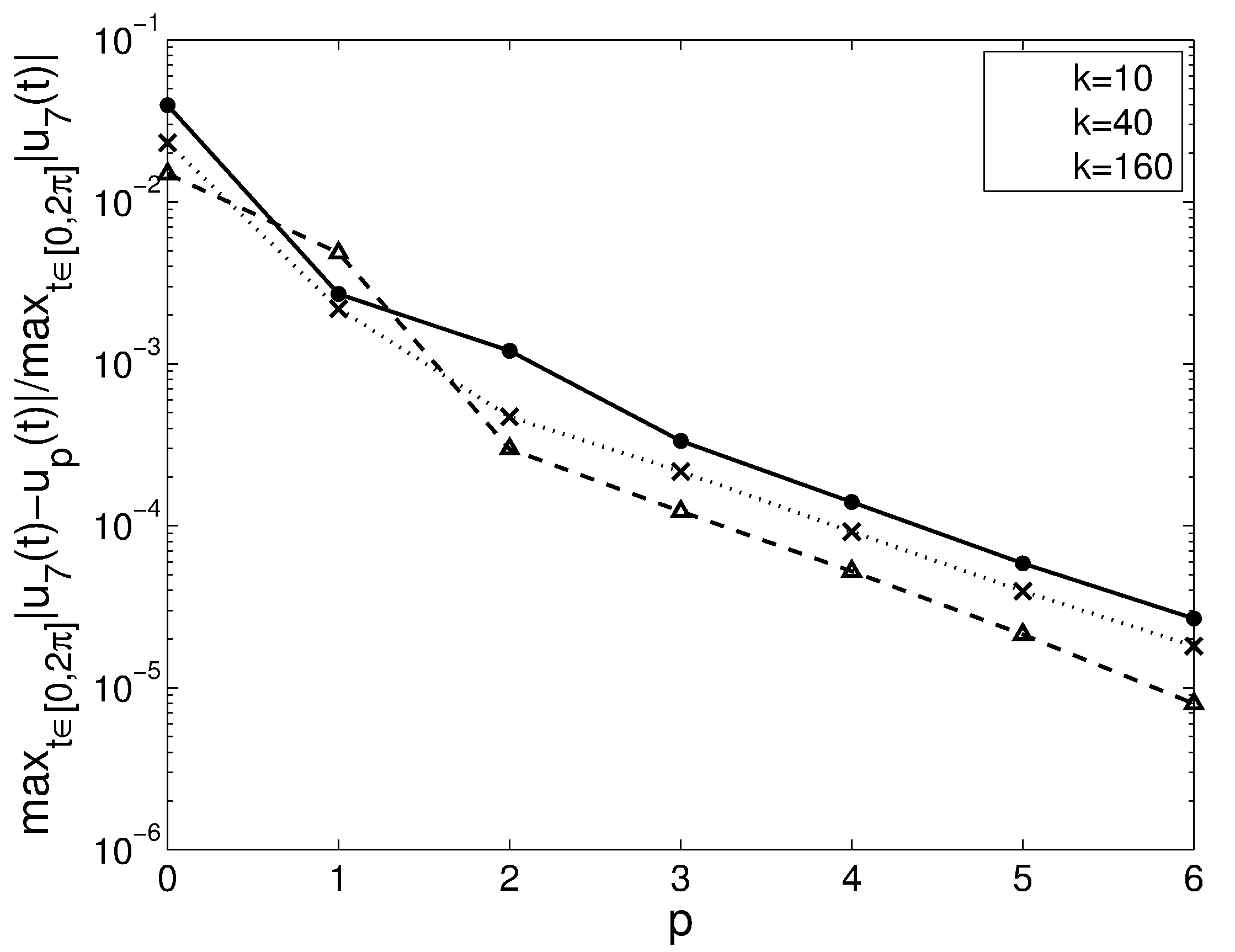}}
\end{center}
\caption{Relative maximum errors on the circle of Figure~\ref{fig:total1}.}
\label{fig:total4}
\end{figure}
The exponential decay as $p$ increases predicted by Theorem~\ref{DomainThm} is clear. %
 Moreover, for fixed $p\geq 2$, the relative maximum error decreases as $k$ increases; this is better than the mild growth with $k$ of the bound \eqref{DomainErrorEstStarlike}. %
 These relative errors are much smaller than those on the boundary in Figure~\ref{fig:rel_errors}.  %

Finally, we compute our approximation~(\ref{eqn:FFP_approx}) to the far field pattern.
Again, %
with a slight abuse of notation, we define
$F_p(t) := F_N(\hat{\bx}(t))$, $t\in[0,2\pi]$,
where $t=0$ corresponds to the direction from which $u^i$ is incident and $\hat \bx(t)$ is a point at angular distance $t$ around the unit circle.
Plots of $|F_7(t)|$ (the magnitude of the far field pattern computed with our finest discretization), for $k=10$ and $160$ and the two incident directions, are shown in Figure~\ref{fig:FFP1}.
In Figure~\ref{fig:FFP3} we plot approximations to $\|F_7-F_p\|_{L^{\infty}(\mathbb{S}^1)}$ for $k=10$, $40$, and $160$, for the two incident directions.
\begin{figure}[htbp]
\begin{center}
\subfigure[$\alpha=5\pi/4$, $k=10$]{\includegraphics[width=5.9cm]{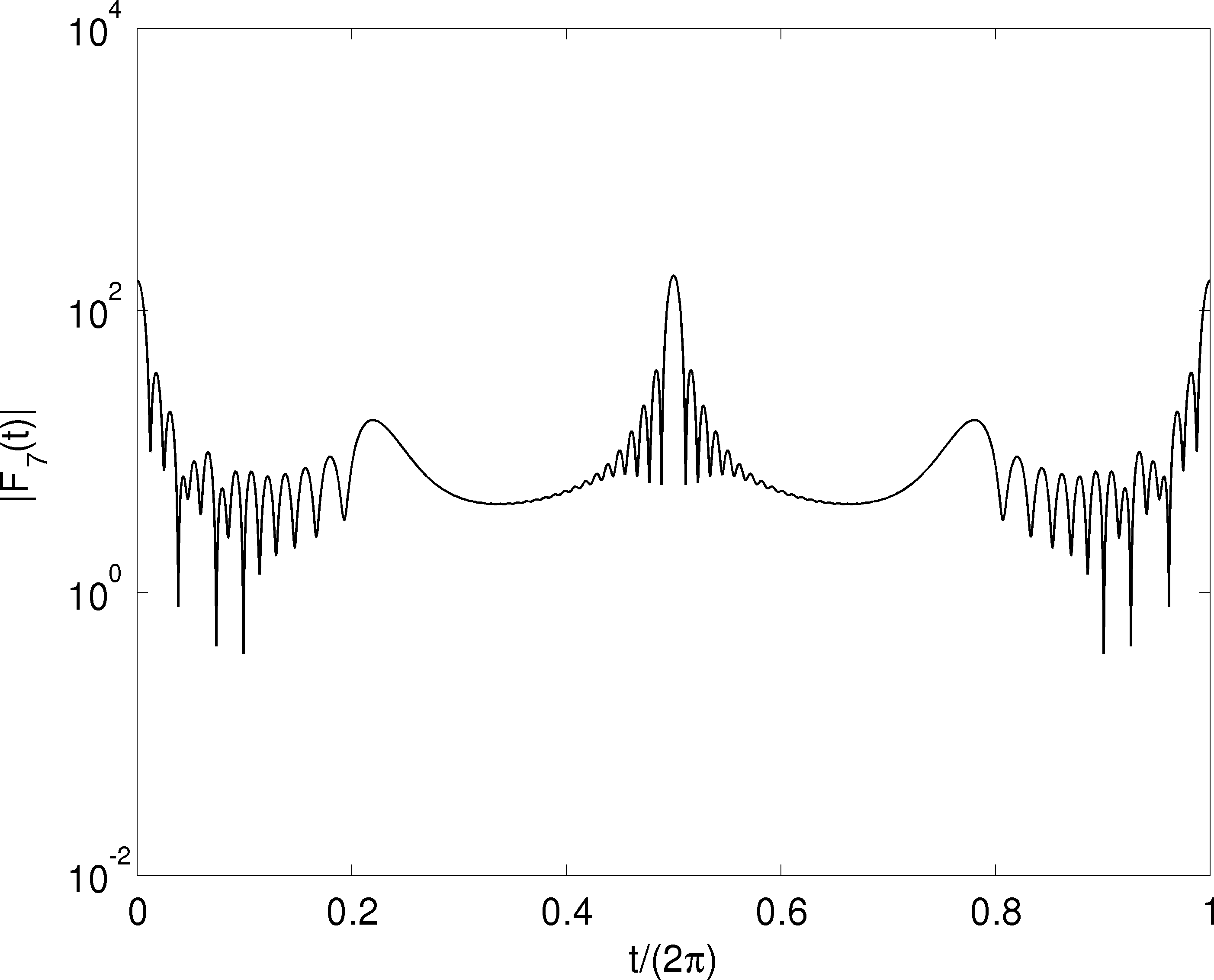}}
\hs{2}
\subfigure[$\alpha=5\pi/4$, $k=160$]{\includegraphics[width=5.9cm]{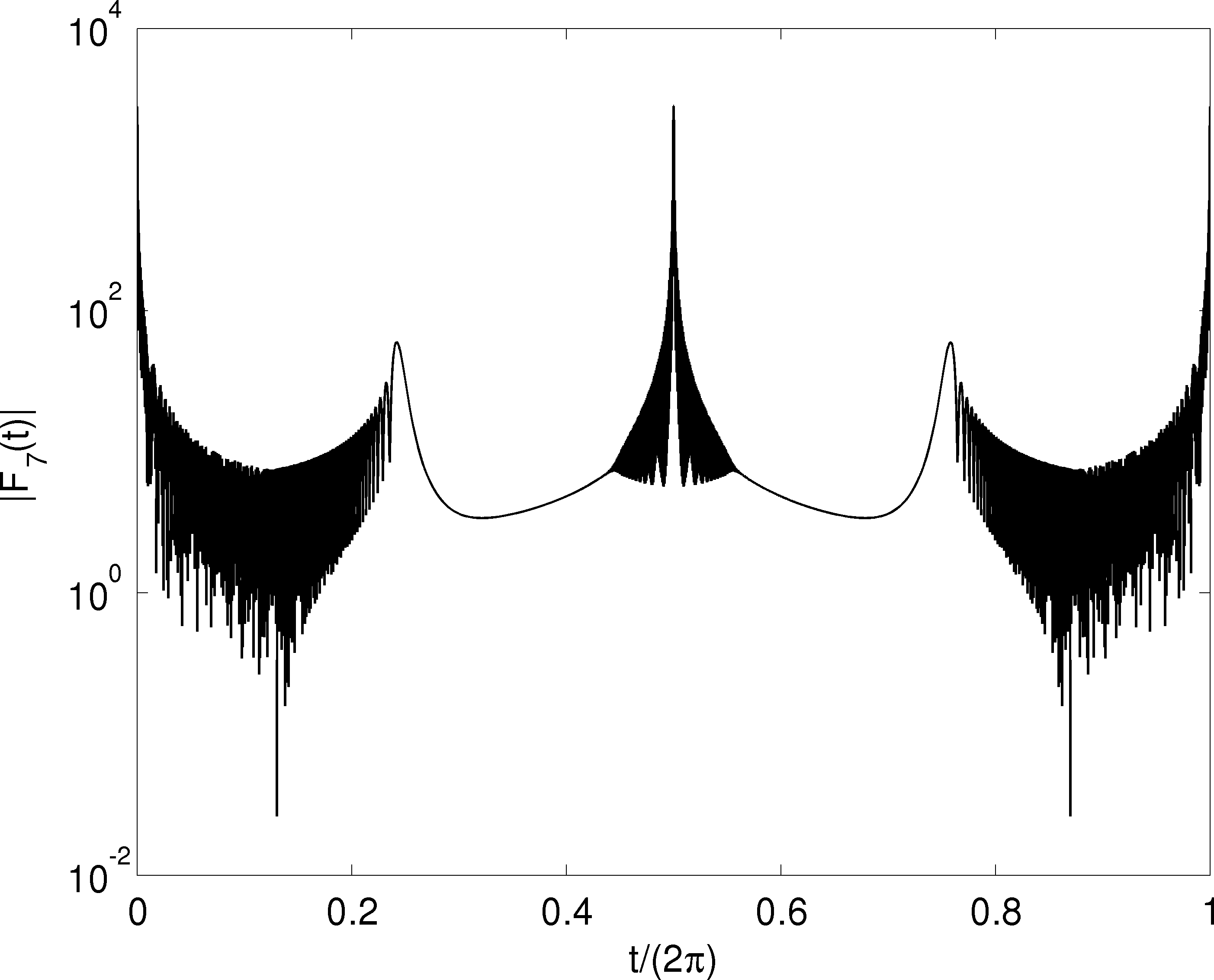}}
\vs{1}
\subfigure[$\alpha=5\pi/3$, $k=10$]{\includegraphics[width=5.9cm]{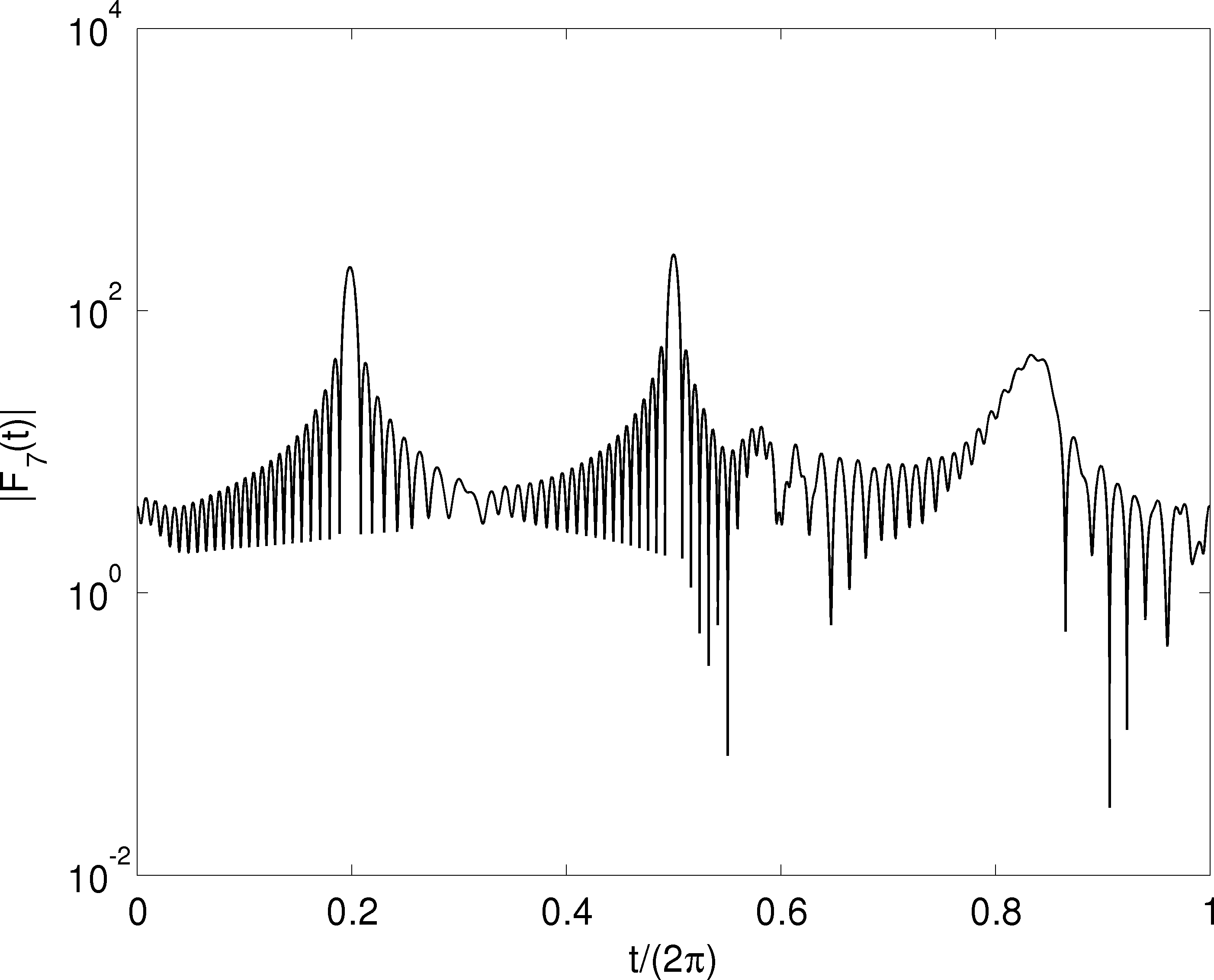}}
\hs{2}
\subfigure[$\alpha=5\pi/3$, $k=160$]{\includegraphics[width=5.9cm]{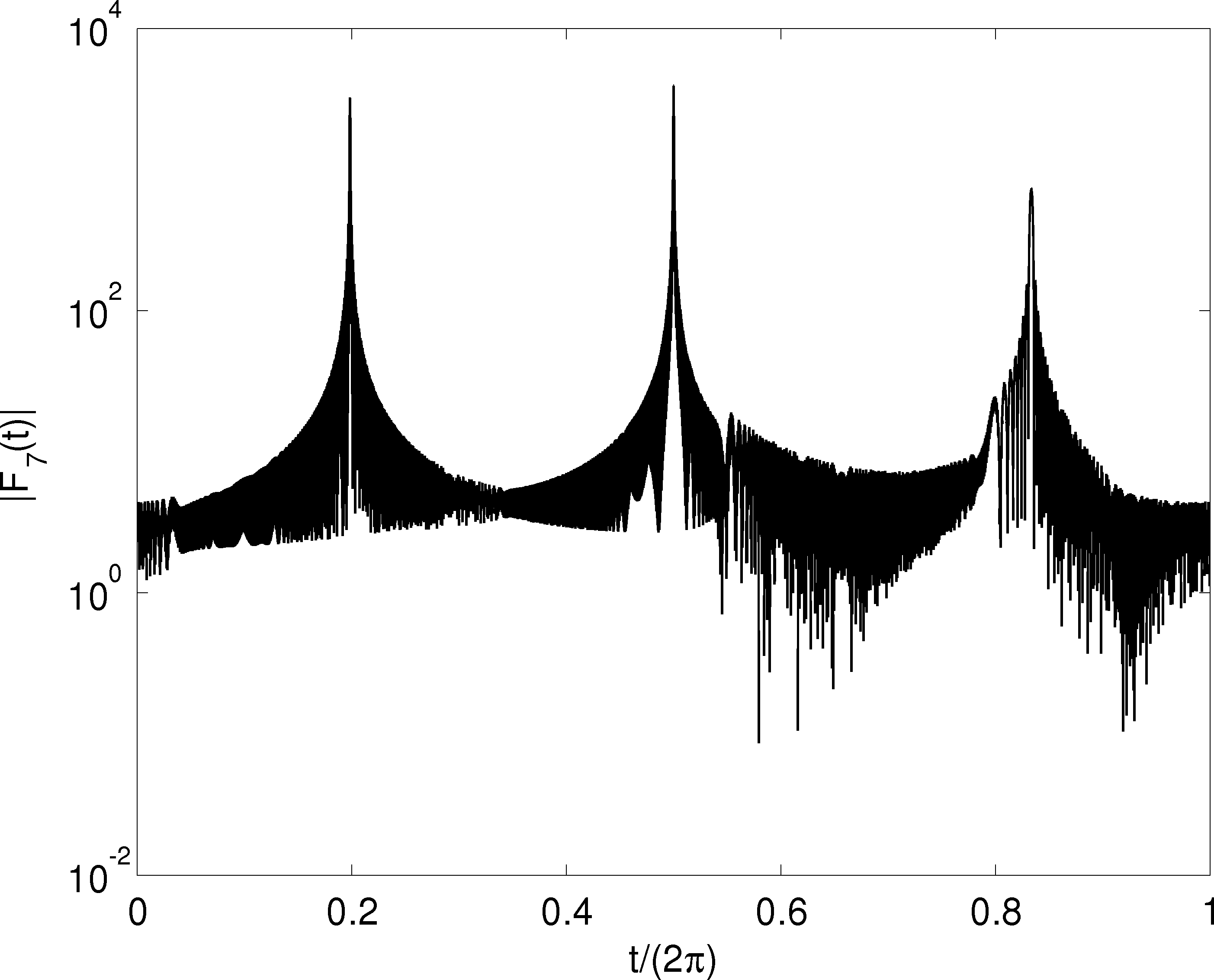}}
\end{center}
\caption{Far field patterns, $|F_7(t)|\approx|F(t)|$, $k=10$ and $k=160$.}
\label{fig:FFP1}
\end{figure}
\begin{figure}[htbp]
\begin{center}
\subfigure[$\alpha=5\pi/4$]{\includegraphics[width=5.9cm]{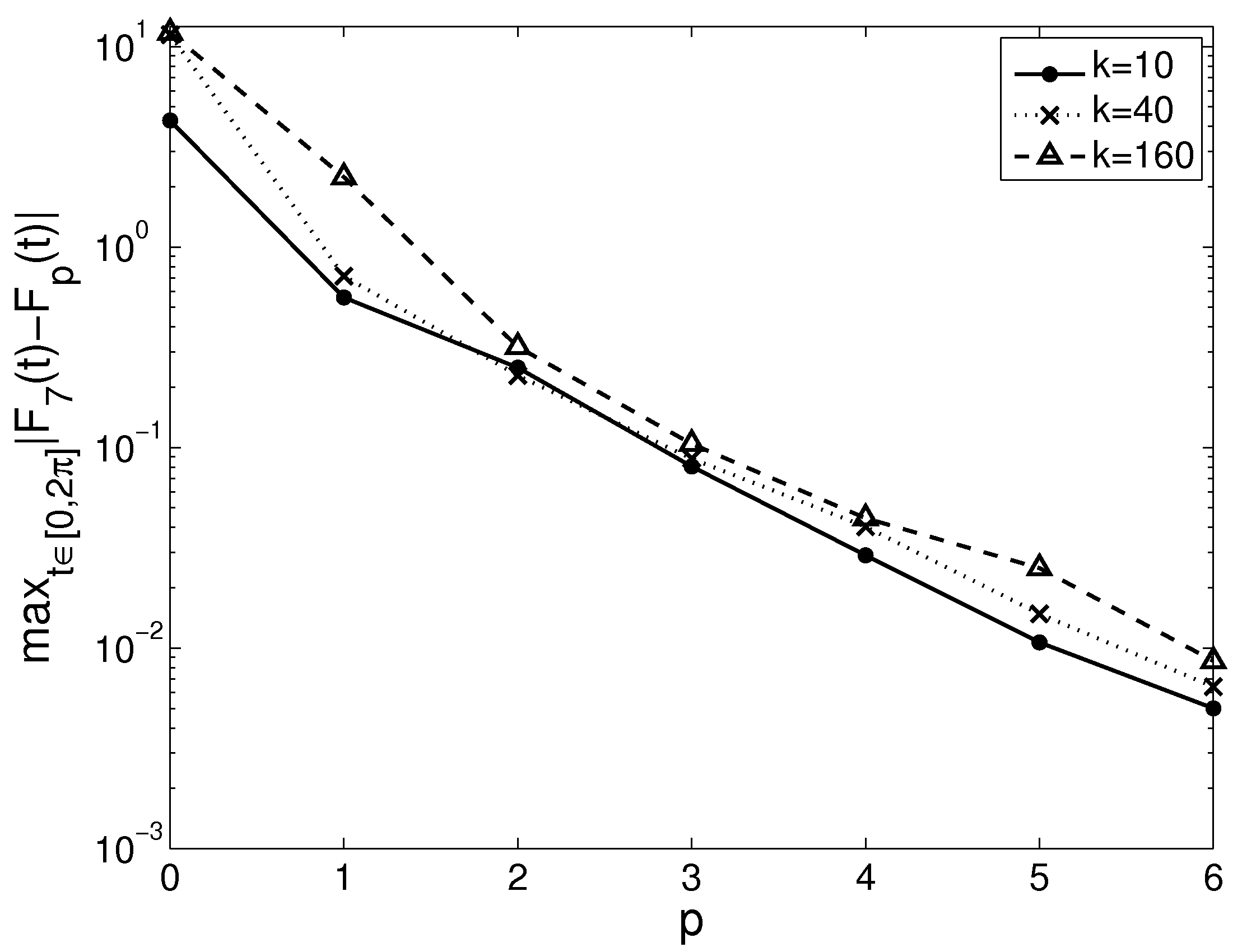}}
\hs{2}
\subfigure[$\alpha=5\pi/3$]{\includegraphics[width=5.9cm]{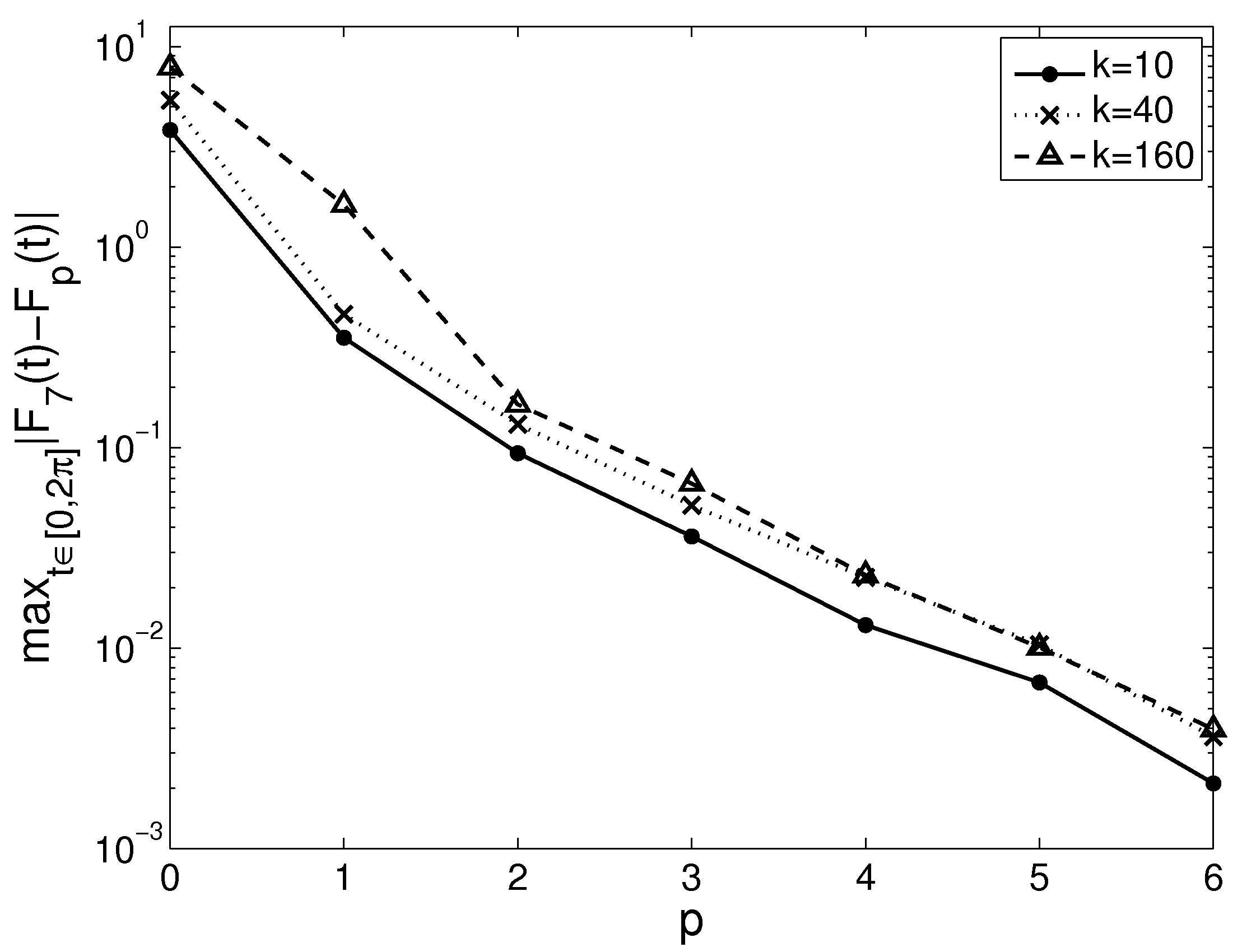}}
\end{center}
\caption{Absolute maximum errors $\|F_7-F_p\|_{L^{\infty}(0,2\pi)}$ in the far field pattern.}%
\label{fig:FFP3}
\end{figure}
To approximate the $L^{\infty}$ norm, we compute $F_7$ and $F_p$ at 30,000 evenly spaced points on the unit circle.
The exponential decay as $p$ increases predicted by Theorem~\ref{FarFieldThm} is clear. %
For fixed $p$, the error does not grow significantly as $k$ increases, indicating that the mild $k$-dependence of the bound~(\ref{FarFieldErrorEstStarlike}) may not be optimal.  The errors are comparable in magnitude for each incidence angle, suggesting that our algorithm copes equally well with cases of multiple reflection and partial illumination.

In summary, our numerical examples demonstrate that the predicted exponential convergence of our $hp$ scheme is achieved in practice.  Moreover, for a fixed number of degrees of freedom, the accuracy of our numerical solution appears to deteriorate only very slowly (or not at all) as the wavenumber $k$ increases.  The $p$- and $k$-dependence of our results appears to mimic closely that of the comparable results for the convex polygon in \cite{HeLaMe:11}.   The $k$-explicit error bounds in Corollary~\ref{StarlikeCor} predict at worst mild growth in errors as $k$ increases, which can be controlled by a logarithmic growth in the degrees of freedom $N$, as discussed in Remark \ref{rem:log2k}. The numerical results support the conjecture that this mild growth is pessimistic; the estimates in Corollary \ref{StarlikeCor} are not quite sharp in their $k$-dependence. We suspect this is due to lack of sharpness in $k$-dependence of the estimate~(\ref{MBoundProp2}) for $\uM$, of our best approximation estimate~(\ref{BestAppdudn}), and of the quasi-optimality estimate \eqref{quasi-opt}.

\section{Discussion - extension to more general nonconvex polygons}
\label{sec:generalisations}

In this section we discuss the possibility of extending our algorithm and analysis to more general nonconvex polygons not in the class $\cC$ of Definition \ref{classCDef}. We provide suggestions, informed by high frequency asymptotics, as to how the conditions of Definition \ref{classCDef} might be relaxed, and what effect this would have on our HNA approximation space and the accompanying analysis.

We first make the rather trivial remark that we expect the ``visibility'' condition (ii) of Definition \ref{classCDef} can be relaxed, without any change to our approximation space, to the following slightly weaker condition, illustrated in Figure \ref{fig:scatgen1}(a).

\medskip
\noindent
Condition (ii)$'$:
For each neighbouring pair $\{\Gamma_{\rm nc}, \Gamma_{\rm nc}'\}$ of nonconvex sides, let $\bP$ and $\bQ$ denote the endpoints of $\Gamma_{\rm nc}$, and let $\bQ$ and $\bR$ denote those of $\Gamma_{\rm nc}'$. Then $\overline\Omega\setminus \{\bP,\bR\}$ must lie entirely on one side of the line (shown as dashed in Figure \ref{fig:scatgen1}(a)) through $\bP$ and $\bR$.

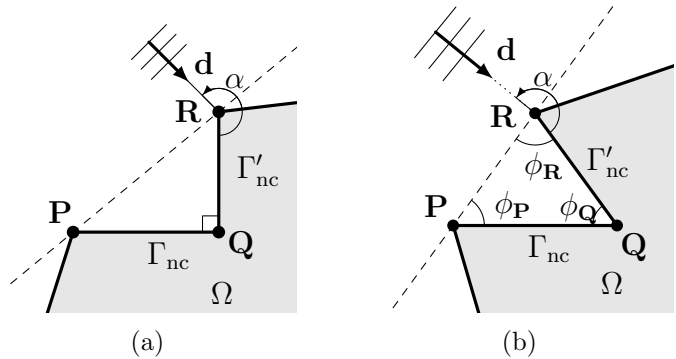
\begin{figure}[htbp]
  \begin{center}
     \subfigure[]{\label{fig:scatgen1a}
     \begin{tikzpicture}[line cap=round,line join=round,>=latex,x=0.5cm,y=0.5cm,scale=1.6]
\clip(-0.5,0.67) rectangle (4.3,5.55);
\draw(3,2.27) -- (2.73,2.27) -- (2.73,2) -- (3,2) -- cycle;
\draw [shift={(3,4)}] (0,0) -- (-90:0.39) arc (-90:135:0.39) -- cycle;
\fill[line width=0pt,color=cqcqcq,fill=cqcqcq] (4.86,4.22) -- (3,4) -- (3,2) -- (0.58,2) -- (0.14,0.62) -- (4.86,0.62) -- cycle;
\draw [line width=1.2pt] (3,4)-- (3,2);
\draw [line width=1.2pt] (0.58,2)-- (3,2);
\draw [line width=1.2pt] (3,4)-- (4.86,4.22);
\draw [line width=1.2pt] (0.58,2)-- (0.14,0.62);
\draw [dash pattern=on 3pt off 3pt,domain=-0.66:4.73] plot(\x,{(-3.68-2*\x)/-2.42});
\draw (1.63,2.01) node[anchor=north west] {$\Gamma_{\rm nc}$};
\draw (3.12,3.47) node[anchor=north west] {$\Gamma_{\rm nc}'$};
\draw (3,4)-- (1.84,5.16);
\draw (2.13,5.45)-- (1.55,4.87);
\draw (2.29,5.29)-- (1.71,4.71);
\draw (2.46,5.12)-- (1.88,4.54);
\draw [->,line width=1.2pt] (1.84,5.16) -- (2.51,4.49);
\draw [shift={(3,4)},->] (-90:0.39) arc (-90:135:0.39);
\draw (2.92,4.8) node[anchor=north west] {$\alpha$};
\draw (2.69,1.34) node[anchor=north west] {$\Omega$};
\draw (2.41,5.18) node[anchor=north west] {$\mathbf{d}$};
\fill [color=black] (3,4) circle (1.5pt);
\draw[color=black] (2.49,4.01) node {$\mathbf{R}$};
\fill [color=black] (3,2) circle (1.5pt);
\draw[color=black] (3.37,1.79) node {$\mathbf{Q}$};
\fill [color=black] (0.58,2) circle (1.5pt);
\draw[color=black] (0.37,2.34) node {$\mathbf{P}$};
\end{tikzpicture}
     }
\hs{5}
     \subfigure[]{\label{fig:scatgen1b}
\begin{tikzpicture}[line cap=round,line join=round,>=latex,x=0.6cm,y=0.6cm,scale=1.5]
\clip(-0.56,0.72) rectangle (3.83,5.3);
\draw [shift={(3,2)}] (0,0) -- (126.04:0.36) arc (126.04:180:0.36) -- cycle;
\draw [shift={(1.79,3.66)}] (0,0) -- (-53.96:0.36) arc (-53.96:140.91:0.36) -- cycle;
\fill[line width=0pt,color=cqcqcq,fill=cqcqcq] (1.07,0.32) -- (4.51,0.32) -- (4.51,4.59) -- (1.79,3.66) -- (3,2) -- (0.58,2) -- cycle;
\draw [line width=1.2pt] (1.79,3.66)-- (3,2);
\draw [line width=1.2pt] (0.58,2)-- (3,2);
\draw [line width=1.2pt] (1.79,3.66)-- (4.51,4.59);
\draw [line width=1.2pt] (0.58,2)-- (1.07,0.32);
\draw [dash pattern=on 3pt off 3pt,domain=-0.86:4.14] plot(\x,{(-1.46-1.66*\x)/-1.21});
\draw (1.52,2.01) node[anchor=north west] {$\Gamma_{\rm nc}$};
\draw (2.39,3.33) node[anchor=north west] {$\Gamma_{\rm nc}'$};
\draw [dotted] (1.79,3.66)-- (0.32,4.86);
\draw (0.62,5.23)-- (0.01,4.48);
\draw (0.83,5.07)-- (0.22,4.32);
\draw (1.04,4.89)-- (0.43,4.14);
\draw [->,line width=1.2pt] (0.32,4.86) -- (1.17,4.17);
\draw [shift={(1.79,3.66)},->] (-53.96:0.36) arc (-53.96:140.91:0.36);
\draw (1.6,4.43) node[anchor=north west] {$\alpha$};
\draw [shift={(1.79,3.66)}] plot[domain=4.08:5.34,variable=\t]({1*0.47*cos(\t r)+0*0.47*sin(\t r)},{0*0.47*cos(\t r)+1*0.47*sin(\t r)});
\draw (1.47,3.25) node[anchor=north west] {$\phi_\bR$};
\draw (1.03,2.67) node[anchor=north west] {$\phi_\bP$};
\draw [shift={(0.58,2)}] plot[domain=0:0.94,variable=\t]({1*0.46*cos(\t r)+0*0.46*sin(\t r)},{0*0.46*cos(\t r)+1*0.46*sin(\t r)});
\draw (2.6,1.47) node[anchor=north west] {$\Omega$};
\draw (1.99,2.64) node[anchor=north west] {$\phi_\bQ$};
\draw (1.06,4.94) node[anchor=north west] {$\mathbf{d}$};
\fill [color=black] (1.79,3.66) circle (1.5pt);
\draw[color=black] (1.32,3.62) node {$\mathbf{R}$};
\fill [color=black] (3,2) circle (1.5pt);
\draw[color=black] (3.25,1.68) node {$\mathbf{Q}$};
\fill [color=black] (0.58,2) circle (1.5pt);
\draw[color=black] (0.33,2.29) node {$\mathbf{P}$};
\end{tikzpicture}
}
  \end{center}
  \caption{Generalising the conditions of Definition \ref{classCDef}.
The whole of $\overline\Omega\setminus \{\bP,\bR\}$ must lie on one side of the (dashed) line through $\bP$ and $\bR$.}
  \label{fig:scatgen1}
\end{figure}
This weakened assumption is still sufficient to ensure that only three corners of $\Omega$ are visible at any point of $\Gamma$; it also ensures that any shadow boundaries associated with diffraction at $\bP$ and $\bR$ of waves scattered from other parts of $\Gamma$ do not intersect $\Gamma_{\rm nc}$ or $\Gamma_{\rm nc}'$, respectively.
We believe it should also be possible to extend our rigorous analysis to this case; in particular we expect that Theorem \ref{dudnThm}, for example, should still hold. However, a proof of this would require modification and generalisation of the results in Lemmas \ref{ImChiLem}--\ref{SkLem}, which we have yet to achieve. %

Next we consider relaxing the ``orthogonality'' condition (i) of Definition \ref{classCDef}, which stipulates that neighbouring nonconvex sides must meet at right-angles.
We expect this condition can be relaxed completely to allow the angle $\phi_\bQ$ between $\Gamma_{\rm nc}$ and $\Gamma_{\rm nc}'$ to be any angle between $0$ and $\pi$, with condition (ii) of Definition \ref{classCDef} replaced by condition (ii)$'$ above, with the more general geometry illustrated in Figure \ref{fig:scatgen1}(b). However, to return similar performance and accuracy for the same number of degrees of freedom we would, to cope with this extension, need to make significant changes to our HNA approximation space on $\Gamma_{\rm nc}$, as we now explain. In general, the complexity of the approximation space (in particular the number of terms required in the ansatz \rf{eqn:ansatz}) will need to increase as the angle $\phi_\bQ$ decreases, in order to capture the increasing number of multiple reflections that can occur between the two sides $\Gamma_{\rm nc}$ and $\Gamma_{\rm nc}'$. The form of the approximation space will also differ depending on whether or not $\pi/\phi_\bQ$ is an integer.

We first consider the case where $\phi_\bQ=\pi/m$ for some integer $m\geq 2$.
(In this case, we note that the method of images provides a simple closed form Green's function for the relevant canonical problem of scattering in a sector of angle $\phi_\bQ$.)
Informed by the case $m=2$ (cf.\ in particular the discussion in Remark \ref{rem:LO}), we would define our (known) ``leading order'' behaviour (i.e.\ the generalisation of the first term $\Psi$ in \rf{dudnGamma2Rep}) to be two times the normal derivative of a modified geometrical optics approximation to $\pdonetext{u}{\bn}$ on $\Gamma_{\rm nc}$, which would be a sum of $m$ terms corresponding to the incident wave and the $m-1$ higher order reflections of it in the sides $\Gamma_{\rm nc}$ and $\Gamma_{\rm nc}'$, with Fresnel integrals used to deal with shadow boundary effects.
According to the principles of the Geometrical Theory of Diffraction (see, e.g.,\ \cite{BoKi:94}), the remainder of the field on $\Gamma_{\rm nc}$ should then comprise diffracted waves emanating from $\bP$ and $\bR$, and the (multiple) reflections of these waves in the sides $\Gamma_{\rm nc}$ and $\Gamma_{\rm nc}'$ (we shall call such waves ``diffracted-reflected").
To determine the phases associated with each of the diffracted-reflected waves, we appeal to the method of images, thinking of each diffracted-reflected wave as emanating from a certain ``image corner'', obtained by an appropriate series of reflections of either $\bP$ or $\bR$ in the lines $\Gamma_{\rm nc}$ and $\Gamma_{\rm nc}'$.
Recalling from the case $m=2$ the interpretation in Remark \ref{rem:LO} of the second term in \rf{dudnGamma2Rep} as originating from the image corner $\bP'$ shown in Figure \ref{fig-1}(a), we can rewrite the second, third and fourth terms on the right hand side of \rf{dudnGamma2Rep} as
\begin{align}
\label{eqn:GeneralisedAnsatz0}
v_\bP \re^{\ri kr_\bP} + v_\bR \re^{\ri kr_\bR} + v_{\bP'} \re^{\ri kr_{\bP'}},
\end{align}
where, for an observation point $\bx\in\Gamma_{\rm nc}$, we define $r_\bP:=\|\bx-\bP\|$ etc. Recall that the amplitude $v_\bP$ is approximated on a mesh geometrically graded towards $\bP$, and $v_\bR$ and $v_{\bP'}$ are approximated by single polynomials supported on the whole side $\Gamma_{\rm nc}$.
The situation for the case $m=3$ is illustrated in Figure \ref{fig:scatgen2}(a).
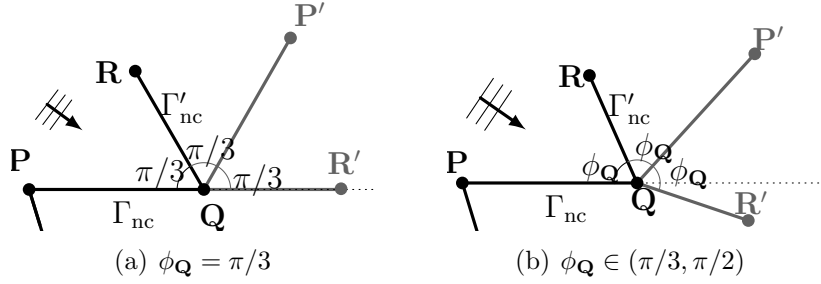
\begin{figure}[tbp]
  \begin{center}
  \hs{-5}
\definecolor{wqwqwq}{rgb}{0.38,0.38,0.38}
     \subfigure[$\phi_\bQ=\pi/3$]{\label{fig:scatgen2a}
\begin{tikzpicture}[line cap=round,line join=round,>=latex,x=0.6cm,y=0.6cm, scale=1.6]
\clip(0.34,1.43) rectangle (5.35,4.7);
\draw [line width=1.2pt] (2.05,3.64)-- (3,2);
\draw [line width=1.2pt,color=wqwqwq] (4.2,4.1)-- (3,2);
\draw [line width=1.2pt,color=wqwqwq] (4.9,2)-- (3,2);
\draw [line width=1.2pt] (0.58,2)-- (3,2);
\draw [line width=1.2pt] (0.58,2)-- (0.86,1.08);
\draw (1.58,1.98) node[anchor=north west] {$\Gamma_{\rm nc}$};
\draw (2.25,3.5) node[anchor=north west] {$\Gamma_{\rm nc}'$};
\draw (0.98,3.38)-- (0.67,2.97);
\draw (1.08,3.31)-- (0.77,2.9);
\draw (1.17,3.24)-- (0.87,2.83);
\draw [->,line width=1.2pt] (0.83,3.18) -- (1.32,2.82);
\draw [line width=1.2pt] (0.58,2)-- (0.73,1.51);
\draw [dotted,domain=0.58:6.39010660819369] plot(\x,{(--4.84-0*\x)/2.42});
\draw [shift={(3,2)}] plot[domain=2.1:3.14,variable=\t]({1*0.37*cos(\t r)+0*0.37*sin(\t r)},{0*0.37*cos(\t r)+1*0.37*sin(\t r)});
\draw [shift={(3,2)},color=wqwqwq]  plot[domain=2.1:3.14,variable=\t]({-0.5*0.37*cos(\t r)+-0.87*0.37*sin(\t r)},{-0.87*0.37*cos(\t r)+0.5*0.37*sin(\t r)});
\draw [shift={(3,2)},color=wqwqwq]  plot[domain=2.1:3.14,variable=\t]({-0.5*0.37*cos(\t r)+0.87*0.37*sin(\t r)},{-0.87*0.37*cos(\t r)+-0.5*0.37*sin(\t r)});
\draw (1.9,2.6) node[anchor=north west] {$\pi/3$};
\draw (2.6,2.9) node[anchor=north west] {$\pi/3$};
\draw (3.25,2.5) node[anchor=north west] {$\pi/3$};
\fill [color=black] (2.05,3.64) circle (1.5pt);
\draw[color=black] (1.68,3.63) node {$\mathbf{R}$};
\fill [color=black] (0.58,2) circle (1.5pt);
\draw[color=black] (0.45,2.41) node {$\mathbf{P}$};
\fill [color=black] (3,2) circle (1.5pt);
\draw[color=black] (3.12,1.6) node {$\mathbf{Q}$};
\fill [color=wqwqwq] (4.2,4.1) circle (1.5pt);
\draw[color=wqwqwq] (4.47,4.43) node {$\mathbf{P}'$};
\fill [color=wqwqwq] (4.9,2.01) circle (1.5pt);
\draw[color=wqwqwq] (4.95,2.38) node {$\mathbf{R}'$};
\end{tikzpicture}
}
  \hs{3}
     \subfigure[$\phi_\bQ\in(\pi/3,\pi/2)$]{\label{fig:scatgen2b}
     \definecolor{wqwqwq}{rgb}{0.38,0.38,0.38}
\begin{tikzpicture}[line cap=round,line join=round,>=latex,x=0.6cm,y=0.6cm, scale=1.6]
\clip(0.38,1.34) rectangle (5.57,4.74);
\draw [line width=1.2pt] (2.34,3.49)-- (3,2);
\draw [line width=1.2pt,color=wqwqwq] (4.63,3.79)-- (3,2);
\draw [line width=1.2pt,color=wqwqwq] (4.54,1.48)-- (3,2);
\draw [line width=1.2pt] (0.58,2)-- (3,2);
\draw [line width=1.2pt] (0.58,2)-- (0.86,1.08);
\draw (1.57,1.98) node[anchor=north west] {$\Gamma_{\rm nc}$};
\draw (2.45,3.4) node[anchor=north west] {$\Gamma_{\rm nc}'$};
\draw (1.01,3.45)-- (0.64,2.9);
\draw (1.14,3.36)-- (0.77,2.82);
\draw (1.26,3.28)-- (0.89,2.73);
\draw [->,line width=1.2pt] (0.83,3.18) -- (1.46,2.74);
\draw [dotted,domain=0.58:5.570598516359489] plot(\x,{(--4.84-0*\x)/2.42});
\draw (2.08,2.57) node[anchor=north west] {$\phi_\bQ$};
\draw [shift={(3,2)}] plot[domain=1.99:3.14,variable=\t]({1*0.32*cos(\t r)+0*0.32*sin(\t r)},{0*0.32*cos(\t r)+1*0.32*sin(\t r)});
\draw [shift={(3,2)},color=wqwqwq]  plot[domain=1.99:3.14,variable=\t]({-0.67*0.32*cos(\t r)+-0.74*0.32*sin(\t r)},{-0.74*0.32*cos(\t r)+0.67*0.32*sin(\t r)});
\draw [shift={(3,2)},color=wqwqwq]  plot[domain=1.99:3.14,variable=\t]({-0.67*0.32*cos(\t r)+0.74*0.32*sin(\t r)},{-0.74*0.32*cos(\t r)+-0.67*0.32*sin(\t r)});
\draw (2.82,2.85) node[anchor=north west] {$\phi_\bQ$};
\draw (3.3,2.5) node[anchor=north west] {$\phi_\bQ$};
\fill [color=black] (0.58,2) circle (1.5pt);
\draw[color=black] (0.5,2.29) node {$\mathbf{P}$};
\fill [color=black] (2.34,3.49) circle (1.5pt);
\draw[color=black] (2.09,3.48) node {$\mathbf{R}$};
\fill [color=black] (3,2) circle (1.5pt);
\draw[color=black] (3.09,1.73) node {$\mathbf{Q}$};
\fill [color=wqwqwq] (4.63,3.79) circle (1.5pt);
\draw[color=wqwqwq] (4.82,4.01) node {$\mathbf{P}'$};
\fill [color=wqwqwq] (4.54,1.48) circle (1.5pt);
\draw[color=wqwqwq] (4.58,1.73) node {$\mathbf{R}'$};
\end{tikzpicture}
}
  \end{center}
  \caption{Identifying phases of diffracted-reflected waves using the method of images.}
  \label{fig:scatgen2}
\end{figure}
Here there are two image corners to consider: $\bP'$, the reflection of $\bP$ in $\Gamma_{\rm nc}'$ (corresponding to diffracted waves emanating from $\bP$ and being reflected onto $\Gamma_{\rm nc}$ by $\Gamma_{\rm nc}'$), and $\bR'$, the reflection of $\bR$, first in $\Gamma_{\rm nc}$, then in $\Gamma_{\rm nc}'$ (corresponding to diffracted waves emanating from $\bR$ and being reflected onto $\Gamma_{\rm nc}$ via first $\Gamma_{\rm nc}$ then $\Gamma_{\rm nc}'$).
In the case $m=3$ our HNA ansatz for $\pdonetext{u}{\bn}$ on $\Gamma_{\rm nc}$ would then comprise the three-term leading order behaviour mentioned above, plus the sum
\begin{align}
\label{eqn:GeneralisedAnsatz}
v_\bP \re^{\ri kr_\bP} + v_\bR \re^{\ri kr_\bR} + v_{\bP'} \re^{\ri kr_{\bP'}} + v_{\bR'} \re^{\ri kr_{\bR'}},
\end{align}
where the amplitudes $v_\bP$, $v_\bR$, $v_{\bP'}$ and $v_{\bR'}$ are to be approximated numerically.
As in the case $m=2$, we expect $v_\bP$ to have a singularity at $\bP$, and therefore propose to approximate it on a mesh geometrically graded towards $\bP$ (as per the middle mesh in Figure \ref{fig:meshes}).
We expect $v_\bR$ to be slowly-varying on $\Gamma_{\rm nc}$, and propose to approximate it by a single polynomial supported on the whole of $\Gamma_{\rm nc}$; we also expect that the same approximation strategy should work for the amplitudes $v_{\bP'}$ and $v_{\bR'}$ associated with the diffracted-reflected waves, provided that the shadow boundaries generated by the reflection processes involved do not intersect $\Gamma_{\rm nc}$. A sufficient condition to ensure that such intersection does not occur is that $\max(\phi_\bR,\phi_\bP)<\pi/2$, where the angles $\phi_\bR$ and $\phi_\bP$ are defined as in Figure \ref{fig:scatgen1}(b). When this condition fails it would be necessary to modify the approximation strategy for $v_{\bP'}$ and $v_{\bR'}$ to deal with possible rapid variation across the shadow boundaries. One approach to this could be to premultiply $v_{\bP'}$ and $v_{\bR'}$ by appropriate special functions/canonical solutions such as Fresnel integrals or generalised Fresnel integrals (cf.\ \cite[\S5.10]{BoKi:94}); another could be to approximate $v_{\bP'}$ and $v_{\bR'}$ on meshes geometrically graded towards the relevant shadow boundaries.

For $\phi_\bQ=\pi/m$, $m>3$, the above remarks generalise in a straightfoward way: to capture the diffracted-reflected fields one must add to the leading order behaviour a generalisation of the sum \rf{eqn:GeneralisedAnsatz0} consisting of $m+1$ terms, with the final term in \rf{eqn:GeneralisedAnsatz0} replaced by a sum of $m-1$ terms associated with the first $m-1$ image corners encountered when moving clockwise around $\bQ$, starting from $\bR$, in angular increments of $\phi_\bQ$. Provided that $\max(\phi_\bR,\phi_\bP)<\pi/2$, each of the associated amplitudes would be approximated by a single polynomial supported on the whole side $\Gamma_{\rm nc}$. (By symmetry, the image corners encountered when moving anti-clockwise around $\bQ$ need not be considered, since these produce waves which have the same phases on $\Gamma_{\rm nc}$ as the clockwise image corners.)

When $\phi_\bQ\in (\pi/m,\pi/(m-1))$ for some integer $m\geq 2$, the situation is a little more complicated. (In this case, we note that the method of images no longer provides an exact Green's function for scattering in a sector of angle $\phi_\bQ$; the Green's function now has a component corresponding to diffraction from the reentrant corner.)
Provided that $\max(\phi_\bR,\phi_\bP)<\pi/2$, we would, as usual, base our (known) leading order behaviour on a modified geometrical optics approximation, with shadow boundary effects dealt with using Fresnel integrals. This would again involve a sum of $m$ terms corresponding to the incident wave and its $m-1$ higher order reflections; but geometrical considerations imply that the highest order reflected wave in the geometrical optics approximation is non-zero on $\Gamma_{nc}$ over a reduced range of incidence directions compared to the other reflected waves.

To illustrate this, it is simplest to consider the case $m=2$, so that $\phi_\bQ\in(\pi/2,\pi)$. In this case we would take our leading order behaviour to be two times the normal derivative of the sum
\begin{align}
\label{eqn:GeneralisedLOB}
\begin{cases}
E(r,\theta-\alpha), & \alpha\in[\pi/2,\pi+\phi_\bQ],\\
0, & \text{otherwise}
\end{cases}
+
\begin{cases}
E(r,\theta+\alpha), & \alpha\in[\pi/2,2\pi-\phi_\bQ],\\
0, & \text{otherwise}
\end{cases}
\end{align}
where $E(r,\psi)$ is defined as in Lemma \ref{def:udDef}. Note that the term corresponding to the incident wave ($E(r,\theta-\alpha)$) is non-zero for $\alpha$ up to $\pi+\phi_\bQ$, whereas the term corresponding to the reflected wave ($E(r,\theta+\alpha)$) is non-zero only for $\alpha$ up to $2\pi-\phi_\bQ$, because the reflected rays do not strike $\Gamma_{\rm nc}$ for $\alpha \in [2\pi-\phi_\bQ,\pi+\phi_\bQ)$.

To determine the phases present in the remainder of the field, and obtain an ansatz similar to \rf{eqn:GeneralisedAnsatz0} or \rf{eqn:GeneralisedAnsatz}, one can again appeal to the method of images, as illustrated for the case $m=3$ in Figure \ref{fig:scatgen2}(b). But we need to make two changes compared to the case $\phi_\bQ=\pi/m$. First, we need only consider the first $m-2$ image corners encountered when moving clockwise around $\bQ$, starting from $\bR$, in angular increments of $\phi_\bQ$, because the $(m-1)$th image corner is no longer ``visible'' on $\Gamma_{\rm nc}$.
(So in the case $m=2$ we should remove the term $v_{\bP'} \re^{\ri kr_{\bP'}}$ from the ansatz \rf{eqn:GeneralisedAnsatz0}; in the case $m=3$ we should remove the term $v_{\bR'} \re^{\ri kr_{\bR'}}$ from \rf{eqn:GeneralisedAnsatz}). Second, we need to add a term $v_{\bQ} \re^{\ri kr_{\bQ}}$, corresponding to diffraction from the reentrant corner $\bQ$. The amplitude $v_{\bQ}$ will have a derivative singularity at $\bQ$ (in contrast to the case $\phi_\bQ=\pi/m$ when the solution is smooth at $\bQ$), and we therefore propose to approximate it on a geometric mesh graded towards $\bQ$.

To summarize, we have sketched how to modify our HNA approximation space for the numerical approximation of the solution of the Dirichlet scattering problem for polygons in the following class (which contains our original class $\cC$):
\begin{defn}[The class $\cC'$]
A polygon $\Omega\subset \R^2$ is a member of the class $\cC'$ if, relative to each corner $\bQ$ at which the exterior angle $\phi_\bQ$ is smaller than $\pi$, the following two conditions hold (where $\bP$, $\bQ$, $\bR$, $\phi_\bR$ and $\phi_\bP$ are as in Figure \ref{fig:scatgen1}(b)):
\begin{enumerate}[(i)]
\item The whole of $\overline\Omega \setminus\{\bP,\bR\}$ lies on one side of the line through $\bP$ and $\bR$;
\item $\max(\phi_\bR,\phi_\bP)<\pi/2$.
\end{enumerate}
\end{defn}
We believe that with the modifications described above, one should observe the same qualitative performance of our BEM to that for the class $\cC$ (i.e.\ exponential decay in error with increasing polynomial degree and only logarithmic growth in number of degrees of freedom to maintain accuracy as $k$ increases).
We leave experimental verification of this for future work.
At present our rigorous best approximation analysis holds only for the case $\phi_\bQ= \pi/2$. But it seems plausible that, with significant further work, our analysis could be generalised, at least to the case $\phi_\bQ=\pi/m$, where $m\geq 3$ an integer, because of the existence of a simple closed form Green's function for scattering in a sector of angle $\pi/m$ (this was a key ingredient in our analysis for the case $\phi_\bQ= \pi/2$). However, we anticipate that extending the analysis to general $\phi_\bQ$ would be considerably more challenging.

Further generalisation to polygons outside the class $\cC'$ would require more significant modifications to our HNA approximation space.
In particular, when more than three corners of the polygon are visible from one side of the polygon, the multiple scattering effects are in general considerably more complicated.
However, as remarked in \S\ref{Introduction}, algorithms developed for determining the high frequency behaviour in the case of scattering by multiple smooth convex scatterers (e.g.\ \cite{BrGeRe:05,Ec:05,EcRe:09,AnBoEcRe:10}) may be helpful as a source of ideas for how to deal with the interactions between distant parts of the scatterer which are visible to each other.  We note also the recent work \cite{GrHeLa:13} on the design of HNA approximation spaces for transmission problems, which exhibit similar multiple scattering phenomena to those encountered here.  However, we leave further discussion to future work.

\bibliographystyle{siam}      %
\bibliography{nonconvex_biblio}   %
\end{document}